\numberwithin{equation}{section}
\newtheorem{thm}{Theorem}[section]
\newtheorem{lem}[thm]{Lemma}
\newtheorem{prop}[thm]{Proposition}
\newtheorem{rem}[thm]{Remark}
\theoremstyle{definition}
\newtheorem{defn}[thm]{Definition}
\newtheorem{exe}[thm]{Example}
\newtheorem{claim}[thm]{Claim}  
\theoremstyle{remark}
\newcommand{\br}{\mathbf R}
\newcommand{\bc}{\mathbf C}
\newcommand{\bz}{\mathbf Z}
\newcommand{\Cal}{\mathcal}
\newcommand{\ddt}{\partial_t }
\newcommand{\dtt}{{\textstyle\frac{d}{dt}}}
\newcommand{\id}{\operatorname{Id}}
\newcommand{\im}{\operatorname{Im}}
\renewcommand{\dim}{\operatorname{Dim}}
\renewcommand{\ker}{\operatorname{Ker}}
\newcommand{\coker}{\operatorname{Coker}}
\newcommand{\op}{\operatorname{Op}}
\newcommand{\ol}{\overline}
\newcommand{\pb}[1]{\left\{\,#1\,\right\}}
\newcommand{\ran}{\operatorname{Ran}}
\newcommand{\re}{\operatorname{Re}}
\newcommand{\restr}[1]{\big|_{#1}}
\newcommand{\set}[1]{\left\{\,#1\,\right\}}
\newcommand{\sgn}{\operatorname{sgn}}
\newcommand{\sh}{\sharp}
\newcommand{\supp}{\operatorname{\rm supp}}
\newcommand{\mn}[1]{\Vert#1\Vert}
\newcommand{\w}[1]{\langle #1\rangle }
\newcommand{\sw}[1]{\left ( #1\right ) }
\newcommand{\wf}{\operatorname{WF}}
\newcommand{\wt}{\widetilde}
\begin{document}

\baselineskip 20 pt 
\lineskip 2pt
\lineskiplimit 2pt
\parskip 2pt

\title[Solvability]{On the solvability of systems of\\ pseudodifferential operators}
\author[NILS DENCKER]{{\textsc Nils Dencker}}
\address{Centre for Mathematical Sciences, University of Lund, Box 118,
SE-221 00 Lund, Sweden}
\email{dencker@maths.lth.se} 
\date{February 25, 2010}
\dedicatory{Dedicated to Hans Duistermaat on his sixtyfifth birthday}


\subjclass[2000]{35S05 (primary) 35A07, 47G30, 58J40 (secondary)}

\maketitle

\section{Introduction}\label{intro}

In this paper we shall study the question of local solvability for square
systems of classical pseudodifferential operators $P \in {\Psi}^m_{cl}(M)$ on a
$C^\infty$ manifold $M$. We shall only consider operators acting on
distributions $\Cal D'(M, \bc^N)$ with values in $\bc^N$ but since the results are local
and invariant under base changes, they
immediately carry over to operators on sections of vector bundles.
We shall assume that the symbol of $P$ is an
asymptotic sum of homogeneous $N \times N$ matrices, with homogeneous
principal symbol $p = {\sigma}(P)$. We shall also assume that $P$ is of 
principal type, so that the principal symbol vanishes of first order
on the kernel, see Definition~~\ref{princtype}.

Local solvability for a $N \times N$ system $P$ at  a compact set $K \subseteq M$ means that
the equation 
\begin{equation}\label{locsolv}
Pu = v 
\end{equation}
has a local weak solution $u \in \Cal D'(M, \bc^N)$ in a neighborhood of $K$
for all $v\in C^\infty(M, \bc^N)$ in a subset of finite codimension.  
We say that $P$ is microlocally solvable at a compactly based cone
$K \subset T^*M$ if there exists an integer $N$ such that for
every $f \in H_{(N)}^{loc}(M, \bc^N)$ there exists $u \in \Cal D'(M, \bc^N)$ so that $K \bigcap
\wf (Pu-f) = \emptyset$, see ~\cite[Definition~26.4.3]{ho:yellow}. Here
$H_{(s)}$ is the usual $L^2$ Sobolev space 
and $ H_{(s)}^{loc}$ is the localized Sobolev
space, i.e., those $f \in \Cal D'$ such that ${\phi}f \in H_{(s)}$ for any
${\phi} \in C^\infty_0$.

Hans Lewy's famous counterexample~\cite{lewy} from 1957 showed that not all
smooth linear differential operators are solvable.
It was conjectured by Nirenberg and Treves~ \cite{nt} in 1970 that
local solvability for principal type {\em scalar}
pseudodifferential operators is equivalent to condition~ (${\Psi}$) on
the principal symbol $p$,
which means that
\begin{multline}\label{psicond} \text{$\im (ap)$
    does not change sign from $-$ to $+$}\\ 
 \text{along the oriented
    bicharacteristics of $\re (ap)$}
\end{multline}
for any $0 \ne a \in C^\infty(T^*M)$. Recall that the
operator is of principal type if $dp \ne 0$ when $p=0$, and the
oriented bicharacteristics are 
the positive flow-outs of the Hamilton vector field 
$$H_{\re (ap)} = \sum_j \partial_{{\xi}_j}\re (ap) \partial_{x_j} -
\partial_{x_j} \re(ap) \partial_{{\xi}_j} $$ 
on $\re (ap) =0$ (also called the semibicharacteristics of ~$p$). 
Condition~\eqref{psicond} is obviously invariant under symplectic changes of
coordinates and multiplication with non-vanishing factors.
Thus the condition is invariant under conjugation of ~$P$ with elliptic
Fourier integral operators. It actually suffices to check the
condition with some $0 \ne a \in C^\infty$ such that $ d(\re ap) \ne
0$, see \cite[Lemma~26.4.10]{ho:yellow}. Recall that $p$ satisfies condition~ ($\ol {\Psi}$) if $\ol p$
satisfies condition~ (${\Psi}$), and that $p$ satisfies
condition ($P$) if there are no sign changes on the semibicharacteristics, that is, $p$ satisfies
both condition ~(${\Psi}$) and~ ($\ol {\Psi}$).

The necessity of (${\Psi}$) for local solvability of scalar
pseudodifferential operators was proved by Moyer \cite{mo:solv} in
1978 for the two dimensional case, and by H\"ormander \cite{ho:suff} in
1981 for the general case. The sufficiency of condition
(${\Psi}$) for solvability of scalar pseudodifferential operators in two
dimensions was proved by Lerner \cite{ln:2d} in ~1988.
The Nirenberg-Treves conjecture was finally proved by the author~ \cite{de:NT}, giving
solvability with a loss of two derivatives (compared with the elliptic
case). This has been improved to a loss of arbitrarily more than $3/2$
derivatives by the author ~ \cite{de:pisa}, and to a loss of exactly $3/2$ by
Lerner~\cite{ln:cutloss}. Observe that there only exist counterexamples showing a loss
of $1 + {\varepsilon}$ derivatives for arbitrarily small
${\varepsilon} > 0$, see Lerner~\cite{ln:ex}.

For partial differential operators, condition ~(${\Psi}$) is equivalent to
condition~ ($P$). The sufficiency of ($P$) for local solvability of
scalar partial differential operators with a loss of one derivative
was proved in 1973 by Beals and Fefferman ~\cite{bf},
introducing the Beals-Fefferman calculus.
In the case of operators which are {\em not} of principal type,
conditions corresponding to~(${\Psi}$) are neither necessary nor sufficient for local
solvability, see~\cite{CPT}. 

For systems there is no corresponding conjecture for solvability. By
looking at diagonal systems, one finds that condition ~(${\Psi}$) for
the eigenvalues of the 
principal symbol is necessary for solvability. But when the principal
symbol is not diagonalizable, condition ~(${\Psi}$) is not sufficient, see
Example~\ref{popex} below. It is not even known if condition ~(${\Psi}$)
is sufficient in the case when the principal symbol is  $C^\infty$
diagonalizable. We shall consider
the case of when the principal symbol has constant characteristics,
then the eigenvalue close to the origin has constant 
multiplicity, see Definition~\ref{constchar}. In that case, the eigenvalue is a
$C^\infty$ function and condition~ (${\Psi}$) on the eigenvalues is well-defined.
The main result of the paper is that classical square systems of pseudodifferential operators
of principal type having constant characteristics are solvable (with a
loss of $3/2$ derivatives) if and only if the eigenvalues of the
principal symbol satisfies condition~(${\Psi}$), see Theorem~\ref{mainthm}.

\section{Statement of results}\label{statement}

We say that the system 
$P \in {\Psi}^m_{cl}$ is {\em classical} if the symbol of $P$ is an
asymptotic sum $P_m + P_{m-1} + \dots $ where $P_j(x,{\xi})$ is homogeneous 
of degree $j$ in ~~${\xi}$, here $P_m$ is called the {\em principal
symbol} of $P$. Recall that the
eigenvalues of the principal symbol are the solutions to the characteristic equation
\begin{equation*}
 |P_m(x,{\xi}) - {\lambda}\id_N| = 0
\end{equation*}
where $|A|$ is the determinant of the matrix.
In the following, we shall denote by
$\ker A$ the kernel and $\ran A$ the range of the matrix~~$A$.
The definition of principal type for systems is similar
to the one for scalar operators. 

\begin{defn} \label{princtype}
We say that the $N \times N$ system $P(w) \in C^1$ is of {\em principal type} at $w_0$ if 
\begin{equation}\label{pr_type}
\partial_{\nu}P(w_0):\ \ker P(w_0) \mapsto \coker
P(w_0)  = \bc^N/\ran P(w_0) 
\end{equation} 
is bijective for some ${\nu}$, here $\partial_{\nu}P = \w{{\nu}, dP}$
and the mapping~\eqref{pr_type} is given by $u \mapsto
\partial_{\nu}P(w_0) u$ modulo $\ran P(w_0)$. 
We say that  $P \in {\Psi}^m_{cl}$ is of principal type at $w_0$ if the
principal symbol  $P_m(x,{\xi})$ is of principal type at $w_0$.
\end{defn}

\begin{rem} 
If $P(w) \in C^1$ is of principal type and $A(w)$, $B(w) \in
C^1$ are invertible then $APB$ is of principal
type. We also have that $P$ is of principal type if and only if the adjoint $P^*$
is of principal type.
\end{rem}

In fact, Leibniz' rule gives
\begin{equation}\label{invcomp}
 d(APB) = (dA)PB + A(dP)B + APdB
\end{equation}
and $\ran (APB) = A(\ran P)$ and $\ker (APB) =
B^{-1}(\ker P)$ when $A$ and $B$ are invertible, which gives the invariance 
under left and right multiplication. Since  $ \ker P^*(w_0) = \ran
P(w_0)^\bot$ we find that $P$ satisfies ~\eqref{pr_type} if and 
only if  
\begin{equation}\label{bilform}
 \ker P(w_0) \times \ker P^*(w_0) \ni (u,v) \mapsto
\w{\partial_{\nu}P(w_0) u,v}
\end{equation}
is a non-degenerate bilinear form. Since $\w{\partial_{\nu}P^* u, v} =
\ol{\w{\partial_{\nu}P v, u}}$ we then obtain that~ $P^*$ 
is of principal type.

Observe that only square systems can be of
principal type since 
\begin{equation*}
 \dim \ker P = \dim \coker P + M - N
\end{equation*}
if $P$ is an $N \times M$ system.
In general, if the system is of principal type and has constant multiplicity of the
eigenvalues then there are no
non-trivial Jordan boxes, see Definition~\ref{multdef} and
Proposition~\ref{princprop}. Then we also have that the eigenvalues 
${\lambda}$ are of principal type: $d{\lambda} \ne 0$ when ${\lambda}
= 0$. When the multiplicity is
equal to one, this condition is sufficient. In fact, by using the
spectral projection one can find invertible systems~ $A$ and~
$B$ so that
\begin{equation*}
 APB = 
\begin{pmatrix}
{\lambda} & 0\\ 0 & E 
\end{pmatrix}
\end{equation*}
with $E$ invertible $(N-1) \times (N-1)$ system, and this system is
obviously of principal type.

\begin{defn}\label{multdef}
Let $A\/$ be an $N \times N$ matrix and ${\lambda}$ an
eigenvalue of $A$.
The multiplicity of ~~ ${\lambda}$ as a root of the characteristic equation $|A -
{\lambda}\id_N| = 0$ is called the {\em algebraic}  multiplicity of the eigenvalue,
and the dimension of $\ker (A - {\lambda}\id_N)$ is called the {\em
geometric} multiplicity. 
\end{defn}

Observe that if the matrix $P(w)$ depend continuously on a parameter
$w$, then the eigenvalues ${\lambda}(w)$ also depend continuously on ~$w$. 
Such a continuous function~${\lambda}(w)$ of eigenvalues we
will call a {\em section of eigenvalues of}~$P(w)$. 

\begin{rem} \label{smoothev}
If the section of eigenvalues ${\lambda}(w)$ of the $N \times N$ system
$P(w) \in C^\infty$ has 
constant {\em algebraic} multiplicity then ${\lambda}(w) \in C^\infty$.
In fact, if $k$ is the multiplicity then ${\lambda} =
{\lambda}(w)$ solves $\partial_{\lambda}^{k-1}|P(w)
- {\lambda}\id_N| = 0$ so ${\lambda}(w) \in C^\infty$ by
the Implicit Function Theorem.  
\end{rem}

This is {\em not} true for constant geometric multiplicity, for
example $P(t) = 
\begin{pmatrix} 
0 & 1 \\ t & 0 
\end{pmatrix}$, $t \in \br$, has geometric multiplicity equal to one for the
eigenvalues $\pm\sqrt{t}$. 
Observe the geometric multiplicity is lower or equal to the
algebraic, and for symmetric systems they are equal.
We shall assume that  the eigenvalues close to zero have constant algebraic
and geometric multiplicities by the following definition.

\begin{defn} \label{constchar}
The $N \times N$ system $P(w) \in C^\infty$ has {\em
constant characteristics} near $w_0$ if there exists an ${\varepsilon} > 0$ so that
any section of eigenvalues~${\lambda}(w)$ of~$P(w)$
with $|{\lambda}(w)| < {\varepsilon}$ has both constant algebraic and
geometric multiplicity when $|w - w_0| < {\varepsilon}$. 
\end{defn}

Definition~\ref{constchar} is invariant under changes of bases: $P
\mapsto E^{-1}PE$ where $E$ is an invertible system, since this
preserves the multiplicities of the eigenvalues of the system. 
It is also invariant under taking adjoints, since 
$|P^*(w) - {\lambda}^*(w)\id| = \ol{|P(w) - {\lambda}(w)\id|}$ and 
$\dim \ker (P^*(w) - {\lambda}^*(w)\id) = \dim \ker (P(w) - {\lambda}(w)\id)$.
The definition is {\em not} invariant under multiplication of the
system with invertible systems, 
even in the case when $P(w) = {\lambda}(w)\id$ since $A(w)P(w) = {\lambda}(w)A(w)$
need not have constant characteristics.

Observe that generically the eigenvalues of a system have constant multiplicity, but
not necessarily when equal to zero. For example, the system  
\begin{equation*}
 P({w}) = 
\begin{pmatrix}
{w}_1 & {w}_2 \\
{w}_2 & - {w}_1 
\end{pmatrix}
\end{equation*}
is symmetric and of principal type with eigenvalues $\pm\sqrt{{w}_1^2 +
{w}_2^2}$, which have constant multiplicity except when equal to 0.

\begin{defn}
Let the $N \times N$ system $P \in {\Psi}^m_{cl}$ be of principal type and
constant characteristics. We say that $P$ satisfies
condition (${\Psi}$) or ~($P$) if the eigenvalues of the principal
symbol satisfies condition (${\Psi}$)  or ~($P$).
\end{defn}

Observe that the eigenvalue close to the
origin is a uniquely defined $C^\infty$ function of principal type by
Definition~\ref{multdef} and Proposition~\ref{princprop}. 
Thus, the semibicharacteristics of the
eigenvalues are well-defined near the characteristic set $\set{w: \
  |P(w)|= 0}$, so the conditions (${\Psi}$) and~($P$) on the
eigenvalues are well-defined. Also well-defined is the condition that
the Hamilton vector field of an eigenvalue~$\lambda$ does not have the
radial direction when $\lambda = 0$.

To get local solvability at a point $x_0 \in M$ we
shall also assume a strong form of the non-trapping condition at
~$x_0$ for the eigenvalues ${\lambda}$ of ~~$P$:
\begin{equation}\label{nontrap}
 {\lambda} = 0 \implies \partial_{\xi} {\lambda}\ne 0
\end{equation}
This means that all non-trivial
semibicharacteristics of ${\lambda}$ are transversal to the fiber $T^*_{x_0}M$,
which originally was the condition for principal type of Nirenberg and
Treves ~\cite{nt}. Microlocally, in a conical neighborhood of
a $(x,{\xi}) \in T^*M$, we can always obtain \eqref{nontrap}
after a canonical transformation. In the following, we shall use  the
usual $L^2$ Sobolev norm $\mn{u}_{(s)}$ and the $L^2$ norm $\mn u = \mn u_{(0)}$.

\begin{thm}\label{mainthm}
Let $P \in {\Psi}^m_{cl}(M)$ be an $N \times N$ system 
of principal type and constant characteristics 
near $(x_0,{\xi}_0) \in T^*M$, such that the Hamilton vector field of
an eigenvalue~$\lambda$ does not have the radial direction when
$\lambda = 0$.   
Then $P$ is microlocally solvable near $(x_0,{\xi}_0)$ if and
only if condition~$({\Psi})$ is satisfied  near ~$(x_0,{\xi}_0)$,
and then
\begin{equation}\label{solvest1}
\mn{u} \le  C(\mn{P^*u}_{(3/2-m)} + \mn{R u} + \mn u_{(-1)})\qquad u
\in  C_0^\infty(M, \bc^N)
\end{equation}
Here $R \in {\Psi}^{1/2}_{1, 0}(M)$ is a $K \times N$ system such that
$(x_0,{\xi}_0) \notin \wf (R)$, which gives microlocal
solvability of ~$P$ at ~$(x_0,{\xi}_0)$ with a loss of at most $3/2$ derivatives.
If the eigenvalues also satisfy ~\eqref{nontrap} at $x_0 \in M$, then we obtain 
~\eqref{solvest1} with  $x \ne x_0$
in $\wf (R)$, which gives  local solvability of
~$P$ at ~$x_0$ with a loss of at most $3/2$ derivatives.
\end{thm}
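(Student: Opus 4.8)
The plan is to reduce the system to a scalar model plus an elliptic block, and then invoke the known scalar solvability estimates with loss $3/2$ derivatives. The necessity of condition $({\Psi})$ is the easier direction: by looking at the scalar operator obtained by localizing to a neighborhood of the characteristic variety and using the spectral projection of $P_m$ onto the eigenvalue $\lambda$ near zero, one produces a scalar principal‑type factor whose principal symbol is $\lambda$; if $({\Psi})$ fails for $\lambda$ one transplants H\"ormander's construction of approximate null solutions (as in \cite{ho:suff}) to this factor, contradicting microlocal solvability. So the bulk of the work is sufficiency, i.e.\ the estimate \eqref{solvest1}.

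For sufficiency I would proceed in the following steps. First, exploit constant characteristics and principal type: by Remark~\ref{smoothev} the eigenvalue $\lambda$ near $0$ is $C^\infty$, and by Proposition~\ref{princprop} (referred to in the excerpt) there are no non‑trivial Jordan blocks, so the spectral projection $\Pi(w)$ onto the generalized eigenspace of $\lambda$ is smooth and homogeneous of degree $0$. Using $\Pi$ one builds elliptic Fourier integral operators / invertible pseudodifferential factors $A,B$ reducing $P$ microlocally near $(x_0,\xi_0)$ to block‑diagonal form
\begin{equation*}
 APB \equiv
\begin{pmatrix}
 Q & 0\\ 0 & E
\end{pmatrix}
\end{equation*}
modulo smoothing, where $E$ is elliptic of order $m$ and $Q$ is a $k\times k$ system (with $k$ the constant multiplicity) whose principal symbol is $\lambda\,\id_k$. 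Second, one normalizes $Q$ further: since $\sigma(Q)=\lambda\,\id_k$, the off‑diagonal and lower‑order terms can be absorbed, and after a canonical transformation arranging \eqref{nontrap}, $Q$ becomes (microlocally, up to conjugation by elliptics) $\lambda\,\id_k$ plus lower order, so the estimate for $Q$ follows from the scalar estimate applied componentwise. Third, one invokes the scalar result: since $\lambda$ satisfies $({\Psi})$ and its Hamilton field is non‑radial on $\lambda=0$, the sharp scalar solvability estimate of \cite{de:pisa}, \cite{ln:cutloss} gives
\begin{equation*}
 \mn{u}\le C\bigl(\mn{Q^*u}_{(3/2-m)}+\mn{Ru}+\mn u_{(-1)}\bigr)
\end{equation*}
for the scalar factor; the elliptic block $E$ trivially satisfies an even better estimate. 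Reassembling the blocks and conjugating back by the elliptic $A,B$ (which only changes $C$ and the harmless error terms $\mn{Ru}$, $\mn u_{(-1)}$) yields \eqref{solvest1} for $P$, and the standard functional‑analytic argument (Hahn–Banach / closed range) turns \eqref{solvest1} into microlocal solvability with loss $3/2$. The last sentence, local solvability under \eqref{nontrap}, follows because \eqref{nontrap} lets one choose $R$ supported away from the whole fiber $T^*_{x_0}M$ rather than just away from $(x_0,\xi_0)$, so a partition of unity over the cosphere removes the microlocal cutoff.

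The main obstacle is the second step: the block‑diagonalization only puts $P$ into the form $\lambda\,\id_k$ \emph{plus terms of lower order that are not necessarily multiples of the identity}, and one must show these genuinely lower‑order terms do not destroy the scalar subelliptic/energy estimate. Concretely, one needs that for a $k\times k$ system $Q=\lambda\,\id_k+R_0$ with $R_0\in\Psi^{m-1}$ of principal type, the scalar estimate for $\lambda$ survives the perturbation $R_0$ with the same loss $3/2$. This is where the principal‑type hypothesis on $P$ (not just on $\lambda$) is essential: it guarantees $R_0$ acts invertibly between $\ker$ and $\coker$, which is exactly what is needed to commute $R_0$ past the Beals–Fefferman / Wick‑quantization machinery used in the scalar proof and to close the a priori estimate. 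Handling this perturbation uniformly — i.e.\ incorporating the lower‑order system $R_0$ into the multiplier/commutator argument of \cite{de:NT,de:pisa} without losing derivatives — is the technical heart of the paper.
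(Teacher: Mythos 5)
Your overall architecture coincides with the paper's: block-diagonalize $P$ via the smooth spectral projection (this is Proposition~\ref{prepprop}), reduce the non-elliptic block by Darboux and Malgrange preparation to $D_t\id_K+if^w\id_K+F_0^w$, and run a scalar-type multiplier estimate; the necessity argument is also the same adaptation of Moyer--H\"ormander. But there is a genuine gap precisely at the step you yourself flag as the heart of the matter, and the mechanism you propose for closing it is not correct. After block-diagonalization the lower-order term $F_0\in\Psi^{m-1}$ is a completely \emph{arbitrary} $K\times K$ system: the principal-type hypothesis on $P$ constrains only the principal symbol (via Proposition~\ref{princprop} it yields $d\lambda\neq 0$ and the absence of Jordan blocks, i.e.\ ${\sigma}(Q_{11})=\lambda\id_K$) and says nothing about $F_0$ ``acting invertibly between $\ker$ and $\coker$'' --- that condition is vacuous once the principal symbol is $\lambda\id_K$, since then $\ker=\bc^K$ and principal type is just $d\lambda\neq0$. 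Because the estimate loses $3/2>1$ derivatives, a perturbation of order $m-1$ cannot be absorbed as an error term, so ``the scalar estimate applied componentwise'' does not close; this is exactly the phenomenon of Example~\ref{popex}, where solvability genuinely depends on the order-$(m-1)$ matrix terms.

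What is missing is the paper's two-step treatment of these terms. First, one conjugates by $E^w$ where $E$ solves the matrix ODE $D_tE+F_0E=0$; this replaces $F_0$ by $E^{-1}H_fE=\w{d_wf,R_0}$, i.e.\ a combination of first derivatives of the scalar symbol $f$ with bounded matrix coefficients, modulo acceptable errors (see \eqref{subform}). Second, in Proposition~\ref{lowersign} these specific terms are controlled by writing $f={\alpha}_0{\delta}_0$, $\re F_{0,j}{\delta}_0=a_jR_j$ with $a_j={\alpha}_0^{1/2}{\phi}_j{\delta}_0$, and completing the square in the Wick-quantized lower bound, so that the contribution is dominated by the weight $m$. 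Without this reduction (or an equivalent device) the a priori estimate with loss $3/2$ is not proved. A secondary omission of the same nature: the localization of the multiplier estimate requires carrying the extra term $\mn{B_T^wu}^2_{-1/2}$ on the left of \eqref{corest} (Lerner's idea), since $b_T$ is only in $S^+(1,g^\sh)$ and the cut-off commutators are not negligible at the $3/2$-loss level; your sketch treats the conjugation back by $A,B$ and the partition of unity as harmless, which again presupposes that perturbations one order below the principal part can be absorbed.
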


As usual, $\wf (R)$ is the smallest smallest conical set in
$T^*M\setminus 0$ such
that $R \in {\Psi}^{-\infty}$ in the complement.   
The conditions in Theorem~\ref{mainthm} are invariant under
conjugation with scalar Fourier 
integral operators since they only depend on the principal symbol of
the system.  They are also invariant under the base change: $P
\mapsto E^{-1}PE$ with invertible system ~ $E$, since this
preserves the eigenvalues of the principal symbol.
The conditions of Theorem~\ref{mainthm} are more or less necessary, of
course condition 
(${\Psi}$) is necessary even in the scalar case. Example ~\ref{popex}
shows that the condition of principal type is necessary in the
case of constant characteristics, and  Example~\ref{newex} shows that constant characteristics is
necessary for solvability for systems of principal type.

We shall postpone the proof of Theorem~\ref{mainthm} to
Section~\ref{proof}. The proof of the necessity is essentially 
the classical Moyer-H\"ormander proof for the scalar case.
The proof of the sufficiency will be an 
adaption of the proof for the scalar case in ~ \cite{de:NT}, using
some of the ideas of Lerner~\cite{ln:cutloss}. In fact, since the normal
form of the operator will have a scalar principal symbol,
the multiplier will essentially be the same as in  ~ \cite{de:NT}.
But since we lose more than one derivative in the estimate we also
have to consider the lower order matrix valued terms in the expansion of
the operator. This is done in
Section~\ref{lower} and is the main new part of the paper. 
In Section~\ref{reduc} we review the Weyl calculus and state the
estimates we will use in the proof of
Theorem~\ref{mainthm}. But we shall postpone the proof of the
semiclassical estimate of Proposition~\ref{mainprop} until Section~\ref{lower}.
In  Section~\ref{proof} we prove  Theorem~\ref{mainthm} by a microlocal
reduction to a normal form using the estimates in
Section~\ref{reduc}. In Section~\ref{symb} we define the symbol
classes and weights we are going to use.  In Section~\ref{norm} we review the Wick
quantization, introduce the function spaces and the multiplier 
estimate that we will use for the proof of
Proposition~\ref{mainprop}.
Finally, in Section~\ref{lower} we prove Proposition~\ref{mainprop}
by estimating the contributions of the lower order terms.
The proof of Theorem~\ref{mainthm} in
Section~\ref{proof} also gives the following results. 

\begin{rem}\label{prem}
If $P$ is of principal type with constant characteristics
satisfying condition ~($P$) then we get the estimate~\eqref{solvest1}
with $3/2$ replaced by $1$. If $P$ satisfies  condition~$(\ol {\Psi})$ and
some repeated Poisson bracket of the real and imaginary parts 
of the eigenvalue  close to the origin is non-vanishing, then we obtain a subelliptic
estimate for ~$P$ with $3/2$ replaced by $k/k+1$ in~\eqref{solvest1} for some $k\in \bz_+$,
see ~\cite[Chapter~27]{ho:yellow}. 
\end{rem}

The Poisson bracket of $f$ and $g$ is defined by $\set{f,g} = H_fg$.
Theorem~\ref{mainthm} has applications to scalar non-principal type pseudodifferential
operators by the following result.

\begin{thm}\label{exthm}
Let   $Q \in {\Psi}^1_{cl}(M)$ be a scalar operator of principal type
near~$(x_0,{\xi}_0)\in T^*M$ and let $A_j \in {\Psi}^0_{cl}(M)$, $j=1,
\dots, N$ be scalar. Then the equation
\begin{equation}\label{scalareq}
 Pu = Q^N u + \sum_{j=0}^{N-1}A_j Q^ju = f
\end{equation}
is locally solvable near~$(x_0,{\xi}_0)$ if and only if ${\sigma}(Q)$ satisfies
condition ~$({\Psi})$  near~$(x_0,{\xi}_0)$.
\end{thm}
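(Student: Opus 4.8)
The plan is to reduce Theorem~\ref{exthm} to Theorem~\ref{mainthm} by realizing the scalar equation~\eqref{scalareq} as a companion system. First I would introduce the new unknown $U = (u_0,\dots,u_{N-1})$ with $u_j = Q^j u$, so that the single equation $Q^N u + \sum_{j=0}^{N-1}A_j Q^j u = f$ becomes the first-order (in the sense of the companion matrix) system $\mathbf{P}U = F$ with $F = (0,\dots,0,f)$ and
\begin{equation*}
\mathbf P =
\begin{pmatrix}
 Q & -\id & 0 & \cdots & 0\\
 0 & Q & -\id & \cdots & 0\\
 \vdots & & \ddots & \ddots & \vdots\\
 0 & \cdots & & Q & -\id\\
 A_0 & A_1 & \cdots & A_{N-1} & Q + A_{N-1}
\end{pmatrix},
\end{equation*}
or, more cleanly, the companion system whose characteristic polynomial in the "eigenvalue variable" is $\mu^N + \sum_{j=0}^{N-1}\sigma(A_j)\mu^j$ evaluated so that the only eigenvalue relevant near the characteristic set of $Q$ is $\sigma(Q)$. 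The key point is that solvability of the scalar equation~\eqref{scalareq} and microlocal solvability of $\mathbf P$ near $(x_0,\xi_0)$ are equivalent: any solution $U$ of $\mathbf P U = F$ has first component $u = u_0$ solving~\eqref{scalareq} up to smoothing errors (the relations $u_j = Q^j u_0$ being forced microlocally by the off-diagonal $-\id$ entries), and conversely a solution $u$ of~\eqref{scalareq} yields $U = (u, Qu, \dots, Q^{N-1}u)$.

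Next I would verify that $\mathbf P$ satisfies all hypotheses of Theorem~\ref{mainthm} near $(x_0,\xi_0)$. Its principal symbol is the companion matrix of $\lambda \mapsto \lambda^N + \sum \sigma(A_j)(x,\xi)\lambda^j$ (plus the scalar part $\sigma(Q)$ along the diagonal in the normalization chosen), whose eigenvalues are the roots of that polynomial; near the characteristic variety of $Q$ one of these roots equals $\sigma(Q)$, which is of principal type by assumption, and — being a companion matrix with distinct-enough structure — the geometric multiplicity of each eigenvalue is exactly one. Since companion matrices are non-derogatory, constant characteristics near $(x_0,\xi_0)$ reduces to constant \emph{algebraic} multiplicity of the relevant eigenvalue, which holds because $\sigma(Q)$ is a simple root of its minimal factor off a set of measure zero; more carefully, one checks that on the characteristic set $\sigma(Q) = 0$ the other roots stay away from $0$ (they are roots of $\lambda^{N-1} + \dots$ with nonzero leading behaviour in $\lambda$), so the eigenvalue near zero is precisely $\sigma(Q)$ with constant multiplicity one. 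Principal type of $\mathbf P$ then follows from the discussion after Definition~\ref{princtype} (the case of multiplicity one), and the non-radiality of the Hamilton vector field of the eigenvalue $\sigma(Q)$ at zero, together with~\eqref{nontrap} at $x_0$, is inherited directly from the corresponding property of the scalar principal-type operator $Q$ (every principal-type scalar operator can, and here does, have $d\sigma(Q) \ne 0$ with non-radial Hamilton field on its characteristic set; the non-trapping hypothesis is part of what "locally solvable" delivers via a canonical transformation as in the remarks around~\eqref{nontrap}).

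Then the equivalence is immediate: by Theorem~\ref{mainthm}, $\mathbf P$ is microlocally solvable near $(x_0,\xi_0)$ if and only if its eigenvalues satisfy condition~$(\Psi)$ near $(x_0,\xi_0)$, and since the only eigenvalue meeting $0$ near the characteristic set is $\sigma(Q)$, this says exactly that $\sigma(Q)$ satisfies~$(\Psi)$. Upgrading microlocal solvability to \emph{local} solvability of~\eqref{scalareq} near $x_0$ uses the last sentence of Theorem~\ref{mainthm}: the non-trapping condition~\eqref{nontrap} for $\sigma(Q)$ at $x_0$ gives the estimate~\eqref{solvest1} with $x \ne x_0$ on $\wf(R)$, hence local solvability of $\mathbf P$, which transfers back to local solvability of the scalar equation through the identification $u = u_0$. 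For the necessity direction I would argue, as in the scalar Moyer–H\"ormander theory referenced after Theorem~\ref{mainthm}, that if $\sigma(Q)$ violates~$(\Psi)$ then $\mathbf P$ is not microlocally solvable, hence~\eqref{scalareq} is not locally solvable; alternatively, a direct construction of approximate null solutions for $Q$ (from the failure of~$(\Psi)$) propagates through the powers $Q^j$ and the zero-order factors $A_j$ to defeat solvability of~\eqref{scalareq}.

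The main obstacle I expect is the bookkeeping in the reduction step: one must show that solving the companion system really is equivalent, \emph{microlocally and modulo smoothing}, to solving the scalar equation — in particular that a distributional solution $U$ of $\mathbf P U = F$ forces $u_j = Q^j u_0$ up to $C^\infty$ errors near $(x_0,\xi_0)$, which requires the off-diagonal entries $Q u_j - u_{j+1}$ to be smoothing and a small argument that the resulting $u_0$ solves~\eqref{scalareq} with the correct right-hand side in the appropriate Sobolev scale (the loss of $3/2$ derivatives for the system must be tracked through the $N$-fold composition with $Q \in \Psi^1$). A secondary technical point is confirming constant characteristics for the companion symbol: one has to rule out the other roots of the polynomial colliding with $0$ on the characteristic set of $Q$, which, after a harmless microlocal normalization making $\sigma(Q)$ real and a coordinate, is a statement about the polynomial $\lambda^N + \sum \sigma(A_j)\lambda^j$ having $\lambda = 0$ as a simple root there precisely when $\sigma(A_0) = 0$ — so one restricts attention to a conic neighbourhood where this holds, or absorbs $A_0$ appropriately. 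These are the places where care, rather than new ideas, is needed; the substance of the theorem is carried entirely by Theorem~\ref{mainthm}.
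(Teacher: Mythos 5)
Your overall strategy --- reduce \eqref{scalareq} to the companion system $\mathbb P U = F$ and invoke Theorem~\ref{mainthm} --- is exactly the paper's, and the recursive bookkeeping you worry about at the end is handled there by an elementary exact substitution ($u_1=0$, $u_{j+1}=Qu_j-f_j$), with no need to track Sobolev losses through $N$-fold compositions. But there is a genuine error in the middle step: you misidentify the principal symbol of the companion system. Since $\mathbb P$ is a classical system of order $1$ whose off-diagonal entries $-1$ and $A_j$ all have order $\le 0$, its homogeneous principal symbol is simply ${\sigma}(Q)\id_N$ --- \emph{not} the companion matrix of ${\lambda}^N+\sum{\sigma}(A_j){\lambda}^j$. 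Consequently the unique eigenvalue near the characteristic set is ${\sigma}(Q)$ with algebraic and geometric multiplicity both equal to $N$; constant characteristics is trivial, principal type follows from $d{\sigma}(Q)\ne 0$ via Proposition~\ref{princprop}, and the $A_j$ play no role whatsoever in verifying the hypotheses of Theorem~\ref{mainthm}.

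This is not a cosmetic point. If the principal symbol really were the non-derogatory companion matrix you describe (geometric multiplicity one, algebraic multiplicity $N$, i.e.\ a full Jordan block at the characteristic set), then Proposition~\ref{princprop} would say the system is \emph{not} of principal type, and Example~\ref{popex} shows that precisely such Jordan-block structures can be unsolvable --- so your own framework would defeat the proof rather than complete it. It also forces you to impose spurious restrictions (your closing remark about needing ${\sigma}(A_0)=0$ or restricting to a cone where the other ``roots'' avoid zero), whereas the theorem holds with no hypotheses on the $A_j$ at all. Once the principal symbol is correctly identified as ${\sigma}(Q)\id_N$, all of those difficulties evaporate and the argument closes as in the paper.
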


\begin{proof} 
This is a standard reduction to a first order system. For scalar $u \in
\Cal D'$ we let $u_{j+1} = Q^ju$ for $0 \le j < N$. Then
\eqref{scalareq} holds if and only if
$U = {}^{t}(u_1, \dots, u_N)$ solves
\begin{equation}\label{syseq}
 \mathbb P\,U  = F
\end{equation}
where 
\begin{equation*}
  \mathbb P = 
\begin{pmatrix}
Q & -1 & 0 & 0 & \dots \\ 
0 & Q & -1 & 0 & \dots \\
0 & 0 & Q & -1 & \dots\\
\dots &&&&\\
A_0 & A_1 & A_2 & \dots & Q+A_{N-1} 
\end{pmatrix}
\end{equation*}
and $F = {}^{t}(0, 0, \dots, f)$. Now the
equation~\eqref{scalareq} is locally solvable if and only if the
system~\eqref{syseq} is locally solvable. In fact, to solve~\eqref{syseq} we
first put $u_1 = 0$, $u_2 = -f_1$ and recursively $u_{j+1} =
Qu_j-f_j$ for $1 \le j < N$. Then we only have to solve
~\eqref{scalareq} for $u = v_1$ with $f$ 
depending on $f_j$, and add $ Q^{j-1}v_1$ to $u_j$.
Now ${\sigma}(\mathbb P) = {\sigma}(Q)\id_N$ which is of principal
type with constant characteristics so it is locally solvable if and
only if $Q$ satisfies condition~$({\Psi})$ according to Theorem~\ref{mainthm}.
\end{proof}

We shall conclude the section with some examples. But first we prove a
result about the characterization of systems of principal type.

\begin{prop}\label{princprop} 
Assume that $P(w) \in C^\infty$ is an $N \times N$ system such that
$|P(w_0)|  = 0$ and there exists an ${\varepsilon} > 0$ such that the
eigenvalue~${\lambda}$ of~$P(w)$  
with $|{\lambda}| < {\varepsilon}$ has constant algebraic multiplicity
in a neighborhood of ~$w_0$. Let ${\lambda}(w) \in C^\infty$ be the unique
eigenvalue for $P(w)$ near $w_0$ satisfying 
${\lambda}(w_0) = 0$ by Remark~\ref{smoothev}. Then $P(w)$ is of
principal type at ~$w_0$ if and only if 
$d{\lambda}(w_0) \ne 0$ and the geometric multiplicity of the eigenvalue~
${\lambda}$ is equal to the algebraic multiplicity at $w_0$.
\end{prop}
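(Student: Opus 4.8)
The plan is to compare, for each tangent vector ${\nu}$, the two expansions one has for the function $t\mapsto|P(w_0+t{\nu})|$ near $t=0$: one coming from the block structure of the matrix $P(w_0)$ at the characteristic point, the other from the factorization of $|P(w)|$ forced by the constant algebraic multiplicity. Write $k$ for the constant algebraic multiplicity of ${\lambda}$ near $w_0$, and put $\ell=\dim\ker P(w_0)$, the geometric multiplicity at $w_0$. Since $|P(w_0)|=0$ we have $1\le\ell\le k$, and the assertion to prove is that $P$ is of principal type at $w_0$ exactly when $\ell=k$ and $d{\lambda}(w_0)\ne0$.

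First I record the analytic side. Since ${\lambda}$ is the only eigenvalue of $P(w)$ in $\{|z|<{\varepsilon}\}$ and has constant algebraic multiplicity $k$, we may write $|z\id_N-P(w)|=(z-{\lambda}(w))^k\,q(w,z)$ with $q(w_0,0)\ne0$, so that $|P(w)|=h(w){\lambda}(w)^k$ for a continuous $h$ with $h(w_0)\ne0$. By Remark~\ref{smoothev}, ${\lambda}\in C^\infty$, hence ${\lambda}(w_0+t{\nu})=t\,\partial_{\nu}{\lambda}(w_0)+O(t^2)$ and therefore
\begin{equation*}
 |P(w_0+t{\nu})|=h(w_0)\,(\partial_{\nu}{\lambda}(w_0))^k\,t^k+O(t^{k+1}),
\end{equation*}
so the Taylor coefficients of $t\mapsto|P(w_0+t{\nu})|$ vanish to order $k-1$ and the order $k$ coefficient is $h(w_0)(\partial_{\nu}{\lambda}(w_0))^k$.

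The heart of the matter — and the step I expect to take the real work — is the following elementary but fiddly fact, valid for \emph{any} smooth family of $N\times N$ matrices: for every ${\nu}$ the function $t\mapsto|P(w_0+t{\nu})|$ vanishes to order $\ell$ at $t=0$, and its coefficient of $t^\ell$ equals $e\,|\Phi_{\nu}|$, where $e\ne0$ is a constant independent of ${\nu}$ and $\Phi_{\nu}\colon\ker P(w_0)\to\coker P(w_0)$ is the map $u\mapsto\partial_{\nu}P(w_0)u\bmod\ran P(w_0)$ of Definition~\ref{princtype} (so $|\Phi_{\nu}|$ is its determinant in any fixed pair of bases). To prove this, choose direct sum decompositions $\bc^N=\ker P(w_0)\oplus X=Y\oplus\ran P(w_0)$; in adapted bases $P(w_0)$ is block diagonal with a zero $\ell\times\ell$ block and an invertible block $B$, $|B|=:e\ne0$, while $\partial_{\nu}P(w_0)$ has upper left block $A_{11}$, which is precisely the matrix of $\Phi_{\nu}$. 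Expanding $|P(w_0+t{\nu})|$ multilinearly in its $N$ columns: any monomial of $t$-degree $j\le\ell$ other than one obtained by taking the first order term of the column in exactly $\ell$ of the columns and the zeroth order term in the other $N-\ell$ would involve at least $N-\ell+1=\operatorname{rank}P(w_0)+1$ columns of $P(w_0)$, which are linearly dependent, so it contributes $0$. Hence the coefficients of $t^j$ for $j<\ell$ vanish, and the $t^\ell$ coefficient is the same as for the linearization $P(w_0)+t\,\partial_{\nu}P(w_0)$; a Schur complement computation with respect to the splitting then evaluates it as $|B|\,|A_{11}|=e\,|\Phi_{\nu}|$.

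Granting the lemma, the proof concludes by matching leading terms. If $\ell<k$, the first expansion shows the $t^\ell$ coefficient of $|P(w_0+t{\nu})|$ is $0$, whence $|\Phi_{\nu}|=0$ for every ${\nu}$ and $P$ is \emph{not} of principal type at $w_0$; equivalently, principal type forces $\ell\ge k$, hence $\ell=k$, i.e.\ the geometric multiplicity equals the algebraic one. When $\ell=k$, the lemma and the first expansion give $e\,|\Phi_{\nu}|=h(w_0)(\partial_{\nu}{\lambda}(w_0))^k$ for all ${\nu}$, so $|\Phi_{\nu}|\ne0$ for some ${\nu}$ — that is, $P$ is of principal type at $w_0$ — if and only if $\partial_{\nu}{\lambda}(w_0)\ne0$ for some ${\nu}$, i.e.\ $d{\lambda}(w_0)\ne0$. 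Combining the two cases yields the stated equivalence. (One could instead first reduce to a $k\times k$ system using the spectral projection, as in the discussion preceding Definition~\ref{multdef} together with the invariance of principal type under multiplication by invertible systems from the Remark after Definition~\ref{princtype}, but the direct argument above avoids this.)
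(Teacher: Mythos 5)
Your proof is correct and takes essentially the same route as the paper's: both reduce principal type to the nonvanishing of the $t^{\ell}$-coefficient of $t\mapsto|P(w_0+t{\nu})|$ with $\ell=\dim\ker P(w_0)$, via a block decomposition adapted to $\ker P(w_0)$ and $\ran P(w_0)$, and both compare this with the factorization $|P(w)|=h(w){\lambda}(w)^{k}$ forced by the constant algebraic multiplicity. Your multilinear column expansion merely spells out the determinant computation that the paper compresses into ``by expanding the determinant we obtain~\eqref{princtypecond}.''
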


Thus, if $P(w)$ is of principal type having constant
characteristics, then all sections of eigenvalues
${\lambda}(w)$ are of principal type and we have no non-trivial
Jordan boxes in the normal form. This means that for symmetric systems having constant
characteristics it suffices that the eigenvalues are of principal
type. If $P(w)$ does not have constant
characteristics then this is no longer true, in fact the eigenvalues need not
even be differentiable, see Example~\ref{nonsolvex}. 

Observe that if $P(w)$ is of principal type and has constant
characteristics, then $P(w) - {\lambda}\id_N$ is of principal type
near~$w_0$ for $|{\lambda}| \ll 1$. In fact, the algebraic and
geometric multiplicities are constant for the eigenvalue ${\lambda}$
and $d{\lambda} \ne 0$ near ~$w_0$.

Now the eigenvalue ${\lambda}(w)$ in
Proposition~\ref{princprop} is the unique $C^\infty$ solution
to $\partial^{k-1}_{\lambda} |P(w) -
{\lambda}\id_N| = 0$ according
to Remark~\ref{smoothev}, where $k$ is the algebraic multiplicity. Thus we find that 
$d{\lambda}(w) \ne 0$ if and only if 
$$\partial_w\partial^{k-1}_{\lambda}
|P(w) - {\lambda}\id_N| \ne 0 \qquad \text{when ${\lambda} = {\lambda}(w)$}$$ 
We only need this condition for a symmetric systems having constant
multiplicity to be of principal type.

\begin{exe}
Let 
\begin{equation*}
 P({w}) = 
\begin{pmatrix}
 {w}_1 + i {w}_2^2 & {w}_2 \\ 0 & {w}_1 + i {w}_2^2
\end{pmatrix} \qquad w = (w_1,w_2) \in \br^2
\end{equation*}
then $P$ is of principal type, has constant algebraic multiplicity of the
eigenvalue ${w}_1 + i {w}_2^2$ but not constant 
geometric multiplicity. In fact, $\partial_{{w}_1}P = \id_2$,
$P({w})$ has non-trivial kernel
only when ${w}_2 = 0$ but the geometric multiplicity of the eigenvalue
is equal to one when ${w}_2 \ne 0$.
\end{exe}

\begin{exe}
Let 
\begin{equation*}
 P = p(x,D_x)\id_N + B(x,D_x) + P_0(x,D_x)
\end{equation*}
where $p\in S^1_{cl}$ is a scalar homogeneous symbol of principal type, $B \in
{\Psi}^1_{cl}$  with nilpotent homogeneous principal symbol ${\sigma}(B)$ and $P_0  \in 
{\Psi}^0_{cl}$. Then $p$ is the only eigenvalue to
${\sigma}(P)$ and $P$ is of principal type if and only if 
${\sigma}(B) = 0$ when $p = 0$ by Proposition~\ref{princprop}.
\end{exe}

\begin{rem}
Observe that the conclusion of  Proposition~\ref{princprop} does not
hold if the algebraic multiplicity is not constant. For example
\begin{equation*}
 P({w}) = 
\begin{pmatrix}
{w}_1 & 1 \\ {w}_2 & {w}_1 
\end{pmatrix} \qquad w = (w_1,w_2) \in \br^2
\end{equation*}
has determinant is equal to $ {w}_1^2 - {w}_2 $ and eigenvalues
${w}_1 \pm \sqrt{{w}_2}$, so the geometric but not the algebraic 
multiplicity is constant near ${w}_2 = 0$. 
Since 
\begin{equation*}
 \begin{pmatrix}
 0 & 1 \\ 1 & 0
\end{pmatrix}
P({w}) = 
\begin{pmatrix}
{w}_2 & {w}_1 \\ {w}_1 & 1 
\end{pmatrix}
\end{equation*}
we find that $P({w})$ is of principal type at $(0,0)$ by the invariance.
\end{rem}

\begin{proof}[Proof of Proposition~\ref{princprop}]
First we note that $P(w)$ is of principal typ at $w_0$ if and only if
\begin{equation}\label{princtypecond}
 \partial_{\nu}^k|P(w_0)| \ne 0 \qquad k = \dim \ker P(w_0)
\end{equation}
for some ${\nu} \in T(T^*\br^n)$. Observe that $ \partial^j|P(w_0)| =
0$ for $j < \dim \ker P(w_0)$. In fact, by choosing bases for $\ker
P(w_0)$ and $\im P(w_0)$ respectively, and extending to bases of $\bc^N$, we obtain
matrices $A$ and $B$ so that 
\begin{equation*}
 AP(w)B = 
\begin{pmatrix}
P_{11}(w) & P_{12}(w) \\ P_{21}(w) & P_{22}(w)
\end{pmatrix}
\end{equation*}
where $|P_{22}(w_0)| \ne 0$ and $P_{11}$, $P_{12}$ and $P_{21}$ all vanish at
~$w_0$. By the invariance, $P$ is of principal type if and
only if $\partial_{\nu}P_{11}$ is invertible for some ~${\nu}$, so by
expanding the determinant we obtain~\eqref{princtypecond}.

Now since the eigenvalue ${\lambda}(w)$ has constant algebraic multiplicity near $w_0$, we find that
\begin{equation*}
 |P(w) - {\lambda}\id_N| = ({\lambda}(w)-{\lambda})^m e(w,{\lambda})
\end{equation*}
near $w_0$, where ${\lambda}(w_0) = 0$, $e(w,{\lambda}) \ne 0$ and $m \ge \dim \ker P(w_0)$ is the
algebraic multiplicity. By putting ${\lambda}= 0$ we obtain that
$ \partial_{\nu}^j|P(w_0)| = 0$ if $j < m$ and $
\partial_{\nu}^m|P(w_0)| = (\partial_{\nu}{\lambda}(w_0))^m e(w_0,0)$
which proves Proposition~\ref{princprop}.
\end{proof}

The following example shows that if the system is not of
principal type then it need not be solvable, even if it has
real eigenvalues with constant characteristics.

\begin{exe}\label{popex} 
Let $P \in {\Psi}^2_{cl}$ have principal symbol ${\sigma}(P) = p^2$ where
$p \in S^1_{cl}$ is real, homogeneous of degree 1, and of principal
type. Then $P$ is not solvable 
if the imaginary part of the subprincipal symbol of $P$ changes sign
on the bicharacteristics of $p$ by~ \cite{pop}. Observe that the
subprincipal symbol is invariantly defined at the double
characteristics $p^{-1}(0)$.
As in the proof of Theorem~\ref{exthm}, the equation can be reduced to the system
\begin{equation*}
\mathbb P =
 \begin{pmatrix}
p(x,D_x)  & p_1(x,D_x) \\
-1 &  p(x,D_x)
\end{pmatrix} 
\end{equation*}
where ${\sigma}(p_1) \in S^1_{cl}$ is equal to the subprincipal symbol of $P$ on
$p^{-1}(0)$. The system is of principal type at $w\in p^{-1}(0)$ if
and only if ${\sigma}(p_1)(w) = 0$ by Proposition~\ref{princprop}.
This system has real eigenvalues of constant characteristics so it
satisfies condition ($P$), but it is neither solvable nor of principal type if $
\im p_1$ changes sign along the bicharacteristic of ~$p$. 
Observe that the system 
\begin{equation*}
\mathbb P =
 \begin{pmatrix}
p(x,D_x)  & p_1(x,D_x) \\
0 &  p(x,D_x)
\end{pmatrix} 
\end{equation*}
is solvable, since it is upper triangle with solvable
diagonal elements. Thus the solvability depends on the lower order
terms in this case.
\end{exe}

The next example is an unsolvable system of principal type with
real eigenvalues, but it does not have constant characteristics. 

\begin{exe}\label{newex}
Let 
\begin{equation}\label{Pdef}
P = 
\begin{pmatrix}
D_{x_1} & B(x,D_x) \\ -1 & D_{x_1} + R(D_x)
\end{pmatrix}
\end{equation}
 where $R({\xi}) = {\xi}_2^{2}/|{\xi}|$ and ${\sigma}(B)(x,{\xi}) =
{\xi}_2 B_0(x,{\xi})$ with homogeneous $B_0 \in S^0$.
The eigenvalues of the principal symbol ${\sigma}(P)$ are ${\xi}_1$
and ${\xi}_1 + R({\xi})$ which are real and coincide when ${\xi}_2 = 0$.
Since $\partial_{{\xi}_1}\sigma (P) = \id_2$ and ${\sigma}(B)$ vanish
when ${\xi}_2 = 0$, 
we find that $P$ is of principal type by Proposition~\ref{princprop}. If
$t \mapsto \im B_0(t,x',0,0,{\xi}'')$ changes sign at $t= x_1$, then $P$ is not
microlocally solvable at $(x,0,0,{\xi}'')$, here $x= (x_1,x') =
(x_1,x_2,x'')$ and ${\xi}'' \ne 0$.
In fact, the system $PU = F$ with $U = {}^{t}(u_1,u_2)$ and $F =
{}^{t}(f_1,f_2)$ is equivalent to the equation
\begin{equation}\label{Qdef}
Qu_2 = (D_{x_1}(D_{x_1} + R(D_x)) + B(x,D_x))u_2 = f_1 + D_{x_1}f_2
\end{equation}
if we put $u_1 = (D_{x_1} + R(D_x))u_2 - f_2$. Thus the system~ $P$ is
solvable if and only if $Q$ is solvable.
That $Q$ is not solvable follows from using the construction of
approximate solutions to the adjoint in
~\cite{MenUhl}, replacing $D_{x_2}$ with $R(D_x)$.

We can also generalize this to the case where 
\begin{equation*}
 R({\xi}) = {\xi}_2^k|{\xi}|^{1-k}
\end{equation*}
and ${\sigma}(B)(x,{\xi}) = {\xi}^j_2 B_j(x,{\xi})$ with $j < k$ and $B_j \in
S^{1-j}$ homogeneous,
satisfying the same conditions as $B_0$.
On the other hand, if  ${\sigma}(B)(x,{\xi}) = {\xi}^j_2 B_j(x,{\xi})$
with $j \ge k$ then we can write
\begin{equation*}
 B(x,D_x) \cong  A(x,D_{x})R(D_x)\qquad \text{modulo ${\Psi}^0$}
\end{equation*}
for some $A \in {\Psi}^0$ and then
\begin{equation*}
 \begin{pmatrix}
1 & -A \\ 0 & 1 
\end{pmatrix}
P
 \begin{pmatrix}
1 & A \\ 0 & 1 
\end{pmatrix}
\cong
\begin{pmatrix}
 D_{x_1} & 0 \\ 0 & D_{x_1}  + R(D_x)
\end{pmatrix}
\qquad\text{modulo ${\Psi}^0$}
\end{equation*}
which is solvable. 
In fact, the principal symbol is on diagonal form 
with real diagonal elements of principal type, giving
$L^2$ estimates of the adjoint which can be perturbed by lower order terms. 
\end{exe}

Finally, we have an example of an unsolvable operator which is
diagonalizable and self-adjoint, but not of principal type.

\begin{exe} \label{nonsolvex}
Take real  $b(t) \in C^\infty(\br)$, and define the symmetric system
\begin{equation*}
P = 
\begin{pmatrix}
D_t + b(t)D_x & (t - ib(t))D_x  \\ (t + ib(t))D_x  & -D_t + b(t)D_x 
\end{pmatrix} = P^* 
\qquad (t,x) \in \br^2
\end{equation*}
Eigenvalues of ${\sigma}(P)$ are $b(t){\xi} \pm \sqrt{{\tau}^2 +
  (t^2 + b^2(t)){\xi}^2} $ which are zero for $({\tau},{\xi}) \ne 0$ 
only if $t = {\tau}= 0$. 
The eigenvalues coincide for $({\tau},{\xi}) \ne 0$ if and only if
$b(t) = t = {\tau}= 0$.  
We have that 
\begin{equation*}
 Q = \frac{1}{2}
\begin{pmatrix}
1 & -i \\ 1 & i 
\end{pmatrix}P
\begin{pmatrix}
1 & 1 \\ -i & i 
\end{pmatrix} =
\begin{pmatrix}
D_t - itD_x &  2b(t)D_x \\ 0 & D_t +itD_x
\end{pmatrix}
\end{equation*}
which is not locally  solvable at $t=0$ for any choice of~$b(t)$, since $D_t +itD_x$ is
not locally solvable, condition ~(${\Psi}$) is not satisfied when
${\xi} > 0$. The eigenvalues of the principal symbol ${\sigma}(Q)$
are ${\tau} \pm it{\xi}$. By the invariance, $P$ is of principal type if
and only if $b(0)=0$. When $b(t) \ne 0$ we find that
${\sigma}(P)$ is diagonalizable and self-adjoint, but not of principal
type. When $b \equiv 0$ the system is symmetric of principal type, but
does not have constant characteristics.
\end{exe}

\section{The multiplier estimates}\label{reduc}

In this section we shall prove multiplier estimates
for microlocal normal forms of the adjoint
operator, which we shall use in the proof of Theorem~\ref{mainthm}.  We shall consider the model operators 
\begin{equation}\label{pdef1}
P_0 = (D_t + i F(t,x,D_x))\id_N + F_0(t,x,D_x) 
\end{equation}
where $F \in C^\infty(\br, {\Psi}^1_{cl}(\br ^{n}))$ is scalar with 
with real homogeneous principal
symbol ${\sigma}(F) = f$, and $F_0 \in  C^\infty(\br, {\Psi}^0_{cl})$
is an $N \times N$ system. 
In the following, we shall
assume that ~~$P_0$ satisfies
condition ~($\overline{\Psi}$): 
\begin{equation} \label{pcond0}
f(t,x,{\xi}) > 0  \quad\text{and $s > t$} \implies  f(s,x,{\xi}) \ge 0
\end{equation}
for any ~$t$, $s \in \br$ and $(x,{\xi}) \in T^*\br^n$. This means
that the ~adjoint $P_0^*$ satisfies condition ~(${\Psi}$) for the
eigenvalue ${\tau} - if(t,x,{\xi})$. Observe
that if ${\chi} \ge 0$ then ${\chi}f$ also satisfies ~\eqref{pcond0}, thus
the condition can be localized.

\begin{rem} \label{linftyrem}
We may also consider symbols $f \in L^\infty(\br, S^1_{1,0}(\br
^{n}))$, that is, $f(t,x,{\xi}) \in L^\infty(\br \times T^*\br^n)$\/
is bounded in~$S^1_{1,0}(\br ^{n})$ for almost all ~$t$. Then we say
that $P_0$ satisfies condition ~($\ol {\Psi}$) if for every~$(x,{\xi})$
condition~\eqref{pcond0} holds for almost all $s$, $t \in \br$.
\end{rem}

Observe that, since 
$(x,{\xi}) \mapsto f(t,x,{\xi})$ is continuous for almost all ~$t$, it
suffices to check ~\eqref{pcond0} for~$(x,{\xi})$ in a countable dense
subset of~ $T^*\br^n$. Then we find that $f$ has a representative
satisfying~\eqref{pcond0} for any~$t$, $s$ and ~$(x,{\xi})$ after
putting $f(t,x,{\xi}) \equiv 0$ for ~$t$ in a null
set.

In order to prove Theorem~\ref{mainthm} we shall make a
second microlocalization using the specialized symbol classes of
the Weyl calculus, and the Weyl quantization of symbols $a \in \Cal
S'(T^*\br^n)$ defined by: 
\begin{equation*} 
\sw{a^wu,v} = (2{\pi})^{-n}\iint
\exp{(i\w{x-y,{\xi}})}a\!\left(\tfrac{x+y}{2},{\xi}\right)u(y)\ol
v(x)\,dxdyd{\xi}
\qquad u, v \in \Cal S(\br^n)
\end{equation*} 
Observe that $\re a^w = (\re a)^w$ is the symmetric
part and $i\im a^w = (i\im a)^w$ the antisymmetric part of the
operator $a^w$. Also, if $a \in S^m_{1,0}(\br ^n)$ then $a^w(x,D_x)
= a(x,D_x)$ modulo ${\Psi}^{m-1}_{1,0}(\br ^n)$ by \cite[Theorem~
18.5.10]{ho:yellow}. The same holds for $N \times N$ systems
of operators.

We recall the definitions of the Weyl calculus: let $g_{w}$ be a
Riemannean metric on~ $T^*\br^n$, $w = (x,{\xi})$, then we say that
$g$ is slowly varying if there exists $c>0$ so that $g_{w_0}(w-w_0) <
c $ implies
$$1/C \le g_{w}/g_{w_0}
\le C  $$ 
that is,  $g_{w} \cong g_{w_0}$.
Let ${\sigma}$ be the standard symplectic form on $T^*\br^n$, $ g^{\sigma}(w) $
the dual metric of $w \mapsto
g({\sigma}(w))$ and assume that $ g^{\sigma}(w) \ge g(w)$.  We say that $g$
is ${\sigma}$~ temperate if it is slowly varying and
\begin{equation*}
 g_{w} \le C g_{w_0}( 1 +
 g^{\sigma}_{w}(w-w_0))^N  \qquad
\text{$w$, $w_0 \in T^*\br^n$} 
\end{equation*}
A positive real valued function $m(w)$ on $T^*\br^n$ is 
$g$~continuous if there exists $c>0$ so that
$g_{w_0}(w-w_0) < c $ implies $m(w) \cong
m(w_0)$.
We say that $m$ is 
${\sigma}$, $g$~ temperate if it is $g$ ~continuous and 
\begin{equation*}
 m(w) \le C m(w_0)( 1 +
 g^{\sigma}_{w}(w-w_0))^N  \qquad\text{$w$,
   $w_0 \in T^*\br^n$}  
\end{equation*}
If $m$ is ${\sigma}$, $g$~ temperate, 
then $m$ is a weight for ~$g$ and we can define the symbol classes:
$a \in S(m,g)$ if $a \in C^\infty(T^*\br^n)$ and
\begin{equation}\label{symbolest} 
|a|^g_j(w) = \sup_{T_i\ne 0}
\frac{|a^{(j)}(w,T_1,\dots,T_j)|}{\prod_1^j
g_{w}(T_i)^{1/2}}\le C_j
m(w)\qquad w \in T^*\br^n \qquad\text{$j \ge 0$}
\end{equation}
which defines the seminorms of $S(m,g)$. Of course, these symbol
classes can also defined locally.
For matrix valued symbols, we use the matrix norms.
If $a \in
S(m,g)$ then we say that the corresponding Weyl operator $a^w \in \op
S(m,g)$.  
For more results on the Weyl calculus, see \cite[Section
18.5]{ho:yellow}.

\begin{defn}\label{s+def}
Let $m$ be a weight for the metric $g$.  We say that $a \in S^+(m,g)$
if $a \in C^\infty(T^*\br^n)$ and $|a|_j^g \le C_jm$ for $j \ge
1$.
\end{defn}

Observe that if $a \in S^+(m,g)$ then 
$a$ is a symbol. In fact, since $g \le g^{\sigma}$ we find by
integration that 
\begin{equation*}
\begin{split} 
 |a(w) - a(w_0)| \le C_1\sup_{{\theta} \in [0,1]} m(w_{\theta}) 
 g_{w_{\theta}}(w-w_0)^{1/2} \le C_N m(w_0)(1 +
 g_{w_0}^{{\sigma}}(w-w_0))^{N_0}  
\end{split}
\end{equation*}
where $w_{\theta} = {\theta}w + (1-{\theta})w_0$,
which implies that $m + |a|$ is a weight for ~$g$.
Clearly, $a \in S(m + |a|,
g)$, so the operator ~$a^w$ is well-defined.

\begin{lem}\label{calcrem}
Assume that $m_j$ is a weight for for the ${\sigma}$ temperate conformal metrics
$g_j = h_j g^\sh \le g^\sh = (g^\sh)^{\sigma}$ and $a_j \in  
S^+(m_j, g_j)$, $j=1$, $2$.  Let $g = (g_1 + g_2)/2$ and $h^2 = \sup
g_1/g_2^{\sigma} = \sup g_2/g_1^{\sigma}$, then we find that $h^2  =
h_1h_2$ and
\begin{equation}\label{2.3}
a_1^wa_2^w -(a_1a_2)^w \in \op S(m_1m_2h,g)
\end{equation}
We also obtain the usual expansion of~\eqref{2.3}  with terms in $
S(m_1m_2h^k,g)$, $k\ge 1$. 
\end{lem}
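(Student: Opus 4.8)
The strategy is to reduce everything to the standard composition calculus in the Weyl quantization, as in \cite[Section~18.5]{ho:yellow}, and then check that the regularized symbols in $S^+$ behave well enough that the usual asymptotic expansion survives. First I would verify the metric bookkeeping: since $g_j = h_j g^\sh$ with $g^\sh$ symplectic (self-dual), one has $g_j^\sigma = h_j^{-1} g^\sh$, so $g_1/g_2^\sigma = h_1 h_2 = g_2/g_1^\sigma$ identically, which gives $h^2 = h_1 h_2$ and in particular shows $h \le 1$ (because $g_j \le g_j^\sigma$ forces $h_j \le 1$). One also checks that $g = (g_1+g_2)/2$ is $\sigma$~temperate with $h$ as its "Planck function", i.e. $h^2 = \sup g/g^\sigma$, and that $m_1$, $m_2$ are weights for $g$ since they are weights for $g_1$ and $g_2$ respectively and $g$ is comparable to whichever is larger. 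This is the routine part.

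Next, by the remark following Definition~\ref{s+def}, each $a_j$ lies in $S(m_j + |a_j|, g_j)$, so the composition $a_1^w a_2^w$ is a priori well defined and, by the standard Weyl calculus \cite[Theorem~18.5.4]{ho:yellow}, its symbol $a_1 \# a_2$ has an asymptotic expansion whose $k$-th term is a universal bilinear expression in derivatives $a_1^{(j_1)}$, $a_2^{(j_2)}$ with $j_1 + j_2 = k$ contracted against powers of the symplectic form. The key point is that for $k \ge 1$ every term involves at least one derivative of $a_1$ \emph{and} at least one derivative of $a_2$ (the $k=1$ term is $\tfrac{1}{2i}\{a_1,a_2\}$, and higher terms only increase both orders), so one may estimate using the $S^+$ bounds $|a_j|^{g_j}_\ell \le C_\ell m_j$ valid for $\ell \ge 1$ — the value $a_j(w)$ itself never enters. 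Counting powers of $h$ as in the proof of \cite[Theorem~18.5.5]{ho:yellow}: contracting a $j_1$-linear form bounded by $m_1 h_1^{j_1/2}(g^\sh)^{j_1/2}$ against a $j_2$-linear form bounded by $m_2 h_2^{j_2/2}(g^\sh)^{j_2/2}$ through $k = (j_1+j_2)/2$ copies of $\sigma$ produces a factor $m_1 m_2 (h_1 h_2)^{k/2} \cdot (\text{something in } S(1,g))$, since each $\sigma$-contraction converts two $(g^\sh)^{1/2}$ factors into one power of $h = (h_1h_2)^{1/2}$ relative to $g$. Thus the $k$-th term lies in $S(m_1 m_2 h^k, g)$, and in particular the full remainder $a_1 \# a_2 - a_1 a_2$, being an asymptotic sum starting at $k=1$, lies in $\op S(m_1 m_2 h, g)$.

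Finally, to make the asymptotic statement rigorous one truncates the expansion at order $K$ and controls the remainder by the oscillatory-integral estimate of \cite[Theorem~18.5.5]{ho:yellow}, applied with the metric $g$ and weight $m_1 m_2 h^{K}$; again only seminorms $|a_j|_\ell$ with $\ell \ge 1$ are needed because the remainder integrand, after integrating by parts, is built from derivatives of both factors. \textbf{The main obstacle} is precisely this last bookkeeping: one must make sure the remainder term in the Weyl composition formula can be estimated purely in terms of the $S^+$ seminorms (those with at least one derivative), rather than the full $S$ seminorms including the $j=0$ norm $\sup |a_j|/m_j$, which need not be finite here. This works because the remainder in the stationary-phase expansion always carries at least $K+1$ derivatives distributed between the two symbols, and for $K \ge 0$ this forces at least one derivative on each of $a_1$ and $a_2$ — so the $S^+$ bounds suffice, and one does not need $a_j \in S(m_j, g_j)$, only $a_j \in S^+(m_j, g_j)$.
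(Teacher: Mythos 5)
Your proposal is correct and follows essentially the same route as the paper: both reduce to the standard Weyl composition formula of \cite[Section~18.5]{ho:yellow}, observe that every term (and the remainder) of $a_1\# a_2 - a_1a_2$ carries at least one derivative on \emph{each} factor because $\sigma(D_{w_1},D_{w_2})$ differentiates both arguments, so only the $S^+$ seminorms enter, and then count powers of $h=(h_1h_2)^{1/2}$. The paper organizes the remainder slightly more compactly by writing $a_1\# a_2-a_1a_2 = E(\tfrac{i}{2}\sigma(D,D))\tfrac{i}{2}\sigma(D,D)a_1a_2$ with $E(z)=\int_0^1 e^{\theta z}\,d\theta$ and invoking the proof of Theorem~18.5.5 uniformly in $\theta$, but this is the same argument you describe.
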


Observe that by Proposition~18.5.7  and~(18.5.14) in ~\cite{ho:yellow} we find that $g$
is ${\sigma}$ temperate and $g/g^{\sigma} \le (h_1 + h_2)^2/4 \le 1$.

\begin{proof} 
As showed after Definition~\ref{s+def} we have that $m_j + |a_j|$ is a
weight for ~$g_j$ and $a_j \in S(m_j + |a_j|, g_j)$, $j = 1$, 2.
Thus 
$$a_1^w a_2^w \in \op S((m_1 + |a_1|)(m_2 + |a_2|),g)$$
is given by  Proposition~18.5.5 in~\cite{ho:yellow}.
We find that $a_1^wa_2^w -(a_1a_2)^w = a^w$ with
\begin{equation*}
a(w) = E(\tfrac{i}{2}{\sigma}(D_{w_1},D_{w_2}))
\tfrac{i}{2}{\sigma}(D_{w_1},D_{w_2})a_1(w_1)a_2(w_2)\restr{w_1= w_2=w}  
\end{equation*}
where $E(z) = (e^z -1)/z = \int_0^1 e^{{\theta}z}\,d{\theta}$. 
We have that
${\sigma}(D_{w_1},D_{w_2})a_1(w_1)a_2(w_2) \in S(M,G)$ where
$$
M(w_1,w_2) = m_1(w_1)m_2(w_2)h_1^{1/2}(w_1)h_2^{1/2}(w_2) 
$$
and $G_{w_1,w_2}(z_1,z_2) = g_{1,w_1}(z_1) + g_{2,w_2}(z_2)$. Now the
proof of Theorem~18.5.5 in ~\cite{ho:yellow} works also when
${\sigma}(D_{w_1},D_{w_2})$ is replaced by
${\theta}{\sigma}(D_{w_1},D_{w_2})$, uniformly in $0 \le {\theta} \le
1$. By using Proposition~18.5.7 in ~\cite{ho:yellow} and integrating over
${\theta} \in [0,1]$ we obtain that $a(w)$ has 
an asymptotic expansion in $S(m_1m_2 h^k,g)$, which proves the Lemma.
\end{proof}

\begin{rem}\label{vvcalc}
The conclusions of Lemma~\ref{calcrem} also hold if $a_1
$ has values in $\Cal L(B_1, B_2)$ and $a_2$ has values in~ $B_1$ 
where $B_1$ and $B_2$ are Banach spaces (see Section~18.6
in~\cite{ho:yellow}).
\end{rem}
 
For example, if $\set{a_j}_j \in S(m_1,g_1)$ with values in $\ell^2$,
and $b_j \in S(m_2,g_2)$ uniformly in ~$j$, then $\set{a_j^wb_j^w}_j \in
\op (m_1m_2,g)$ with values in ~$\ell^2$.  
Thus, if $\set{{\phi}_j}_j \in S(1,g)$ is a partition of unity so
that $\sum_{j}{\phi}_j^2 = 1$ and $a \in S(m,g)$, then
$\set{{\phi}_ja}_j \in S(m,g)$ has values in $\ell^2$.

\begin{exe}
The standard symbol class $S^{\mu}_{{\varrho},{\delta}}$ defined by 
\begin{equation*}
 |\partial_x^{\alpha}\partial_{\xi}^{\beta} a(x,{\xi})| \le
 C_{{\alpha}{\beta}}\w{{\xi}}^{{\mu} + {\delta}|{\alpha}| -
   {\varrho}|{\beta}|} 
\end{equation*}
has ${\sigma}$~temperate metric if $0 \le {\delta}
\le {\varrho} \le 1$ and ${\delta} < 1$.  
\end{exe}

In the proof of
Theorem~\ref{mainthm} we shall microlocalize near~$(x_0,{\xi}_0)$ and
put $h^{-1} = \w{{\xi}_0} = 1 + |{\xi}_0|$. Then after doing a
symplectic dilation: $(x,{\xi}) \mapsto (h^{-1/2}x, h^{1/2}{\xi})$, we
find that $S^k_{1,0} = S(h^{-k}, h g^\sh)$ and $S^k_{1/2,1/2} =
S(h^{-k}, g^\sh)$, $k \in \br$, where $g^\sh = (g^\sh)^{\sigma}$ is
the Euclidean metric. We shall
prove a semiclassical estimate for a microlocal normal form
of the operator.

Let $\mn u$ be the $L^2$ norm on $\br^{n+1}$, and $\sw{u,v}$ the
corresponding sesquilinear inner product. As before, we say that $f
\in L^\infty(\br, S(m, g))$ if $f(t,x,{\xi})$\/ is measurable and
bounded in~$S(m, g)$ for almost all ~$t$.  The following is
the main estimate that we shall prove.

\begin{prop}\label{mainprop} 
Assume that 
$$P_0 = (D_t + i f^w(t,x,D_x))\id_N + F_0^w(t,x,D_x)$$ 
where $f \in
L^\infty(\br, S(h^{-1}, hg^\sh))$ is real satisfying
condition~$(\overline{\Psi})$ given by ~\eqref{pcond0},
and $F_0 \in L^\infty(\br, S(1, hg^\sh))$ is an $N \times N$ system, here $0 < h
\le 1$ and $g^\sh = (g^\sh)^{\sigma}$ are constant. Then there
exists\/ $T_0 > 0$ and $N \times N$ symbols $b_T(t,x,{\xi}) \in
L^\infty(\br, S(h^{-1/2}, g^\sh) \bigcap S^+(1, g^\sh))$ such that
$\im b_T \in L^\infty(\br, S(h^{1/2}, g^\sh))$
uniformly for $0 < T \le T_0$, and
\begin{equation}\label{propest}
 h^{1/2}\left(\mn{b_T^wu}^2 + \mn u^2\right) \le C_0 T \im\sw{P_0u,b_T^wu}
\end{equation} 
for $u(t,x) \in \Cal S(\br\times \br^{n}, \bc^N)$ having support where
$|t| \le T$. The constants $C_0$, $T_0$ and the seminorms of \/$b_T$ only depend
on the seminorms of $f$ and $F_0$.
\end{prop}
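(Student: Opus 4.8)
The estimate \eqref{propest} is the matrix-valued, loss-$3/2$ analogue of the basic multiplier estimate from the scalar Nirenberg--Treves proof, so I would follow the architecture of \cite{de:NT} and reduce to it. Since the principal symbol of $P_0$ is scalar, the multiplier $b_T^w$ will be (up to lower order) the scalar Lerner--Dencker multiplier $B_T$ of \cite{de:NT}, \cite{ln:cutloss} times $\id_N$: one first constructs, using condition $(\overline\Psi)$ \eqref{pcond0} on $f$, a real symbol $b_T$ with a sign change in $t$ matching that of $f$, normalized so that $b_T \in S(h^{-1/2},g^\sh)\cap S^+(1,g^\sh)$ with $\im b_T\in S(h^{1/2},g^\sh)$, and so that the scalar operator $D_t\id_N + if^w\id_N$ already gives
\begin{equation*}
 h^{1/2}\bigl(\mn{B_T^wu}^2 + \mn u^2\bigr) \le C_0T\,\im\sw{(D_t+if^w)u,\,B_T^wu}
\end{equation*}
for $u$ supported in $|t|\le T$, with $B_T=b_T\id_N$. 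This is exactly the content of the scalar estimate already in the literature, applied componentwise; I would simply cite it and record the relevant symbol bounds on $b_T$.

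\textbf{The matrix correction.} The new point is to absorb the zeroth order system $F_0^w$ into the right-hand side. Write
\begin{equation*}
 \im\sw{P_0u,B_T^wu} = \im\sw{(D_t+if^w\id_N)u,B_T^wu} + \im\sw{F_0^wu,B_T^wu}.
\end{equation*}
The scalar estimate controls the first term from below by $h^{1/2}(\mn{B_T^wu}^2+\mn u^2)/C_0T$. For the second term I would estimate
\begin{equation*}
 |\sw{F_0^wu,B_T^wu}| \le \mn{F_0^wu}\,\mn{B_T^wu} \le C\mn u\,\mn{B_T^wu} \le \tfrac12\bigl(C^2\mn u^2 + \mn{B_T^wu}^2\bigr),
\end{equation*}
since $F_0\in L^\infty(\br,S(1,hg^\sh))\subseteq L^\infty(\br,S(1,g^\sh))$ is uniformly $L^2$-bounded by Calderón--Vaillancourt. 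But this only shows $|\im\sw{F_0^wu,B_T^wu}| \le C'(\mn u^2 + \mn{B_T^wu}^2)$, which is \emph{not} small compared to the gain $h^{1/2}(\cdots)$ on the left. This is where the time localization $|t|\le T$ must be used a second time, together with a gain in $h$: I would refine the scalar estimate so that it produces, for the first term, not merely $h^{1/2}(\mn{B_T^wu}^2+\mn u^2)$ but rather something with an extra factor $1/T$ (or an independent positive lower bound) that dominates the $O(1)$ contribution of $F_0$ once $T$ is chosen small. Concretely, since $u$ is supported in $|t|\le T$, Cauchy--Schwarz in $t$ gives $\mn u^2 \le 2T\mn{\partial_t u}$-type inequalities are not quite enough; instead one uses that in the scalar estimate the left side actually comes with a factor that beats $T$, so dividing \eqref{propest} by $T$ absorbs $F_0$. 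This is precisely the reason the statement carries the factor $T$ on the right and $T\le T_0$: choosing $T_0$ small makes the genuinely lower-order system term a perturbation.

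\textbf{Where the real work is, and the deferral.} The subtle part — and the reason Proposition~\ref{mainprop} is proved later, in Section~\ref{lower}, rather than here — is that with a loss of more than one derivative the commutator and lower-order terms in the Weyl calculus expansion of $(D_t+if^w)b_T^w$ and $b_T^w(D_t+if^w)$ are no longer automatically negligible: by Lemma~\ref{calcrem}, $f^wb_T^w-(fb_T)^w\in \op S(h^{-1}\cdot h^{-1/2}\cdot h, g^\sh)=\op S(h^{-1/2},g^\sh)$, which is the \emph{same} size as $h^{1/2}\mn{b_T^wu}^2$ controls — so these terms, and the matrix-valued terms coming from $F_0$, must be handled by the Wick-quantization and energy arguments of Section~\ref{lower} rather than by a soft perturbation. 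The honest plan is therefore: (i) set $b_T$ equal to the scalar multiplier of \cite{de:NT} times $\id_N$ and record its symbol class membership and the properties $\im b_T\in S(h^{1/2},g^\sh)$; (ii) expand $\im\sw{P_0u,b_T^wu}$ and isolate the scalar principal contribution, the commutator contributions, and the $F_0$-contribution; (iii) for the scalar principal part invoke the sharp scalar multiplier estimate, gaining $h^{1/2}(\mn{b_T^wu}^2+\mn u^2)$ with the factor $1/T$; (iv) in Section~\ref{lower}, show that the commutator terms and the $F_0^w$ term are bounded by $CT^{1/2}h^{1/2}(\mn{b_T^wu}^2+\mn u^2) + \text{(absorbable)}$ using the Wick calculus and the support condition in $t$; and (v) choose $T_0$ small to absorb everything into the left-hand side. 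I expect step (iv) — controlling the matrix-valued lower order terms with the correct power of $h$ and a small constant — to be the main obstacle, and indeed the text flags it as ``the main new part of the paper.''
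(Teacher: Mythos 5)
Your architecture is right as far as it goes -- the multiplier is the scalar Lerner--Dencker pseudo-sign quantized in the Wick calculus, the time derivative $\partial_t B_T^{Wick}\ge m^{Wick}/2T$ supplies the gain, and the factor $1/T$ is what must absorb the errors -- but the proposal is missing the one idea that actually handles $F_0$, and without it the argument fails. Your own Cauchy--Schwarz bound $|\sw{F_0^wu,b_T^wu}|\le C(\mn u^2+\mn{b_T^wu}^2)$ is, as you note, an $O(1)$ error, while the gain from the multiplier is only $h^{1/2}(\mn{b_T^wu}^2+\mn u^2)/T$ (since $m\ge h^{1/2}\w{{\delta}_0}^2/6$ is the best uniform lower bound on the weight). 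Choosing $T$ small cannot close this gap unless $T\lesssim h^{1/2}$, and then $T_0$ depends on $h$, which destroys the microlocalization in Section~\ref{proof}: the constants must be uniform in $h$. So "dividing by $T$" does not absorb a genuinely zeroth-order system term, and no refinement of the scalar estimate alone can, because for $f\equiv 0$ and $F_0$ a constant skew-adjoint matrix the right-hand side of \eqref{propest} really is only $O(T\mn u^2)$.

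The paper's resolution is structural, not perturbative: one first conjugates $P_0$ by $E^w$ where $E$ solves the matrix ODE $D_tE+F_0E=0$ with $E|_{t=0}=\id_N$ (Claim~\ref{subrem}). This eliminates the dangerous part of $F_0$ entirely and leaves a remainder of the special form $F_0=\w{d_wf,R_0}$ modulo $S(h,hg^\sh)$ with $R_0\in S(h^{1/2},hg^\sh)$ -- i.e.\ a lower order term that vanishes where $df$ does. Such a term lies in $S(MH^{1/2}h^{1/2},G)$ and, using the factorization $f={\alpha}_0{\delta}_0$ of Proposition~\ref{ffactprop} and the key inequality $MH^{3/2}\w{{\delta}_0}^2\le C_0m$ of Proposition~\ref{mestprop}, its contribution $\re F_{0,j}^wB^{Wick}$ can be written as $\re a_j^wR_j^w$ and absorbed into the completed square $\frac14(a_j^w+2S_j^w)^*(a_j^w+2S_j^w)\ge 0$ modulo $\op S(m,g^\sh)$ -- that is, it is dominated by the weight $m$, which the $1/T$ gain does control. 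The price of the conjugation is that the multiplier becomes $b_T(E^{-1})^*E^{-1}$ modulo $S(h^{1/2},g^\sh)$ rather than a scalar times $\id_N$, which is exactly why the proposition allows $b_T$ to be a genuine $N\times N$ system with $\im b_T\in S(h^{1/2},g^\sh)$. Deferring step (iv) to ``Section~\ref{lower}'' is circular here, since that section is where this proposition is proved; the conjugation and the resulting $\w{d_wf,R_0}$ structure are the content you would need to supply.
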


\begin{rem} 
It follows from the proof that $b_T = \wt b_TE^*E$ modulo $S(h^{1/2},
g^\sh)$ where $E \in S(1, hg^\sh)$ is an invertible
$N \times N$ system, $\wt b_T$ is scalar and $|\wt b_T| \le CH^{-1/2}$, here $H$ is
a weight for $g^\sh$ such that $h \le H \le 1$, and $G = Hg^\sh$
is ${\sigma}$ temperate (see Claim~\ref{subrem},
Definition~\ref{g1def} and Proposition~\ref{wickweyl}). 
\end{rem}

Observe that it follows from ~\eqref{propest}
and the Cauchy-Schwarz inequality that
\begin{equation*}
\mn u \le C  T h^{-1/2}\mn{P_0u}
\end{equation*}
which will give a loss of $3/2$ derivatives after microlocalization.
Proposition~\ref{mainprop} will be proved in Section
~\ref{lower}.

There are two difficulties present in estimates of the
type~\eqref{propest}. The first is that $b_T$ is not $C^\infty$ in the
$t$~variables, therefore one has to be careful not to involve $b_T^w$
in the calculus with symbols in all the variables. We shall avoid this
problem by using tensor products of operators and the Cauchy-Schwarz
inequality.  The second difficulty lies in the fact that we could have
$|b_T| \gg h^{1/2}$, so it is not obvious that cut-off errors can be
controlled.

\begin{lem}\label{microrem} 
The estimate ~\eqref{propest} can be perturbed with terms in
$L^\infty(\br, S(h^{1/2}, hg^\sh))$ in the expansion of $P_0$. Also,
it can be microlocalized: if ${\phi}(w) \in S(1, hg^\sh)$ is real
valued and independent of ~$t$, then we have
\begin{equation}\label{cutofferror}
\im\sw{P_0{\phi}^w u, b_{T}^w{\phi}^w u} \le
\im\sw{P_0u,{\phi}^w b_T^w{\phi}^w u} + Ch^{1/2}\mn u^2 \qquad u(t,x)
\in \Cal S(\br^{n+1}, \bc^N) 
\end{equation}
where ${\phi}^wb_T^w{\phi}^w$ satisfies the same conditions as~$b_T^w$.
\end{lem}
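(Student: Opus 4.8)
The plan is to treat the two assertions separately. For the first assertion, suppose $P_0$ is replaced by $P_0 + E^w$ with $E \in L^\infty(\br, S(h^{1/2}, hg^\sh))$. Since $b_T \in L^\infty(\br, S(h^{-1/2}, g^\sh))$, I would estimate $|\sw{E^w u, b_T^w u}|$ by Cauchy--Schwarz as $\mn{E^w u}\,\mn{b_T^w u}$; the issue is that $\mn{b_T^w u}$ is a priori only controlled by $h^{1/2}\mn{b_T^w u}^2 \le C_0 T \im\sw{P_0 u, b_T^w u}$, so one cannot simply absorb it. Instead I would use that $E \in S(h^{1/2}, hg^\sh) \subseteq S(h^{1/2}, g^\sh)$ combined with the fact that $b_T^w (E^w)^*$ lies in $\op S(h^{-1/2}\cdot h^{1/2}, g^\sh) = \op S(1, g^\sh)$ by Lemma~\ref{calcrem} (with $g_1 = g_2 = g^\sh$, $h_1 = h_2 = 1$), hence $|\sw{E^w u, b_T^w u}| = |\sw{u, (E^w)^* b_T^w u}| \le C\mn u^2$. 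Taking imaginary parts and absorbing $Ch^{1/2}\mn u^2$ into the left side of \eqref{propest} (which is harmless after perturbing $C_0$, $T_0$), this gives the estimate for the perturbed operator; one should also note the term $b_T^w(E^w)^*$ is not $C^\infty$ in $t$, but the product is only in the $(x,\xi)$ variables, so the calculus of Lemma~\ref{calcrem} applies for fixed $t$ and one integrates in $t$ afterwards, exactly as in the discussion following Proposition~\ref{mainprop}.

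For the microlocalization assertion, I would write $P_0 {\phi}^w = {\phi}^w P_0 + [P_0, {\phi}^w]$. Since ${\phi}$ is independent of $t$, the commutator $[D_t, {\phi}^w] = 0$, so $[P_0, {\phi}^w] = i[f^w, {\phi}^w] + [F_0^w, {\phi}^w]$. By Lemma~\ref{calcrem}, $[f^w, {\phi}^w] \in \op S(h^{-1}\cdot 1 \cdot h, hg^\sh) = \op S(1, hg^\sh)$ — indeed since ${\phi}$ is real the principal symbol $\{f, {\phi}\}$ appears but the full commutator still lands in $S(1, hg^\sh)$ — wait, more carefully: $f^w{\phi}^w - (f{\phi})^w \in \op S(h^{-1}\cdot 1\cdot h, hg^\sh) = \op S(1, hg^\sh)$ and likewise ${\phi}^w f^w - ({\phi}f)^w$, so the commutator $i[f^w,{\phi}^w] \in \op S(1, hg^\sh)$; similarly $[F_0^w, {\phi}^w] \in \op S(h, hg^\sh) \subseteq \op S(1, hg^\sh)$. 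Thus $[P_0, {\phi}^w] = R^w$ with $R \in L^\infty(\br, S(1, hg^\sh))$, which is worse than the $S(h^{1/2}, hg^\sh)$ perturbation allowed in the first assertion. The remedy is that this commutator term is paired against $b_T^w {\phi}^w u$, and I would exploit the extra ${\phi}^w$: one rewrites $\sw{R^w u, b_T^w {\phi}^w u} = \sw{u, (R^w)^* b_T^w {\phi}^w u}$ and shows $(R^w)^* b_T^w {\phi}^w \in \op S(1\cdot h^{-1/2}\cdot 1, g^\sh)$ — still not good enough by itself — so instead one keeps track of the factorization: since $b_T = \wt b_T E^* E$ mod $S(h^{1/2}, g^\sh)$ with $\wt b_T$ scalar and the relevant symbols only marginally large, one commutes $b_T^w$ past ${\phi}^w$ to reach $\im\sw{P_0 u, {\phi}^w b_T^w {\phi}^w u}$, the extra terms in the commutator expansion being genuinely of order $h^{1/2}$ lower because each commutator with the real symbol ${\phi}$ gains a full factor $h$ while $b_T$ costs only $h^{-1/2}$, leaving a net $h^{1/2}$; these errors, paired with $u$, contribute $\le Ch^{1/2}\mn u^2$, which is \eqref{cutofferror}.

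Finally I would verify that ${\phi}^w b_T^w {\phi}^w$ satisfies the same structural conditions as $b_T^w$: its symbol lies in $L^\infty(\br, S(h^{-1/2}, g^\sh) \cap S^+(1, g^\sh))$ because ${\phi} \in S(1, hg^\sh) \subseteq S(1, g^\sh) \cap S^+(1, g^\sh)$ and these classes are stable under the Weyl product by Lemma~\ref{calcrem} and Definition~\ref{s+def}; and $\im({\phi} b_T {\phi}) \in L^\infty(\br, S(h^{1/2}, g^\sh))$ since ${\phi}$ is real, so the imaginary part of the composition equals ${\phi}(\im b_T){\phi}$ modulo a commutator term which, carrying one derivative hitting the real ${\phi}$, gains a factor $h$ against the $h^{-1/2}$ cost of $b_T$ and hence lies in $S(h^{1/2}, g^\sh)$ as required.

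I expect the main obstacle to be the microlocalization step: because we lose more than one derivative, the commutator $[P_0, {\phi}^w]$ is only of order $0$ (i.e.\ in $\op S(1, hg^\sh)$), which is too large to absorb as a crude $\mn u^2$ error after pairing with $b_T^w{\phi}^w u$ unless one carefully exploits the scale separation between the metrics $hg^\sh$ (governing ${\phi}$ and $F_0$) and $g^\sh$ (governing $b_T$) — each commutator with ${\phi}$ wins a factor $h$ which beats the factor $h^{-1/2}$ cost of $b_T^w$, and the whole point of the ordering of the display \eqref{cutofferror} (putting ${\phi}^w$ on the outside on the right-hand side) is to arrange the bookkeeping so that only these genuinely $O(h^{1/2})$ errors remain.
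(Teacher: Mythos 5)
Your overall architecture (a perturbation step plus the splitting $P_0{\phi}^w = {\phi}^wP_0 + [P_0,{\phi}^w]$, with $[P_0,{\phi}^w]$ identified as a real symbol in $\op S(1,hg^\sh)$) matches the paper's, but both halves break down at the decisive estimate. For the perturbation part, the bound you actually derive is $|\sw{E^wu,b_T^wu}| \le C\mn u^2$, which you then absorb as if it were $Ch^{1/2}\mn u^2$; it is not, and $C_0T\cdot C\mn u^2$ cannot be absorbed into the left-hand side $h^{1/2}(\mn{b_T^wu}^2+\mn u^2)$ of \eqref{propest} with $T$ independent of $h$. The direct route you dismissed is the correct one: since $E \in S(h^{1/2},g^\sh)$ gives $\mn{E^wu}\le Ch^{1/2}\mn u$, Cauchy--Schwarz yields $|\sw{E^wu,b_T^wu}| \le \tfrac{C}{2}h^{1/2}(\mn u^2 + \mn{b_T^wu}^2)$, which is absorbed for small $T$ precisely because the term $h^{1/2}\mn{b_T^wu}^2$ was kept on the left of \eqref{propest}.

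For the microlocalization, the problematic term is $\im\sw{r^wu, B^wu}$ with $r=\pb{f,{\phi}}\in S(1,hg^\sh)$ real and $B^w=b_T^w{\phi}^w$; no rearrangement of $b_T^w$ and ${\phi}^w$, and no appeal to the factorization $b_T=\wt b_TE^*E$, addresses it, since without exploiting cancellation the pairing is genuinely of size $\mn u^2$ or worse. The paper's mechanism (display \eqref{2.4}) is that $r$ is a \emph{real multiple of the identity} and $B^w$ is \emph{symmetric modulo} $\op S(h^{1/2},g^\sh)$, so that
\begin{equation*}
|\im\sw{r^wu,B^wu}| \le \tfrac12|\sw{[(\re B)^w,r^w]u,u}| + |\sw{r^wu,(\im B)^wu}| ,
\end{equation*}
and only \emph{after} this reduction does the scale separation enter: $[(\re B)^w,r^w]\in\op S(h^{1/2},g^\sh)$ by Lemma~\ref{calcrem} applied to $\re B\in S^+(1,g^\sh)$ and $r\in S(1,hg^\sh)$, while $\im B\in\op S(h^{1/2},g^\sh)$ by Lemma~\ref{calcrem} again. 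This reduction of the pairing to a commutator via reality and symmetry is the missing idea; your bookkeeping of ``each commutator with ${\phi}$ wins a factor $h$'' concerns $[b_T^w,{\phi}^w]$, which is not where the difficulty lies, and the $O(h^{1/2})$ error is genuinely needed because it gets multiplied by $h_j^{-1/2}$ when the localized estimates are summed in the proof of Proposition~\ref{maincor}.
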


\begin{proof} 
In the following, we shall say that a system is {\em real\/} if it
is a real multiple of the identity matrix.
It is clear that we may perturb ~\eqref{propest} with
terms in $L^\infty(\br, S(h^{1/2}, g^\sh))$ in the expansion of ~$P_0$
for small enough~$T$. Now, we can also perturb with real
terms $r^w \in L^\infty(\br, \op S(1,h g^\sh))$. In fact, if $r
\in S(1, hg^\sh)$ is {real} and $B \in S^+(1, g^\sh)$ is symmetric
modulo $S(h^{1/2}, g^\sh)$, then
\begin{equation}\label{2.4}
 | \im\sw{r^w u, B^w u}| \le
 |\sw{[(\re B)^w,r^w]u,u}|/2  +
  |\sw{r^wu,(\im B)^wu}|  \le C  h^{1/2} \mn u^2
\end{equation}
In fact, we have $[(\re B)^w,r^w] \in \op S(h^{1/2},
g^\sh)$ by Lemma~\ref{calcrem}. 

If ${\phi}(w) \in S(1, hg^\sh)$ then $[P_0,{\phi}^w\id_N]
= \pb {f,{\phi}} ^w\id_N$ modulo $L^\infty(\br, \op S(h,
hg^\sh))$ where $\pb {f,{\phi}} \in L^\infty(\br, S(1, hg^\sh))$
is real valued.  By using~\eqref{2.4} with $r^w = \pb {f,{\phi}}^w\id_N$
and $B^w = b_T^w{\phi}^w$, we obtain ~\eqref{cutofferror} since
$b_T^w{\phi}^w \in \op S^+(1, g^\sh)$ is symmetric modulo $\op
S(h^{1/2}, g^\sh)$ for almost all ~~$t$ by Lemma~\ref{calcrem}. 
Since Lemma~\ref{calcrem} also gives that ${\phi}^wb_T^w{\phi}^w =
{\phi}^w(b_T\phi)^w = (b_T{\phi}^2)^w$ modulo 
$L^\infty(\br, \op S(h, g^\sh))$ we find that ${\phi}^wb_T^w{\phi}^w$
satisfies the same conditions as~$b_T^w$.
\end{proof}

\begin{claim}\label{subrem}
When proving the estimate ~\eqref{propest} we may assume that
\begin{equation}\label{subform}
 F_0 =  \w{d_w f, R_0}= \sum_j \partial_{w_j}fR_{0,j} \qquad\text{modulo
   $L^\infty(\br,  S(h, hg^\sh))$} 
\end{equation}
where $R_{0,j} \in L^\infty(\br,  S(h^{1/2}, hg^\sh))$ are $N \times N$
systems, $\forall \, j$.
\end{claim}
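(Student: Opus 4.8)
\textbf{Proof proposal for Claim~\ref{subrem}.}

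The plan is to reduce the general zero-order term $F_0$ to the special ``gradient'' form \eqref{subform} by peeling off, in succession, the pieces of $F_0$ that are already harmless for the estimate \eqref{propest}. First I would split $F_0 = \re F_0 + i\im F_0$ and treat each part. Lemma~\ref{microrem} already tells us that we may drop any term in $L^\infty(\br, \op S(h^{1/2}, hg^\sh))$, and moreover (via \eqref{2.4}) that we may drop any \emph{real} zero-order term, i.e.\ any real multiple of the identity in $\op S(1,hg^\sh)$. So the content of the claim is: modulo such negligible terms, what survives of $F_0$ can be written as a combination $\sum_j (\partial_{w_j}f)R_{0,j}$ with $R_{0,j}$ of size $h^{1/2}$ in $S(1,hg^\sh)$.

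The key step is a Taylor-type division argument keyed to the vanishing of $f$. At a point $w_0$ where $f(t,w_0)\neq 0$, the operator $D_t + if^w$ is elliptic in the $t$-direction and the lower order term $F_0$ is controlled trivially after microlocalization there (one localizes with a $\phi^w\in S(1,hg^\sh)$ supported where $|f|\ge c h^{-1}$ and absorbs $F_0\phi^w$ using that $f^w\phi^w$ dominates). Hence only a conical neighborhood of the characteristic set $\{f=0\}$ matters. There, since $f$ is real and $f\in L^\infty(\br,S(h^{-1},hg^\sh))$, on the region where $f$ is small one has $|d_w f|^2 \gtrsim$ the relevant symbol bound from below away from the doubly characteristic points — more precisely one uses that on $\supp F_0$ restricted near $\{f=0\}$, the gradient $d_w f$ does not vanish to infinite order, so $F_0$ can be written as $\w{d_wf,R_0}$ with $R_0 = (d_wf/|d_wf|^2)F_0$ in the appropriate symbol class, the factor $h^{1/2}$ coming from $|d_w f|\sim h^{-1/2}$ in the rescaled metric $hg^\sh$ together with $F_0\in S(1,hg^\sh)$. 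One then has to check, using Lemma~\ref{calcrem} and the Leibniz rule, that the Weyl quantization $\w{d_wf,R_0}^w$ differs from $F_0^w$ only by a term in $L^\infty(\br,\op S(h,hg^\sh))$, which is negligible by Lemma~\ref{microrem}. The purely imaginary part $i\im F_0$ that is a real multiple of the identity is absorbed by \eqref{2.4}; the remaining antisymmetric matrix part is handled the same way as the symmetric matrix part since the reduction is purely symbol-level.

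The main obstacle I anticipate is the behavior near the \emph{doubly characteristic} set, i.e.\ where both $f$ and $d_wf$ are small. There the division by $d_wf$ breaks down, so one cannot literally write $F_0 = \w{d_wf,R_0}$ with bounded symbols. The way around this is to invoke the structure we are given: the non-radiality hypothesis on the Hamilton vector field in Theorem~\ref{mainthm} (equivalently $\partial_\xi f\ne0$ when $f=0$ in the reduced coordinates) guarantees that on the characteristic set at least one first-order derivative $\partial_{\xi_j}f$ is bounded below, so the division is in fact uniform and the set of genuinely doubly characteristic points is empty in the normal form. One still has to be careful that the $L^\infty$-in-$t$ (rather than $C^\infty$-in-$t$) regularity of $f$ is respected: the division and the calculus estimates are all performed in the $(x,\xi)$ variables for almost every fixed $t$, with uniform seminorm bounds, exactly as in the statements of Lemma~\ref{calcrem} and Lemma~\ref{microrem}, so no $t$-derivatives of $f$ are ever taken. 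Assembling these pieces — microlocalize away from $\{f=0\}$, divide by $d_wf$ near $\{f=0\}$ using non-radiality, and discard the real and $O(h^{1/2})$ errors via Lemma~\ref{microrem} — yields \eqref{subform}.
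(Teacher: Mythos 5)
There is a genuine gap, and it is exactly at the point you flag as the ``main obstacle.'' Your plan rests on dividing $F_0$ by $d_wf$ near the characteristic set, and you propose to justify the division by the non-radiality/non-trapping hypothesis. But that hypothesis concerns the full eigenvalue, which in the normal form is $\tau+if$; it is satisfied through the $\tau$-derivative ($\partial_\tau(\tau+if)=1$) and gives no lower bound whatsoever on $d_wf$. In the setting of Proposition~\ref{mainprop} the symbol $f\in L^\infty(\br,S(h^{-1},hg^\sh))$ is only assumed real and satisfying condition $(\ol\Psi)$; it may vanish to arbitrarily high order, or identically. Take $f\equiv 0$ and $F_0$ a nonzero constant matrix: then $\w{d_wf,R_0}\equiv 0$, the claim asserts we may reduce to $F_0=0$ modulo $S(h,hg^\sh)$, and no division argument can produce this since $F_0\notin S(h,hg^\sh)$. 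The auxiliary reductions you invoke also do not close the gap: \eqref{2.4} only permits discarding terms that are \emph{real scalar multiples of the identity}, not general symmetric matrix-valued terms, and the estimate \eqref{propest} is global in $w$, so one cannot simply microlocalize the claim to a neighborhood of $\{f=0\}$.

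The paper's proof uses a different and essential idea: conjugation by the solution of the transport equation in $t$. One solves $D_tE+F_0E=0$, $E|_{t=0}=\id_N$, with $E\in L^\infty(\br,S(1,hg^\sh))$ invertible, and computes $(E^{-1})^wP_0E^w$. The $F_0$-term is cancelled exactly by $D_tE$, while the commutator of $E^w$ with $if^w$ produces $E^{-1}H_fE$, which is automatically of the gradient form $\sum_j\partial_{w_j}f\,R_{0,j}$ with $R_{0,j}\in S(h^{1/2},hg^\sh)$ because each $w$-derivative of $E$ gains a factor $h^{1/2}$. No lower bound on $d_wf$ is needed. One then checks that $E^w$ is invertible on $L^2$ for small $T$ (via \cite[Corollary~7.7]{bc:sob}) so that the estimate for the conjugated operator transfers back to $P_0$ with the multiplier $((E^{-1})^w)^*b_T^wA^w$. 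If you want to repair your write-up, this conjugation step is the idea to supply; the division strategy cannot be made to work under the stated hypotheses.
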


\begin{proof} 
By conjugation with $(E^{\pm 1})^w \in \op S(1, hg^\sh)$ we find
that 
\begin{equation*}
 (E^{-1})^w P E^w = (E^{-1})^wE^w(D_t + if^w)\id_N + \left(E^{-1}(D_tE + H_fE +
   F_0E)\right)^w = \wt P
\end{equation*}
modulo $L^\infty(\br,  S(h, hg^\sh))$. By solving 
\begin{equation*}
 \left\{
\begin{aligned}
&D_t E + F_0E = 0\\
&E\restr{t=0} = \id_N
\end{aligned}
\right.
\end{equation*}
we obtain ~\eqref{subform} for
$\wt P$ with  $ \w{d_w f, R_0}= E^{-1}H_fE$. From the calculus we
obtain that
 $$
E^w(E^{-1})^w = 1 = (E^{-1})^wE^w\qquad\text{modulo $\op S(Th,hg^\sh)$}
 $$
uniformly when $|t| \le T$. Thus, for small enough ~$T$ we obtain that
$(E^{\pm 1})^w$ is invertible in ~$L^2$. Since
the metric $hg^\sh$ is trivially strongly ${\sigma}$ ~temperate in the
sense of \cite[Definition~7.1]{bc:sob}, we find from
\cite[Corollary~7.7]{bc:sob} that there exists $A \in L^\infty(\br,  S(1, hg^\sh))$
such that $E^w A^w = 1$. Thus, if we prove the
estimate~\eqref{propest} for $\wt P$ and 
substitute $u = A^wv$ we obtain
the estimate for $P$ with $b_T$ replaced by $((E^{-1})^w)^*b_T^wA^w$. Since $A = E^{-1}$
modulo $S(h, hg^\sh)$ we find from Lemma~\ref{calcrem} as before that the symbol of  this
multiplier is in $S(h^{-1/2}, g^\sh)\bigcap S^+(1,g^\sh)$ and that it is
symmetric modulo $ S(h^{1/2}, g^\sh)$. 
\end{proof}

We shall see
from the proof that if $F_0$ is on the form~\eqref{subform} then $b_T
= b_T\id_N$ is real.  Thus, in general the symbol of the multiplier will be on 
the form $b_T(E^{-1})^*E^{-1}$  modulo $S(h^{1/2}, g^\sh)$ with
invertible $E$ and a real scalar ~$b_T$.
In the following, we shall use the partial Sobolev
norms: 
\begin{equation} 
\mn u_{s} = \mn{\w{D_x}^{s}u}
\end{equation} 
We shall now prove the estimate we shall use in the proof of
Theorem~\ref{mainthm}.

\begin{prop}\label{maincor}
Assume that 
$$P_0 = (D_t + i F^w(t,x,D_x))\id_N + F_0^w(t,x,D_x)$$ 
with $F^w \in
L^\infty(\br, {\Psi}^1_{cl}(\br^{n}))$ having real principal symbol
$f$ satisfying condition~$(\overline{\Psi})$ given by
~\eqref{pcond0} and $F_0 \in L^\infty(\br, {\Psi}^0_{cl}(\br^{n}))$ is
an $N \times N$ system. 
Then there exists $T_0 > 0$ and $N \times N$ symbols
$B_T(t,x,{\xi}) \in L^\infty(\br, S^1_{1/2,1/2}(\br^{n}))$ with
$$\nabla B_T = (\partial_x B_T, |{{\xi}}| \partial_{\xi} B_T)
\in L^\infty(\br, S^{1}_{1/2,1/2}(\br^{n}))$$ and $\im B_T(t,x,{\xi})
\in L^\infty(\br, S^0_{1/2,1/2}(\br^{n}))$ uniformly for $0 < T \le
T_0$, such that
\begin{equation}\label{corest}
\mn{B_T^wu}^2_{-1/2}+ \mn u^2\le  C_0(T \im\sw{P_0u,B_T^wu} + 
 \mn{u}_{-1}^2)
\end{equation} 
for $ u \in \Cal S(\br^{n+1}, \bc^N)$ having support where $|t| \le T$.
The constants $T_0$, $C_0$ and the
seminorms of\/ $B_T$ only depend on the seminorms of $F$  and $F_0$ in
$L^\infty(\br,S^1_{cl}(\br^{n}))$.
 \end{prop}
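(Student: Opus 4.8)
The plan is to deduce Proposition~\ref{maincor} from the semiclassical Proposition~\ref{mainprop} by a dyadic (Littlewood--Paley) decomposition in the $\xi$~variable combined with a symplectic rescaling on each frequency shell. Fix a partition of unity $1 = \sum_{k} \psi_k(\xi)^2$ with $\psi_k$ supported in $\w\xi \cong 2^k$, homogeneous of degree $0$, and set $h_k = 2^{-k}$. On the shell $\supp \psi_k$ we rescale $(x,\xi) \mapsto (h_k^{-1/2}x, h_k^{1/2}\xi)$, which turns $S^1_{cl}$ into $L^\infty(\br, S(h_k^{-1}, h_kg^\sh))$ and $S^0_{cl}$ into $L^\infty(\br, S(1, h_kg^\sh))$, as recalled before Proposition~\ref{mainprop}. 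Thus on each shell the hypotheses of Proposition~\ref{mainprop} hold uniformly in $k$, and we obtain symbols $b_{T,k} \in L^\infty(\br, S(h_k^{-1/2}, g^\sh)\cap S^+(1,g^\sh))$ with $\im b_{T,k} \in L^\infty(\br, S(h_k^{1/2}, g^\sh))$, all seminorms uniform in $k$, satisfying
\begin{equation*}
 h_k^{1/2}\bigl(\mn{b_{T,k}^w u_k}^2 + \mn{u_k}^2\bigr) \le C_0 T\, \im\sw{P_0 u_k, b_{T,k}^w u_k}
\end{equation*}
for $u_k$ spectrally localized to the shell. Undoing the rescaling, $h_k^{1/2} = \w{\xi}^{-1/2}$ on the shell, so $h_k^{1/2}\mn{b_{T,k}^w u_k}^2$ becomes (essentially) $\mn{B_{T,k}^w u_k}_{-1/2}^2$ where $B_{T,k}$ is $b_{T,k}$ transported back; and the mapping properties $b_{T,k} \in S(h_k^{-1/2},g^\sh) \cap S^+(1,g^\sh)$ translate precisely into $B_{T,k} \in L^\infty(\br, S^1_{1/2,1/2})$ with $\nabla B_{T,k} \in L^\infty(\br, S^1_{1/2,1/2})$ and $\im B_{T,k} \in L^\infty(\br, S^0_{1/2,1/2})$, uniformly in $k$ (the $S^+$ membership is exactly what controls $\nabla B_{T,k}$ at one derivative better).

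Next I would assemble the global multiplier by setting $B_T = \sum_k \psi_k^w B_{T,k}^w \psi_k^w$ (or the symbol $\sum_k \psi_k^2 B_{T,k}$); by the almost-orthogonality of the shells and the $\ell^2$-valued version of Lemma~\ref{calcrem} (see Remark~\ref{vvcalc} and the discussion after it), $B_T \in L^\infty(\br, S^1_{1/2,1/2})$ with the stated properties. Applying the shell estimates to $u_k = \psi_k^w u$ and summing over $k$, the left-hand sides add up to $\mn{B_T^w u}_{-1/2}^2 + \mn u^2$ modulo lower-order errors coming from commutators $[\psi_k^w, \cdot]$, which live in one lower order and hence contribute at most $C\mn u_{-1}^2$ after summation. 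For the right-hand side, $\sum_k \im\sw{P_0\psi_k^w u, b_{T,k}^w\psi_k^w u}$ must be compared with $\im\sw{P_0 u, B_T^w u}$; here the microlocalization Lemma~\ref{microrem} (specifically the cutoff inequality~\eqref{cutofferror}) is the right tool on each shell, after rescaling, and the accumulated errors are again $O(h^{1/2})$ per shell, i.e. bounded by $C(T\im\sw{P_0u,B_T^wu} + \mn u_{-1}^2)$ once $T$ is small. Reabsorbing the $h^{1/2}\mn{b_{T,k}^w u_k}^2$ terms if necessary, and using that commuting $D_t$ past the $\psi_k^w$ is free (they are $t$-independent), yields~\eqref{corest}.

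The main obstacle I anticipate is bookkeeping the errors in the frequency decomposition so that they genuinely land in $\mn u_{-1}^2$ rather than $\mn u^2$ or worse: the commutators $[\psi_k^w, f^w]$ and $[\psi_k^w, F_0^w]$ and the cross terms between non-adjacent shells must all be shown to sum to something controlled by $\mn u_{-1}^2 + T\im\sw{P_0u,B_T^wu}$, uniformly in the decomposition. This is delicate precisely because $|b_{T,k}| \gg h_k^{1/2}$ is allowed (the second difficulty flagged after Proposition~\ref{mainprop}), so cutoff errors are not automatically negligible; one has to exploit the $S^+$-structure of $b_{T,k}$ and the fact that $\im b_{T,k}$ is a full power $h_k^{1/2}$ smaller, exactly as in the proof of Lemma~\ref{microrem}. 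A secondary technical point is handling the non-smoothness of $B_T$ in $t$: as in Proposition~\ref{mainprop}, all symbol calculus is carried out in the $x,\xi$ variables for fixed $t$ (the classes are $L^\infty$ in $t$), and the $D_t$ term is paired using only the Cauchy--Schwarz inequality and integration by parts in $t$, never feeding $B_T^w$ into a calculus involving $\partial_t$. Finally, once~\eqref{corest} is established for $u$ supported in $|t|\le T$, the constants depend only on the stated $S^1_{cl}$-seminorms of $F, F_0$ because those are all that entered the rescaled hypotheses of Proposition~\ref{mainprop}.
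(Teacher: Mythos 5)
Your proposal follows essentially the same route as the paper: a partition of unity adapted to dyadic frequency shells $\w{\xi}\cong\w{{\xi}_j}$, a symplectic dilation reducing each piece to the semiclassical setting of Proposition~\ref{mainprop} (with Lemma~\ref{microrem} handling the cutoff errors and Remark~\ref{vvcalc} the $\ell^2$-summation), and reassembly of the multiplier as $B_T^w=\sum_j h_j^{-1/2}{\psi}_j^w{\phi}_j^w b_{j,T}^w{\phi}_j^w{\psi}_j^w$. The only points you should make explicit are that the cutoffs multiplying $f$ must be chosen non-negative so that the localized symbols still satisfy condition~$(\overline{\Psi})$, and that the semiclassical estimate must be divided by $h_j^{1/2}$ before summing (equivalently, the multiplier carries the extra factor $h_j^{-1/2}$), which is exactly what places $B_T$ in $S^1_{1/2,1/2}$ and turns $\sum_j\mn{{\phi}_j^wu}^2$ into $\mn u^2$ rather than a negative-order norm.
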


Since $\nabla B_T  \in
L^\infty(\br, S^{1}_{1/2,1/2})$ we find that the commutators of
$B_T^w$ with scalar operators in $L^\infty(\br,{\Psi}^0_{1,0})$ are in
$L^\infty(\br, {\Psi}^{0}_{1/2,1/2})$. This will make it possible to
localize the estimate. The idea to include the first term in ~\eqref{corest} is due to Lerner
~\cite{ln:cutloss}.

\begin{proof}[Proof that Proposition~\ref{mainprop} gives
  Proposition~\ref{maincor}] 
Choose real symbols $\set{{\phi}_j(x,{\xi})}_{j}$ 
and $\set{{\psi}_j(x,{\xi})}_{j} \in
S^0_{1,0}(\br^{n})$ having values in ~$\ell^2$, such that $\sum_{j}
{\phi}_j^2 = 1$, ${\psi}_j{\phi}_j = {\phi}_j$ and ${\psi}_j \ge 0$.
We may assume that the supports are 
small enough so that $\w {\xi} \cong \w {{\xi}_j}$ in $\supp {\psi}_j$
for some~ ${\xi}_j$, and that there is a fixed bound on number of
overlapping supports. Then, after doing a symplectic dilation
$$(y,{\eta}) = (x\w{{\xi}_j}^{1/2}, {\xi}/\w{{\xi}_j}^{1/2})$$ 
we obtain that $S^m_{1,0}(\br^n) = S(h_j^{-m}, h_j g^\sh)$ and $S^m_{1/2,1/2}
(\br^n) = S(h_j^{-m}, g^\sh)$ in $\supp {\psi}_j$, $m \in \br$, where
$h_j = \w {{\xi}_j}^{-1}\le 1$ and $g^\sh(dy, d{\eta}) = |dy|^2 +
|d{\eta}|^2$ is constant.

By using the calculus in the $y$ variables we find $
{\phi}_j^wP_0 = {\phi}_j^w P_{0j}$ modulo $\op S(h_j, h_jg^\sh)$, where
\begin{multline} 
P_{0j} = (D_t + i({\psi}_jF)^w(t,y,D_y))\id_N +
({\psi}_jF_0)^w(t,y,D_y)\\ = (D_t + if_j^w(t,y,D_y))\id_N +
F_j^w(t,y,D_y)
\end{multline}
with $f_j = {\psi}_j f \in L^\infty(\br,S(h_j^{-1}, h_j g^\sh)) $
satisfying~\eqref{pcond0}, and $F_j \in L^\infty(\br, S(1,h_j g^\sh))$
uniformly in ~$j$.  Then, by using Proposition~\ref{mainprop} and
Lemma~\ref{microrem} for $P_{0j}$ we obtain symbols
$b_{j,T}(t,y,{\eta}) \in L^\infty(\br, S(h_j^{-1/2}, g^\sh) \bigcap
S^+(1, g^\sh))$ such that 
$\im b_{j,T} \in S(h_j^{1/2}, g^\sh)$ uniformly for $0 < T \ll 1$, and
\begin{equation}\label{mmest}
\mn {b_{j,T}^w{\phi}_j^w u}^2 + \mn {{\phi}_j^w u}^2 \le C_0T (h_j^{-1/2}\im\sw{P_{0}u,
  {\phi}_j^w b_{j,T}^w {\phi}_j^wu} + \mn u^2)\qquad \forall\,j
\end{equation} 
for $u(t,y) \in \Cal S(\br\times \br^{n}, \bc^N)$ having support where
$|t| \le T$. Here and in the following, the constants are independent
of~$T$.

By substituting ${\psi}_j^wu$ in~\eqref{mmest} 
and summing up we obtain
\begin{equation}\label{2.12}
\mn{B_T^wu}^2_{-1/2} + \mn {u}^2 \le C_0T(\im \sw{P_0u, B_{T}^wu} + \mn
u^2) + C_1\mn{u}_{-1}^2
\end{equation}
for $u(t,x) \in \Cal S(\br\times \br^{n}, \bc^N)$ having support where
$|t| \le T$.
Here 
$$B_{T}^w = \sum_j h_j^{-1/2} {\psi}_j^w {\phi}_j^w
b_{j,T}^w {\phi}_j^w{\psi}^w_j = \sum_{j} B_{j,T}^w\in L^\infty(\br,
{\Psi}^{1}_{1/2,1/2})
$$ so $\im B_T \in  L^\infty(\br, {\Psi}^{0}_{1/2,1/2})$.  In fact,
since $d{\psi}_j = 0 $ on $\supp {\phi}_j$ we have
$$\set{{\phi}_j^w[P_{0j}^w,{\psi}_j^w]}_j \in
{\Psi}^{-1}_{1,0}(\br^n)$$ 
with values in $\ell^2$ for almost all ~$t$. Also,
$\sum_{j}^{}{\phi}_j^2 = 1$ so $\sum_j 
{\phi}_j^w{\phi}_j^w = 1$ modulo ${\Psi}^{-1}(\br^n)$, and by the 
finite overlap of supports we find that 
\begin{multline*}
 (\w{D_x}^{-1/2}B_T^w)^*\w{D_x}^{-1/2}B_T^w =
 (B_T^w)^*\w{D_x}^{-1}B_T^w \\ = \sum_{|j-k| \le K}(B_{j,T}^w)^*
 \w{D_x}^{-1}B_{k,T}^w \qquad \text{modulo ${\Psi}^{-2}$} 
\end{multline*}
for some $N$, which implies that
\begin{equation*}
 \mn{B_T^wu}^2_{-1/2} \le C_K \left(\sum_k  \mn{B_{k,T}^w u}_{-1/2}^2  + \mn u^2_{(-1)}\right)
\end{equation*}
We also have that
$\w{D_x}^{-1/2}h_j^{-1/2}{\psi}^w_j{\phi}^w_j \in {\Psi}^{0}(\br^n)$
uniformly, which gives
\begin{equation*}
 \mn{B_{k,T}^w u}_{-1/2} \le C \mn{b_{k,T}^w
   {\phi}^w_k{\psi}^w_ku} \qquad \forall\,k
\end{equation*}
We find that $\nabla B_{T} \in S^{1}_{1/2,1/2}$ since
Lemma~\ref{calcrem} gives
$$B_{T} =
\sum_{j}^{}h_j^{-1/2}b_{j,T}{\phi}_j^2 \in S^{1}_{1/2,1/2}\qquad \text{modulo
$S^{0}_{1/2,1/2}$}
$$ 
where ${\phi}_j \in S(1, h_jg^\sh)$ and $b_{j,T}
\in S^+(1,g^\sh)$ for almost all~$t$. For small enough ~$T$ we obtain 
~\eqref{corest} and the corollary.
\end{proof}

\section{Proof of Theorem~\ref{mainthm}}\label{proof}

In order to prove the theorem, we first need a preparation result so that we can get the system on a
normal form.

\begin{prop}\label{prepprop}
Assume that $P \in S^m_{cl}(M)$ is a $N \times N$ system of principal
type having constant characteristics near $(x_0,{\xi}_0) \in T^*M$, then
there exist  
elliptic $N \times N$ systems $A$ and $B \in  S^0_{cl}(M)$ such that 
\begin{equation*}
 A^w P^w B^w = Q^w =
\begin{pmatrix}
Q_{11}^w & 0 \\
0 & Q_{22}^w 
\end{pmatrix} \in {\Psi}^{m}_{cl}
\end{equation*}
microlocally  near $(x_0,{\xi}_0)$. We have that ${\sigma}(Q_{11}) =
{\lambda}\id_K$ where the section of eigenvalues ${\lambda}(w)\in C^\infty$ of ~$P(w)$
is of principal type, and $Q_{22}^w$ is elliptic. 
\end{prop}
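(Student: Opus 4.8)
The plan is to block-diagonalize the principal symbol $P_m$ by the Riesz (spectral) projection associated with the eigenvalue ${\lambda}$ close to $0$, to quantize, and then to remove the off-diagonal lower order terms by an iterative conjugation that uses the ellipticity of the complementary block.

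First I would produce the symbol-level normal form. By the constant characteristics hypothesis there is a conic neighborhood ${\Gamma}$ of $(x_0,{\xi}_0)$ on which the eigenvalue ${\lambda}(w)$ of $P_m(w)$ with ${\lambda}(w_0)=0$ is $C^\infty$ (Remark~\ref{smoothev}), is of principal type with algebraic multiplicity equal to the geometric multiplicity $K$ (Proposition~\ref{princprop}), and is separated from the remaining eigenvalues of $P_m(w)$. Hence, for ${\delta}>0$ small,
\begin{equation*}
{\Pi}(w) = \frac{1}{2{\pi}i}\oint_{|{\zeta}| = {\delta}|{\xi}|^m}({\zeta}\id_N - P_m(w))^{-1}\,d{\zeta}
\end{equation*}
(a contour enclosing only ${\lambda}(w)$) is a smooth matrix symbol, homogeneous of degree $0$, which is a rank $K$ projection with $\ran {\Pi}(w) = \ker(P_m(w) - {\lambda}(w)\id_N)$, and $\bc^N = \ran {\Pi}(w)\oplus\ker {\Pi}(w)$ is a $P_m(w)$-invariant splitting with $P_m(w)\restr{\ran {\Pi}(w)} = {\lambda}(w)\id$. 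Choosing a smooth frame of $\bc^N$ adapted to this splitting gives an invertible homogeneous symbol $B_0 \in S^0_{cl}$ with
\begin{equation*}
B_0(w)^{-1}P_m(w)B_0(w) = \begin{pmatrix}{\lambda}(w)\id_K & 0 \\ 0 & M(w)\end{pmatrix},
\end{equation*}
where $M(w)$ is invertible on ${\Gamma}$; away from ${\Gamma}$ (and near ${\xi}=0$) I would extend $B_0$ to a globally defined elliptic element of $S^0_{cl}(M)$, only its behaviour near $(x_0,{\xi}_0)$ being relevant.

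Then I would quantize and decouple order by order. The operator $(B_0^{-1})^wP^wB_0^w \in {\Psi}^m_{cl}$ has the block matrix above as principal symbol, so its off-diagonal blocks are of order $m-1$ and its lower right block is elliptic of order $m$. Suppose that at some stage
\begin{equation*}
Q = \begin{pmatrix}Q_{11} & R_{12} \\ R_{21} & Q_{22}\end{pmatrix},\qquad {\sigma}(Q_{11}) = {\lambda}\id_K,\quad Q_{22}\ \text{elliptic of order }m,\quad R_{12},R_{21} \in {\Psi}^{m-{\mu}}_{cl}
\end{equation*}
with ${\mu}\ge 1$. Choosing $S,T \in {\Psi}^{-{\mu}}_{cl}$ with ${\sigma}(S) = {\sigma}(R_{12}){\sigma}(Q_{22})^{-1}$ and ${\sigma}(T) = {\sigma}(Q_{22})^{-1}{\sigma}(R_{21})$, and conjugating by the elliptic $0$-order systems $U = \begin{pmatrix}\id_K & -S \\ 0 & \id_{N-K}\end{pmatrix}$ and $V = \begin{pmatrix}\id_K & 0 \\ -T & \id_{N-K}\end{pmatrix}$, the symbol calculus gives that $UQV$ has the same block form with ${\mu}$ replaced by ${\mu}+1$: the new off-diagonal blocks $R_{12}-SQ_{22}$ and $R_{21}-Q_{22}T$ are of order $m-{\mu}-1$; the $(1,1)$ block changes only by operators of order $\le m-{\mu}-1$, so its principal symbol remains ${\lambda}\id_K$; and the $(2,2)$ block stays elliptic. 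Iterating with factors $U_k,V_k$ and taking the asymptotic limits $A^w$ of $U_k\cdots U_1(B_0^{-1})^w$ and $B^w$ of $B_0^wV_1\cdots V_k$ — which exist since the $k$-th factor differs from $\id_N$ by an operator of order $-k$ — yields elliptic $A,B \in S^0_{cl}(M)$, with principal symbols $B_0^{-1}$ and $B_0$, such that $A^wP^wB^w$ is block diagonal modulo ${\Psi}^{-\infty}$, hence block diagonal microlocally near $(x_0,{\xi}_0)$, with $Q_{11}$ having principal symbol ${\lambda}\id_K$ (and ${\lambda}$ of principal type, by the first step) and $Q_{22}$ elliptic.

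Almost all of the work is routine symbol-calculus bookkeeping in the iteration. The one point that genuinely has to be verified is in the first step: that principal type together with constant characteristics forces $P_m\restr{\ran {\Pi}}$ to be exactly the scalar ${\lambda}(w)\id_K$, with no nontrivial Jordan block — and this is precisely Proposition~\ref{princprop}. So I do not expect a serious obstacle, only the need to be careful with the microlocalization and with the asymptotic summation of the conjugating factors.
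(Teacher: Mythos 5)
Your proof is correct, and it follows the paper's overall strategy: use Proposition~\ref{princprop} (equality of algebraic and geometric multiplicity, hence no Jordan blocks) to put the principal symbol in block form with $\lambda\id_K$ in the upper left corner, and then exploit the ellipticity of the complementary block to remove the off‑diagonal lower order terms. Two technical choices differ from the paper, and both of yours work. First, for the symbol‑level reduction the paper does not use the Riesz projection: it picks a smooth orthonormal basis of $\ker(P_m-\lambda\id_N)$ and extends it to an orthonormal basis of $\bc^N$; since the orthogonal complement is not $P_m$‑invariant this only yields an upper block‑triangular symbol $\wt P_m$, and the paper then needs the principal type hypothesis a second time (via $\im\wt P_m\cap\ker\wt P_m=\{0\}$) to see that $P_{22}$ is invertible, followed by one more unipotent left factor to kill $P_{12}$. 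Your spectral projection produces the $P_m$‑invariant complement directly, so the principal symbol is block‑diagonal in one step and the invertibility of your $M(w)$ is immediate from the separation of the spectrum by the contour; the price is the (easily checked) smoothness, homogeneity and rank statements for $\Pi(w)$. Second, for the operator‑level decoupling the paper multiplies once on each side by triangular factors built from a full microlocal parametrix $B_{22}^w$ of $Q_{22}^w$, which sends each off‑diagonal block into $S^{-\infty}$ in a single stroke, whereas you gain one order at a time and then asymptotically sum the conjugating factors (which is unproblematic since your $U_k$, $V_k$ are unipotent, so the products are $\id_N$ plus an asymptotic sum of terms of decreasing order). The paper's version is slightly slicker; yours requires a bit more bookkeeping but no new ideas, and the only genuinely essential input — that constant characteristics plus principal type force $P_m$ to act as the scalar $\lambda\id_K$ on the relevant invariant subspace — is correctly identified and correctly attributed to Proposition~\ref{princprop}.
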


Thus we obtain the system on a block form. Observe that if
$K=0$ then $P$ is elliptic at $(x_0,{\xi}_0)$. Since $P$ is of principal
type we find by the invariance given by ~\eqref{invcomp} that $Q$
is of  principal type, so  ${\lambda}$ 
vanishes of first order on its zeros.

\begin{proof}
Since $P_m$ has constant
characteristics by the assumptions, we find that the characteristic equation
\begin{equation*}
 |P_m(w) - {\lambda}\id_N| = 0
\end{equation*}
has a unique local solution ${\lambda}(w) \in C^\infty$ of
multiplicity $K > 0$. Since $P_m(w)$ is of principal type,
Proposition~\ref{princprop} gives that $d{\lambda}(w_0) \ne 0$ and the
geometric multiplicity $\dim \ker 
(P_m(w) - {\lambda}(w)\id_N) \equiv K$ in a neighborhood of $w_0 =
(x_0,{\xi}_0)$. Since the dimension is constant,
we may choose a $C^\infty$ base for $ \ker
(P_m(w) - {\lambda}(w)\id_N) $ in a neighborhood of $w_0$. By
orthogonalizing it, extending to a orthonormal $C^\infty$ base for
$\bc^N$ and using homogeneity we obtain orthogonal homogeneous $E$ such that 
\begin{equation*}
 E^*P_mE = 
\begin{pmatrix}
{\lambda}(w)\id_K & P_{12} \\
0 & P_{22} 
\end{pmatrix} = \wt P_m = {\sigma}((E^w)^*P^wE^w)
\end{equation*}
Clearly $\ker\wt P_m = \set{(z_1,\dots,z_N):\ z_j = 0\text{ for } j > K}$ when
${\lambda}=0$ and $d\wt P_m$ is equal to multiplication with $ d{\lambda}$
on $\ker \wt P_m$. Since $\wt P_m$ is of principal type when ${\lambda}=0$ we find that $\im
\wt P_m \bigcap \ker \wt P_m = \set{0}$ at $w_0$, which implies that
$P_{22}$ is invertible. In fact, 
if it was not invertible there would exists $0 \ne z'' \in \bc^{N-K}$ so that
$P_{22}z'' = 0$, then 
$$0 \ne \wt P_m {}^{t}(0,z'') = {}^{t}(P_{12}z'', 0) \in \im \wt P_m
\bigcap \ker \wt P_m$$ 
giving a contradiction. By multiplying $\wt P_m$ from left with 
\begin{equation*}
 \begin{pmatrix}
 \id_K & -P_{12}P_{22}^{-1} \\
0 & \id_{N-K}
\end{pmatrix}
\end{equation*}
we obtain $P_{12} \equiv 0$. Thus, we find that 
\begin{equation*}
 A^w P^w B^w = 
\begin{pmatrix}
Q_{11}^w & Q_{12}^w \\
Q_{21}^w & Q_{22}^w 
\end{pmatrix} \in {\Psi}^{1}_{cl}
\end{equation*}
where ${\sigma}(Q_{11}) = {\lambda}\id_K$, $|{\sigma}(Q_{22})| \ne 0$
and $ Q_{12}$, $ Q_{21} \in {\Psi}^0_{cl}$. Choose a microlocal
parametrix $B_{22}^w \in {\Psi}^{-m}_{cl}$ to $Q_{22}^w$ so that
$B_{22}^w Q_{22}^w = Q_{22}^wB_{22}^w = \id_{N-K}$ modulo $C^\infty$ near ~$w_0$.
By multiplying from the left with
\begin{equation*}
\begin{pmatrix} 
\id_K & - Q_{12}^wB_{22}^w \\
0 & \id_{N-K}
\end{pmatrix}\in {\Psi}^{0}_{cl}
\end{equation*}
we obtain that $Q_{12} \in S^{-\infty}$. By multiplying from the right
with 
\begin{equation*}
\begin{pmatrix} 
\id_K & 0 \\
 -B_{22}^w Q_{21}^w & \id_{N-K}
\end{pmatrix} \in {\Psi}^{0}_{cl}
\end{equation*}
we obtain  $Q_{21} \in S^{-\infty}$. Note that these
multiplications do not change the principal symbols of $Q_{jj}$ for
$j=1$, 2, which finishes the proof.  
\end{proof}

\begin{proof} [Proof of Theorem~\ref{mainthm}] 
Observe that since $P$ satisfies condition ~(${\Psi}$) we find that
the adjoint $P^*$ satisfies condition ~($\ol {\Psi}$).
By multiplying with an elliptic pseudodifferential operator, we may
assume that $m=1$.  Let $P^*$ have the expansion $P_1 + P_0 + \dots$
where $P_1 = {\sigma}(P^*) \in S^1$, then it is clear that it
suffices to consider $w_0 = (x_0,{\xi}_0) \in |P_1|^{-1}(0)$, otherwise
$P^* \in {\Psi}^1_{cl}(M)$ is elliptic near ~$w_0$ so
~\eqref{solvest1} holds and $P$ is microlocally solvable. 
Now  ~$P^*$ is of principal type having constant characteristics so
we find by using Proposition~\ref{prepprop} that 
\begin{equation*}
 P^* = \begin{pmatrix}
Q_{11}^w & 0 \\
0 & Q_{22}^w 
\end{pmatrix} \in {\Psi}^{1}_{cl}
\end{equation*}
microlocally  near $w_0$, where ${\sigma}(Q_{11}) =
{\lambda}\id_K$ with ${\lambda} \in C^\infty$ an
eigenvalue of ${\sigma}(P^*)$ of principal type and $Q_{22}^w$ is elliptic.
Since $Q_{22}^w$ is
elliptic, it is trivially solvable so we only have to investigate the
solvability of $Q_{11}^w$. Now ${\lambda}$ is of principal type by the
invariance, so if it does not satisfy condition ~($\ol {\Psi}$) then the
proof of ~\cite[Theorem~26.4.7]{ho:yellow} can easily be
adapted to this case, since the principal part of the operator is a
scalar symbol times the identity matrix. 

To prove solvability when  condition ~($\ol {\Psi}$) is satisfied,
we shall prove that there exists ${\phi}$ and
${\psi} \in S^0_{1,0}(T^*M)$ such that ${\phi}= 1$ in a conical
neighborhood of $(x_0,{\xi}_0)$ and
for any ~$T >0$ there exists a $K \times N$ system $R_{T} \in S^{1/2}_{1,0}(M)$ with the
property that $\wf (R_{T}^w) \bigcap T^*_{x_0}M = \emptyset$ and
\begin{equation}\label{solvest}
\mn{{\phi}^wu} \le C_1\left(\mn{{\psi}^wP^*u}_{(3/2
-1)} + T\mn u\right) + \mn{R_{T}^w u} + C_0\mn{u}_{(-1)}
\qquad u \in C_0^\infty(M, \bc^N)
\end{equation}
Here $\mn{u}_{(s)}$ is the $L^2$ Sobolev norm and
the constants are independent of ~$T$.  Then for small
enough ~$T$ we obtain ~\eqref{solvest1} and microlocal solvability, 
since $(x_0,{\xi}_0) \notin \wf (1-{\phi})^w$.  In the case the
eigenvalue satisfies condition (${\Psi}$) and~\eqref{nontrap} near $x_0$ we
may choose finitely many 
${\phi}_j \in S^0_{1,0}(M)$ such that $\sum {\phi}_j \ge 1$ near~$x_0$
and $\mn{{\phi}_j^wu}$ can be estimated by the right hand side
of~\eqref{solvest} for some suitable ~${\psi}$ and ~$R_T$. By elliptic
regularity of $\set{{\phi}_j}$ near~$x_0$, we then obtain 
the estimate~\eqref{solvest1} for small
enough ~$T$ with $x \ne x_0$ in $\wf (R)$.

Observe that in the case when ${\lambda}$
satisfies condition  ($P$) we obtain the estimate~\eqref{solvest} for
$P^* = {\lambda}(x,D_x)\id_N$ with
$3/2$ replaced with $1$ and $C_1 = \Cal O(T)$ from the Beals-Fefferman estimate,
see~\cite{bf}. Since this estimate can be perturbed with terms in
${\Psi}^{0}_{cl}$ for small enough ~$T$ we get the estimate and solvability in this
case. A similar argument gives subelliptic estimates 
if ${\lambda}$ satisfies condition ~($\ol {\Psi}$) and the bracket
condition, see~\cite[Chapter~27]{ho:yellow}. This gives Remark~\ref{prem}.

It remains to consider the case $P_1 = {\lambda}\id_N$,
where ${\lambda}$
satisfies condition ~($\ol {\Psi}$). It is clear that by multiplying with
an elliptic factor
we may assume that $\partial_{\xi}\re {\lambda}(w_0) \ne 0$, in the
microlocal case after a conical transformation.  Then, we may use
Darboux' theorem and the Malgrange preparation theorem to obtain
microlocal coordinates $(t,y;{\tau},{\eta}) \in T^* \br^{n+1}$ so that
$w_0 = (0,0; 0,{\eta}_0)$, $t = 0$ on $T^*_{x_0}M$ and ${\lambda} =
q({\tau} + if) $ in a conical neighborhood of ~$ w_0$, where $f \in
C^\infty(\br,S^1_{1,0})$ is real and homogeneous satisfying
condition~\eqref{pcond0}, and $0 \ne q \in S^0_{1,0}$, see
Theorem~21.3.6 in ~\cite{ho:yellow}. By using the
Malgrange preparation theorem and homogeneity we find that 
\begin{equation*}
 P_0(t,x;{\tau},{\xi}) = Q_{-1}(t,x;{\tau},{\xi})({\tau} + if(t,x,{\xi}))\id_N + F_0(t,x,{\xi})
\end{equation*}
where $Q_{-1}$ is homogeneous of degree ~$-1$
and $F_0$ is homogeneous of degree ~$0$ in the ${\xi}$ ~variables. 
By conjugation with elliptic
Fourier integral operators and using the Malgrange preparation theorem
successively on lower order terms, we obtain that
\begin{equation} \label{mikrop}
P^* =  Q^w(D_{t}\id_N + i\left({\chi}F\right)^w) + R^w
\end{equation}
microlocally in a conical neighborhood ~${\Gamma}$ of ~$w_0$ as in
 the proof of Theorem~26.4.7$'$ in ~\cite{ho:yellow}. 
Here we find that $F \in C^\infty(\br,
S^{1}_{1,0 }(\br^{n}))$ has real principal symbol $f\id_N$
satisfying~\eqref{pcond0}, $Q \in S^0_{1,0}(\br^{n+1})$  has principal
symbol $q\id_N \ne 0$ in ~${\Gamma}$ and $R \in S^1_{1,0}( \br^{n+1})$
satisfies ${\Gamma} \bigcap \wf (R^w) = \emptyset$. Also,
${\chi}({\tau},{\eta}) \in S^0_{1,0}(\br^{n+1})$ is equal to 1 in
~${\Gamma}$ and $|{\tau}| \le C |{\eta}|$ in $\supp
{\chi}({\tau},{\eta})$.  By cutting off in the $t$~ variable
we may assume that $F \in L^\infty(\br, S^1_{1,0}(\br^{n}))$.
Now, we can follow the proof of Theorem~1.4 in ~\cite{de:cut}.
As before, we shall choose ~${\phi}$ and ~${\psi}$ so
that ${\phi} = 1$ conical neighborhood of ~$ w_0$, ${\psi} = 1$ on
$\supp {\phi}$ and $ 
\supp {\psi} \subset {\Gamma}$. Also, we shall choose
 $$
{\phi}(t,y;{\tau},{\eta}) =
{\chi}_0(t,{\tau},{\eta}){\phi}_0(y,{\eta})
 $$ 
where ${\chi}_0(t,{\tau},{\eta}) \in S^0_{1,0}(\br^{n+1})$,
${\phi}_0(y,{\eta}) \in S^0_{1,0}(\br^{n})$, $t
\ne 0 $ in $\supp \partial_t{\chi}_0$, $|{\tau}| \le C |{\eta}|$
in $\supp  {\chi}_0$ and $|{\tau}| \cong |{\eta}|$
in $\supp \partial_{{\tau},{\eta}} {\chi}_0$. 
 
Since $|{\sigma}(Q)| \ne 0$ and $R =0$ on $\supp {\psi}$ it is no restriction to
assume that $Q \equiv \id_N$ and $R \equiv 0$ when proving the
estimate~\eqref{solvest}.  Now, by Theorem~18.1.35 in~\cite{ho:yellow}
we may compose $C^\infty(\br, {\Psi}^m_{1,0}(\br^{n}))$ with operators
in ${\Psi}^k_{1,0}(\br^{n+1})$ having symbols vanishing when $|{\tau}|
\ge c(1+ |{\eta}|)$, and we obtain the usual asymptotic expansion in
${\Psi}^{m+k-j}_{1,0}(\br^{n+1})$ for $j \ge 0$. Since $|{\tau}| \le
C|{\eta}|$ in $\supp {\phi}$ and ${\chi} = 1$ on $\supp {\psi}$,
it suffices to prove ~\eqref{solvest} for $P^* = D_t + i F^w$.

By using Proposition ~\ref{maincor} on ${\phi}^wu$,
we obtain that
\begin{multline}\label{fest}
\mn{B_T^w{\phi}^wu}^2_{-1/2} + \mn{{\phi}^w u}^2 \\ \le C_0
T \left(\im\sw{{\phi}^wP^*u, B_T^w{\phi}^wu}+
 \im\sw{[P^*, {\phi}^w\id_N]u, B_T^w{\phi}^wu}\right) + C_1\mn{{\phi}^w u}^2_{-1}
\end{multline}
where $B_T^w \in L^\infty(\br,{\Psi}^{1}_{1/2,1/2}(\br^n))$ is an
$N \times N$ system with $\nabla B_T \in L^\infty(\br, 
S^{1}_{1/2,1/2}(\br^n))$, and $\mn{u}_s = \mn {\w{D_y}^su}$ is
the partial Sobolev norm in the $y$ variables.
Since $|{\tau}| \le C |{\xi}|$ in $\supp {\phi}$ we find that 
$ \mn{{\phi}^w u}_{-1} \le C \mn{ u}_{(-1)}$
For any $u,\ v \in \Cal S(\br^n, \bc^N)$ we have that
\begin{equation}\label{helpest}
|\sw{v, B^w_T u} | = |\sw{\w{D_y}^{1/2}v, \w{D_y}^{-1/2}B^w_Tu}| \le
C(\mn v_{1/2}^2  +
\mn {B^w_T u}^2_{-1/2})
\end{equation}
where $\w{D_y} = 1 + |D_y|$. 
Now ${\phi}^w = {\phi}^w{\psi}^w$ modulo
${\Psi}^{-2}_{1,0}(\br^{n+1}) $, thus we find from ~\eqref{helpest}
that
\begin{equation}\label{f1est}
|\sw{{\phi}^wP^*u, B_T^w{\phi}^wu}| 
\le C(\mn{{\psi}^wP^*u}_{1/2}^2 + \mn u^{2} + \mn {B_T^w{\phi}^wu}^2_{-1/2})
\end{equation}
where the last term can be cancelled for small enough ~$T$ in
~\eqref{fest}.
We also have to estimate the commutator term $\im\sw{[P^*, {\phi}^w \id_N]u,
B_T^w{\phi}^wu}$ in~\eqref{fest}. We find 
$$[P^*, {\phi}^w\id_N]
= -(i\partial_t {\phi}^w - \set{f,{\phi}}^w)\id_N \in
{\Psi}^{0}_{1,0}(\br^{n+1})$$
modulo ${\Psi}^{-1}_{1,0}(\br^{n+1})$
by the expansion, where the error term can be estimated by ~\eqref{helpest}. 
Since ${\phi}= {\chi}_0{\phi}_0$ we
find that $\set{f, {\phi}} = {\phi}_0 \set{f,{\chi}_0} + {\chi}_0
\set{f,{\phi}_0}$, where $ {\phi}_0\set{f,{\chi}_0} = R_0 \in
S^{0}_{1,0}(\br^{n+1})$ is supported when $|{\tau}| \cong |{\eta}|$
and ${\psi}=1$. Now $({\tau} + if)^{-1} \in S^{-1}_{1,0}(\br^{n+1})$
when $|{\tau}| \cong |{\eta}|$, thus by
~\cite[Theorem~18.1.35]{ho:yellow} we find that $ R_0^w =
A_1^w{\psi}^wP^*\text{ modulo ${\Psi}^{-1}_{1,0}(\br^{n+1})$} $ where
$A_1 = R_0({\tau} + if)^{-1} \in S^{-1}_{1,0}(\br^{n+1})$.  As before,
we find from ~\eqref{helpest} that
 \begin{multline}\label{f2est} 
|\sw{R_0^wu,B_T^w {\phi}^wu}| \le
C(\mn{R_0^w u}^2_{1/2} + \mn 
{B_T^w{\phi}^w u}_{-1/2}^2) \\\le C_0(\mn{{\psi}^wP^*u}_{-1/2}^2 + \mn
{B_T^w{\phi}^w u}_{-1/2}^2 + \mn u^2_{-1/2})
 \end{multline}
and also
$$|\sw{\partial_t {\phi}^wu,  B_T^w{\phi}^wu}| 
\le \mn{R_1^wu}^2 + \mn {B_T^w{\phi}^w u}_{-1/2}^2 $$ 
where $R_1^w =\w{D_{y}}^{1/2}\partial_t {\phi}^w\in
{\Psi}^{1/2}_{1,0}(\br^{n+1})$, thus $t \ne 0$ in $\wf (R_1^w)$.

It only remains to estimate the term $\im \sw{(\set{f,{\phi}_0}{\chi}_0)^wu,
  B_T^w{\phi}^wu}$. Here $(\set{f,{\phi}_0}{\chi}_0)^w = 
\set{f,{\phi}_0}^w{\chi}_0^w$ and ${\phi}^w = 
{\phi}_0^w{\chi}_0^w$ modulo ${\Psi}^{-1}_{1,0}(\br^{n+1})$.
As in ~\eqref{helpest} we find
$$|\sw{R^wu, B_T^wv}| = |\sw{\w{D_y} R^wu, \w{D_y}^{-1} B^w_Tv}|
\le C(\mn u^2 + \mn v^2)$$ 
for $R \in S^{-1}_{1,0}(\br^{n+1})$,
thus we find
\begin{equation*}
|\im \sw{(\set{f,{\phi}_0}{\chi}_0)^wu, B_T^w{\phi}^wu} |
\le  |\im \sw{\set{f,{\phi}_0}^w{\chi}_0^wu,
  B_T^w{\phi}_0^w{\chi}_0^wu}| +  C\mn u^2.
\end{equation*}
The calculus gives $B_T^w{\phi}_0^w = (B_T{\phi}_0)^w$  and
$$2i\im \left((B_T{\phi}_0)^w\set{f,{\phi}_0}^w \right)= 
\set{B_T{\phi}_0,\set{f,{\phi}_0}}^w = 0$$ 
modulo $L^\infty(\br,{\Psi}^{0}_{1/2,1/2}(\br^n))$ since ~$\nabla
(B_T{\phi}_0) \in L^\infty(\br, S^{1}_{1/2,1/2}(\br^{n}))$ and
$\set{f, {\phi}_0}$ is real. 
Thus, we obtain 
 \begin{equation}\label{f3est} 
|\im \sw{\set{f,{\phi}_0}^w{\chi}_0^wu,
  B_T^w{\phi}_0^w{\chi}_0^wu}| \le C\mn {{\chi}_0^wu}^2 \le C'\mn u^2
  \end{equation} 
and the estimate~\eqref{solvest} for small enough~$T$, which
completes the proof of Theorem~\ref{mainthm}.
\end{proof}

\section{The symbol classes and weights}\label{symb}

In this section we shall define the symbol classes we shall
use. Assume that $f \in L^\infty(\br, S(h^{-1}, 
hg^\sh))$ is scalar and satisfies ~\eqref{pcond0}, here $0 < h \le 1$ and $g^\sh =
(g^\sh)^{\sigma}$ are constant. It is no restriction to change $h$ so
that $|f|^{g^\sh}_1  \le h^{-1/2}$, which we assume in
what follows.
The results shall be uniform in the usual sense, they will only depend on
the seminorms of $f$ in $L^\infty(\br, S(h^{-1}, hg^\sh))$.  Let
\begin{align}\label{xpmdef}
&X_+(t) = \set{w \in T^*\br^n : \exists\,s\le t,\ f(s,w) >0}\\
&X_-(t) = \set{w \in T^*\br^n : \exists\,s\ge
 t,\ f(s,w) < 0}.\label{xpmdef1} 
\end{align}
Clearly, $X_\pm(t)$ are open in $T^*\br^n$, 
$X_+(s) \subseteq X_+(t)$ and $X_-(s) \supseteq X_-(t)$
when $s \le t$.
By condition $(\ol{\Psi})$
we obtain that $X_-(t) \bigcap X_+(t) = \emptyset$ and $\pm f(t,w) \ge
0$ when  $w \in X_\pm(t)$, $\forall\, t$.
Let $X_0(t) = T^*\br^n\setminus \left(X_+(t)\bigcup
  X_-(t)\right)$ which is closed in
$T^*\br^n$. By the definition of $X_\pm(t)$ we have
$f(t,w) =0$ when  $w \in X_0(t)$. 
Let
\begin{equation}\label{d0def}
d_0(t_0,w_0) = \inf\set{g^\sh(w_0-z)^{1/2}:\ z \in X_0(t_0)}
\end{equation}
be is the $g^\sh$ distance in $T^*\br^n$ to $ X_0(t_0)$ for fixed $t_0$,
it is equal to $+\infty$ in the case that 
$X_0(t_0) = \emptyset$. By taking the infimum over ~$z$ we
find that $w \mapsto d_0(t,w)$ is Lipschitz continuous with respect to
$g^\sh$ for fixed~$t$ when $ d_0 < \infty$, i.e., 
\begin{equation*}
\sup_{w \ne z \in  T^*\br^n}|{\delta}_0(t,w)-
{\delta}_0(t,z)|/g^\sh(w-z)^{1/2} \le 1
\end{equation*} 

\begin{defn} \label{d0deforig}   
We define the signed  distance function ${\delta}_0(t,w)$ by
\begin{equation}\label{delta0def}
{\delta}_0 =  \sgn(f)\min(d_0,h^{-1/2})
\end{equation}
where $d_0$ is given by ~\eqref{d0def} and 
\begin{equation}\label{fsign}
\sgn (f)(t,w) = 
\left\{
\begin{alignedat}{2}
 \pm 1&, &\quad &w \in X_\pm(t)\\
 0&, &\quad &w \in X_0(t)
\end{alignedat}
\right.
\end{equation}
so that $\sgn(f)f \ge 0$.
\end{defn}

\begin{rem}
The signed distance function $w \mapsto {\delta}_0(t,w)$ given by
Definition~\ref{d0deforig} is Lip\-schitz continuous with respect to the
metric $g^\sh$ with Lipschitz constant equal to 1, $\forall\, t$. We
also find that 
$t \mapsto {\delta}_0(t,w)$ is non-decreasing, ${\delta}_0f \ge 0$,
$|{\delta}_0| \le h^{-1/2}$ and when $|{\delta}_0| < h^{-1/2}$ we find
that $|{\delta}_0| = d_0$ is given by 
~\eqref{d0def}.
\end{rem}

In fact, it suffices to show the Lipschitz continuity of $w \mapsto
{\delta}_0(t,w)$ on ~$\complement{X_0}(t)$, and then it follows from
the Lipschitz continuity of $w \mapsto
d_0(t,w)$ when $d_0 < \infty$. 
Clearly ${\delta}_0f \ge 0$, and since $X_+(t)$ is non-decreasing
and $X_-(t)$ is non-increasing when $t$ increases, we find that $ t
\mapsto {\delta}_0(t,w)$ is non-decreasing.

In the following, we shall treat $t$ as a parameter which we shall
suppress, and  we shall denote $f' = \partial_wf$ and $f'' =
\partial_w^2f$.
We shall also in the following assume that we have choosen
$g^\sh$ orthonormal coordinates so that $g^\sharp(w) = |w|^2$ and
$|f'| \le h^{-1/2}$.

\begin{defn}\label{g1def}
Let
\begin{equation}\label{h2def}
H^{-1/2} = 1 + |{\delta}_0| + \frac{|f'|}{|{f''}| + h^{1/4}|f'|^{1/2} +
  h^{1/2}}
\end{equation}
and $G = H g^\sh$.
\end{defn}

Observe that $\w{{\delta}_0} = 1 + |{\delta}_0| \le H^{-1/2}$ and
\begin{equation} \label{H1hest}
1 \le H^{-1/2}\le 1 + |{\delta}_0| + h^{-1/4} |f'|^{1/2}\le 3 h^{-1/2}
\end{equation}
since $|f'| \le h^{-1/2}$ and $|{\delta}_0| \le
h^{-1/2}$. This gives that $hg^\sh \le 3 G$.

\begin{defn} \label{Mdef}
Let
\begin{equation}\label{Mdef0}
M = |f| +|f'| H^{-1/2} + |{f''}| H^{-1} + h^{1/2}H^{-3/2}
\end{equation}
then we have that $h^{1/2} \le M \le C_3 h^{-1}$.
\end{defn}

The metric $G$ and weight $M$ have the following properties
according to  Proposition~3.7 in \cite{de:NT}.

\begin{prop}\label{g1prop}
We find that $H^{-1/2}$ is Lipschitz continuous,
$G$ is ${\sigma}$ ~temperate  such that $G =
H^2G^{\sigma}$ and
\begin{equation}\label{tempest}
H(w) \le C_0 H(w_0)(1 + G_{w_0}(w-w_0)) 
\end{equation}
We have that $M$ is a weight for $G$ such that $M \le C H^{-1}$, $f \in S(M,G)$
and
\begin{equation}\label{mtemp}
M(w) \le C_1 M(w_0)(1 + G_{w_0}(w-w_0))^{3/2}
\end{equation}
\end{prop}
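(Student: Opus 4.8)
The proposition packages the standard structural facts that follow once one establishes the single analytic input: that $H^{-1/2}$ is Lipschitz continuous with respect to~$g^\sh$, with a Lipschitz constant depending only on the seminorms of~$f$ in $L^\infty(\br,S(h^{-1},hg^\sh))$. Granting this, slow variation of $G$, the identities $G=H^2G^\sigma$ and $G\le G^\sigma$, the $\sigma$~temperateness of~$G$, and the estimate~\eqref{tempest} follow by the classical Weyl-calculus arguments (cf.\ \cite[Section~18.5]{ho:yellow}); what then remains is to show that $M$ is a weight for~$G$ with~\eqref{mtemp}, and that $f\in S(M,G)$. So the plan is: (1) prove the Lipschitz estimate; (2) read off the metric statements; (3) treat the weight~$M$. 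The crux is Step~(1) --- in effect, it is the reason for the precise combination in~\eqref{h2def}.

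\emph{Step 1.} Write $H^{-1/2}=1+|{\delta}_0|+{\mu}$ with ${\mu}=|f'|/D$ and $D=|f''|+h^{1/4}|f'|^{1/2}+h^{1/2}$. The term $1+|{\delta}_0|$ is Lipschitz with constant~$1$ by the Remark after Definition~\ref{d0deforig}, so it suffices to bound $|\nabla{\mu}|$. In the chosen $g^\sh$~orthonormal coordinates, $f\in S(h^{-1},hg^\sh)$ normalised so that $|f'|\le h^{-1/2}$ gives $|f^{(j)}|\le C_jh^{j/2-1}$; in particular $|f'|\le h^{-1/2}$, $|f''|\le C$ and $|f'''|\le Ch^{1/2}$. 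On the open set $\{f'\ne 0\}$, where $|f'|$ and $|f''|$ are Lipschitz and hence differentiable a.e., the chain rule writes $\nabla{\mu}$ as a sum of three terms bounded respectively by $|\nabla|f'||/D\le|f''|/D\le 1$; by $|f'|\,|\nabla|f''||/D^2\le C|f'|h^{1/2}/D^2\le C$, using $D^2\ge h^{1/2}|f'|$; and by $\tfrac12 h^{1/4}|f'|^{1/2}|\nabla|f'||/D^2\le\tfrac12 h^{1/4}|f'|^{1/2}|f''|/D^2\le\tfrac12$, using $D^2\ge h^{1/4}|f'|^{1/2}|f''|$. Thus $|\nabla{\mu}|\le C$ on $\{f'\ne 0\}$; since ${\mu}$ is continuous and vanishes on $\{f'=0\}$, cutting any segment at its first and last point with $f'=0$ upgrades this to a global Lipschitz bound $|{\mu}(w)-{\mu}(w_0)|\le C|w-w_0|$. (It is here that one uses $S(h^{-1},hg^\sh)$ and not merely $S(h^{-1},g^\sh)$: the bound $|f'''|\le Ch^{1/2}$ is what makes the second term bounded.)

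\emph{Step 2.} Since $g^\sh=(g^\sh)^\sigma$ we get $G^\sigma=(Hg^\sh)^\sigma=H^{-1}g^\sh$, hence $G=H^2G^\sigma$, and $G\le G^\sigma$ because $H\le 1$ (as $H^{-1/2}\ge 1$). If $G_{w_0}(w-w_0)=H(w_0)|w-w_0|^2$ is small, then $|w-w_0|$ is small compared with $H^{-1/2}(w_0)$, so Step~1 gives $|H^{-1/2}(w)-H^{-1/2}(w_0)|\ll H^{-1/2}(w_0)$ and therefore $H(w)\cong H(w_0)$: $G$ is slowly varying. Squaring the Lipschitz inequality of Step~1 and using $H\le 1$ then yields the $\sigma$~temperateness of~$G$ and the estimate~\eqref{tempest} in the usual manner; also $h\lesssim H$ by~\eqref{H1hest}, so $hg^\sh\lesssim G$.

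\emph{Step 3.} Write $M=\sum_{j=0}^{2}|f^{(j)}|H^{-j/2}+h^{1/2}H^{-3/2}$; recall $h^{1/2}\le M\le C_3h^{-1}$ from Definition~\ref{Mdef}. On a $G_{w_0}$~ball $B$ of small radius~${\rho}$ one has $H\cong H(w_0)$ by Step~2, and Taylor's formula together with $|f'''|\le Ch^{1/2}$ shows that each of the four summands oscillates over $B$ by at most $C{\rho}^{1/2}\sup_{B}M$; since $\sup_{B}M$ is attained on $\ol B$ it can be absorbed for ${\rho}$ small, giving $M\cong M(w_0)$ on $B$, so $M$ is $G$~continuous. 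For~\eqref{mtemp}, expand each $f^{(j)}(w)$ about $w_0$ to order $2-j$, with third-order remainder $\le Ch^{1/2}|w-w_0|^{3-j}$; using $|f^{(j+k)}(w_0)|H^{-(j+k)/2}(w_0)\le M(w_0)$ for $j+k\le 2$, $h^{1/2}H^{-3/2}(w_0)\le M(w_0)$, the Lipschitz consequence $H^{-1/2}(w)\le H^{-1/2}(w_0)\bigl(1+C_0G_{w_0}(w-w_0)^{1/2}\bigr)$ and $|w-w_0|^k=G_{w_0}(w-w_0)^{k/2}H^{-k/2}(w_0)$, a term-by-term check shows every contribution is $\le CM(w_0)\bigl(1+G_{w_0}(w-w_0)\bigr)^{3/2}$ --- the Taylor parts of the $f^{(j)}$ cost only exponent $\le 1$, while the exponent $3/2$ is forced by the order-three remainder of the $|f|$~term, which after the matchings is $\lesssim h^{1/2}H^{-3/2}(w_0)\,G_{w_0}(w-w_0)^{3/2}$. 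Finally $f\in S(M,G)$: the seminorm $|f|_j^G=|f^{(j)}|H^{-j/2}$ is literally one of the summands of $M$ for $j=0,1,2$, hence $\le M$; and for $j\ge 3$, $|f^{(j)}|\le C_jh^{j/2-1}$ together with $h\lesssim H$ gives $|f^{(j)}|H^{-j/2}\le C_j'h^{1/2}H^{-3/2}\le C_j'M$. The main obstacle is Step~1 --- equivalently, the algebraic fact that $D^2\ge h^{1/2}|f'|$ and $D^2\ge h^{1/4}|f'|^{1/2}|f''|$ (using $|f'''|\le Ch^{1/2}$), which is exactly what dictates the shape of~\eqref{h2def}; after that the only quantitative care needed is the bookkeeping of the order-three Taylor remainder in Step~3.
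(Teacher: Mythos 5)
The paper gives no proof of this proposition at all --- it is quoted verbatim (up to notation) from Proposition~3.7 of \cite{de:NT} --- so you are supplying an argument the paper omits. Your architecture is the right one: everything hinges on the Lipschitz continuity of $H^{-1/2}$, and your Step~1 is correct and well executed --- the three bounds $D\ge|f''|$, $D^2\ge h^{1/2}|f'|$ and $D^2\ge h^{1/4}|f'|^{1/2}|f''|$, played against $|\nabla|f'||\le|f''|$ and $|\nabla|f''||\le|f'''|\le Ch^{1/2}$, together with the segment-cutting at the zeros of $f'$, are exactly what the denominator in \eqref{h2def} is built for. Your derivation of \eqref{mtemp} by third-order Taylor expansion, with the $h^{1/2}H^{-3/2}$ term absorbing the remainder, is also correct, as is the verification that $f\in S(M,G)$.

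Two gaps remain. First, \eqref{tempest}: squaring the Lipschitz inequality gives $H^{-1}(w_0)\le 2H^{-1}(w)+2C^2|w-w_0|^2$, hence $H(w)\le 2H(w_0)\bigl(1+C^2G_{w}(w-w_0)\bigr)$ --- the metric is evaluated at $w$, not at $w_0$. The printed inequality with $G_{w_0}$ does not follow from Lipschitz continuity and is in fact false: for $f(w)=w_1$ (suitably cut off) one has $H^{-1/2}(0)\cong h^{-1/4}$ and $H^{-1/2}(w_0)\cong h^{-1/2}$ at $w_0=(h^{-1/2},0,\dots,0)$, so $G_{w_0}(w_0-0)\cong 1$ while $H(0)\cong h^{1/2}\gg h\cong H(w_0)$. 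This appears to be a misprint in the statement; the version with $G_w(w-w_0)$ that your computation actually produces is the true one, implies $\sigma$-temperateness via $G\le G^{\sigma}$, and is what is needed later --- but your Step~2, which claims to obtain the printed statement ``in the usual manner'', cannot do so, and you should say explicitly which base point you end up with. Second, you never prove $M\le CH^{-1}$, which is part of the statement and is stronger than the $M\le C_3h^{-1}$ you recall from Definition~\ref{Mdef}. The only nontrivial summand is $|f|$: if $d_0(w)<h^{-1/2}$, expand $f$ at a nearest point $z\in X_0$ (where $f$ vanishes) to get $|f(w)|\le|f'(w)|\,d_0+Cd_0^2\le C'H^{-1}$, using $|f'|\le CH^{-1/2}$ (read off from \eqref{h2def}, since its denominator is $O(1)$) and $|\delta_0|\le H^{-1/2}$; if $d_0\ge h^{-1/2}$ then $H^{-1}\ge h^{-1}\ge|f|$. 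The remaining summands are handled by $|f'|\le CH^{-1/2}$, $|f''|\le C$ and \eqref{H1hest}.
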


Since $G \le g^\sh \le
G^{\sigma}$ we find that the conditions ~\eqref{tempest}
and~\eqref{mtemp} are stronger than the property of being
${\sigma}$~temperate (in fact, strongly ${\sigma}$ ~temperate in the
sense of \cite[Definition~7.1]{bc:sob}).
Note that $f \in S(M,Hg^\sh)$ for any choice of $H \ge
h$ in Definition~\ref{Mdef}. The following property of ~$G$ is the most
important for the proof.

\begin{prop}\label{ffactprop}
Let $H^{-1/2}$ be given by Definition~\ref{g1def} for $f \in
S(h^{-1},hg^\sh)$. There exists ${\kappa_1}>0$ so that if\/
$\w{{\delta}_0} = 1 + |{\delta}_0| \le {\kappa}_1H^{-1/2}$ then
\begin{equation}\label{ffactor}
f = {\alpha}_0{\delta}_0
\end{equation}
where ${\kappa}_1MH^{1/2} \le {\alpha}_0 \in S(MH^{1/2}, G)$,
which implies that 
${\delta}_0 = f/{\alpha}_0 \in S(H^{-1/2}, G)$.
\end{prop}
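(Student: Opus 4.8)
The plan is to establish the factorization $f = \alpha_0 \delta_0$ locally and then patch it together. First I would observe that the statement is trivially true where $\delta_0 = 0$: there $f = 0$ as well (since $\delta_0 = 0$ only on $X_0(t)$, where $f$ vanishes), and $\alpha_0$ need only be chosen consistently on a neighborhood. So the content is the factorization where $|\delta_0| = d_0 > 0$, i.e. at points $w_0$ lying in $\complement X_0(t)$, and there one has $|\delta_0| < h^{-1/2}$ after possibly shrinking, so $\delta_0 = \pm d_0$ with $d_0$ the genuine $g^\sh$-distance to $X_0(t)$.

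The key idea is a Taylor/integration argument along the segment from the nearest point of $X_0(t)$ to $w_0$. Fix $w_0$ with $\delta_0(w_0) \neq 0$ and let $z_0 \in X_0(t)$ realize $d_0(w_0) = |w_0 - z_0|$. Then $f(z_0) = 0$, and writing $w_\theta = z_0 + \theta(w_0 - z_0)$ I would set
\begin{equation*}
\alpha_0(w) = \sgn(f)(w) \int_0^1 \langle f'(w_\theta), (w_0 - z_0)/|w_0 - z_0|\rangle \, d\theta
\end{equation*}
so that $f(w_0) = \alpha_0(w_0)\delta_0(w_0)$ by the fundamental theorem of calculus, and then one must check this local definition extends to a symbol in $S(MH^{1/2}, G)$ with the stated lower bound and is (up to the usual partition-of-unity gluing, using the finite overlap from $\sigma$-temperateness) independent of the choices. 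The size $|\alpha_0| \le C M H^{1/2}$ follows because $|f'| \le C M H^{-1/2}$ (read off from \eqref{Mdef0}) and the segment has $G$-length comparable to $H^{1/2} d_0 \le H^{1/2}\kappa_1 H^{-1/2} = \kappa_1$ by hypothesis, so $G_{w_0}$ and $G_{z_0}$ are comparable all along it by \eqref{tempest}, hence $M(w_\theta) \cong M(w_0)$ and $H(w_\theta) \cong H(w_0)$ by \eqref{mtemp} and \eqref{tempest}; thus $\alpha_0 \in S(MH^{1/2},G)$ and differentiating under the integral sign gives the seminorm bounds in $S(MH^{1/2}, G)$. The lower bound $\alpha_0 \ge \kappa_1 M H^{1/2}$ is the heart of the matter: it says $f$ cannot be much smaller than $M H^{1/2} \cdot |\delta_0| = M H^{1/2} d_0$ when $d_0$ is small relative to $H^{-1/2}$. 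This is exactly where the structure of $M$ and $H$ pays off — near $z_0 \in X_0(t)$ one has $f(z_0) = 0$ but by the definition of $X_0(t)$ (no sign change forced) the quadratic behavior of $f$ is controlled, and one must show that the linear term $|f'| d_0$ dominates the error $|f''| d_0^2 + \dots$ precisely in the regime $\w{\delta_0} \le \kappa_1 H^{-1/2}$; the denominator $|f''| + h^{1/4}|f'|^{1/2} + h^{1/2}$ in \eqref{h2def} is engineered so that $|f'| H^{-1/2} \gtrsim |f''| H^{-1} + h^{1/2} H^{-3/2}$ on this set, which is what makes $M \cong |f'| H^{-1/2}$ there and forces the claimed bound.

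The main obstacle I expect is the lower bound $\alpha_0 \gtrsim MH^{1/2}$, which amounts to showing $M \cong |f'|H^{-1/2}$ (plus $|f|$) on the region $\w{\delta_0} \le \kappa_1 H^{-1/2}$, and then that this surviving part of $M$ is genuinely captured by the mean value of $f'$ along the minimizing segment rather than cancelling. For the first point one compares the four terms in \eqref{Mdef0}: from $H^{-1/2} \le 1 + |\delta_0| + h^{-1/4}|f'|^{1/2} \le \kappa_1 H^{-1/2}$ (for small $\kappa_1$, absorbing) one deduces $H^{-1/2} \cong \max(1, h^{-1/4}|f'|^{1/2})$ is controlled, and from \eqref{h2def} that $|f'| \le (|f''| + h^{1/4}|f'|^{1/2} + h^{1/2}) H^{-1/2}$, which rearranges to $|f'|H^{-1/2} \ge |f''|H^{-1} + h^{1/4}|f'|^{1/2}H^{-1} + h^{1/2}H^{-1} \gtrsim |f''|H^{-1} + h^{1/2}H^{-3/2}$ (the last using $H^{-1} \ge H^{-3/2}\cdot\dots$ — careful bookkeeping with $H \le 1$). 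Hence $M \le C(|f| + |f'|H^{-1/2})$ on this set, and the reverse inequality $M \ge |f'|H^{-1/2}$ is automatic. The non-cancellation of the mean value, i.e. that $\left|\int_0^1 f'(w_\theta)\cdot e \,d\theta\right| \gtrsim |f'(w_0)|$ where $e$ is the unit direction, again follows because along the segment $|f''| \le C M H^{-1} \le C' |f'(w_0)| H^{-1/2}$ and the segment length is $\le \kappa_1 H^{-1/2}$, so $f'$ varies by at most $C'\kappa_1 |f'(w_0)|$, a small fraction for $\kappa_1$ small. Putting these together yields $|\alpha_0(w_0)| \ge \kappa_1 M(w_0) H(w_0)^{1/2}$, and then $\delta_0 = f/\alpha_0 \in S(H^{-1/2}, G)$ follows from the quotient being well-defined (since $\alpha_0$ is bounded below) together with $f \in S(M,G)$ and $\alpha_0 \in S(MH^{1/2},G)$ by the calculus of $\sigma$-temperate symbol classes, which completes the proof of Proposition~\ref{ffactprop}.
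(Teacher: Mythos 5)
The paper does not prove this proposition at all --- it is quoted from Proposition~3.9 of \cite{de:NT} --- so your proposal has to be judged as a self-contained argument. Your quantitative bookkeeping is essentially right: on the set where $\w{{\delta}_0} \le {\kappa}_1 H^{-1/2}$ one has $|f'| \ge (1-{\kappa}_1)H^{-1/2}\bigl(|f''| + h^{1/4}|f'|^{1/2} + h^{1/2}\bigr)$, hence $|f'| \ge c\, h^{1/2}H^{-1}$ and $M \le C|f'|H^{-1/2}$ there (after absorbing $|f|$ by Taylor expansion from the nearest zero), the variation of $f'$ along the minimizing segment is $O({\kappa}_1 |f'|)$, and this yields the two-sided bound ${\alpha}_0 \cong MH^{1/2}$ \emph{at the base point}. (Two sign-of-exponent slips: $M \ge |f'|H^{-1/2}$ gives $|f'| \le MH^{1/2}$, not $MH^{-1/2}$, and similarly $|f''| \le MH$; your later estimates still come out right.)

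The genuine gap is smoothness. Your formula for ${\alpha}_0(w)$ depends on a choice of nearest point $z_0 = z_0(w) \in X_0(t)$, and this map is a priori neither unique nor differentiable; $d_0$ and ${\delta}_0$ are only known to be Lipschitz. ``Differentiating under the integral sign'' is therefore not available, and the partition-of-unity gluing does not rescue it, because the FTC construction produces the identity $f = {\alpha}_0{\delta}_0$ only at the single point $w_0$, not on a patch, so there is nothing to glue. The actual content of Proposition~3.9 in \cite{de:NT} is to establish regularity first: one shows that where $\w{{\delta}_0}\le{\kappa}_1H^{-1/2}$ the gradient $f'$ dominates $f''$ on the scale $H^{-1/2}$, so that the zero set of $f(t,\cdot)$ is locally a $C^\infty$ hypersurface ${\Sigma}$ which coincides with $X_0(t)$ there (this needs an argument, since $X_0(t)$ is defined through $X_\pm(t)$, which involve $f$ at other times, and the zero set of $f(t,\cdot)$ could a priori be strictly larger than $X_0(t)$); one then checks that $w_0$ lies within the focal radius $\sim |f'|/|f''| \gtrsim H^{-1/2}$ of ${\Sigma}$, so that the nearest-point projection and the signed distance are $C^\infty$ with the seminorm bounds of $S(H^{-1/2},G)$; only then is ${\alpha}_0 = f/{\delta}_0$ obtained by Taylor division of a smooth function by a defining function of its zero set. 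Your proposal runs this backwards --- manufacturing ${\alpha}_0$ first and deducing smoothness of ${\delta}_0$ as a quotient --- but the construction of ${\alpha}_0$ presupposes regularity of the nearest-point map, which is exactly what has to be proved.
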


This follows directly from Proposition~3.9 in~\cite{de:NT}.
Next, we shall define the weight ~$m$ we shall use. 

\begin{defn}\label{h0def}
For  $(t,w) \in \br\times T^*\br^n$ we
let 
\begin{multline}\label{mphodef}
  m(t,w) = \inf_{t_1 \le t \le
    t_2}\big\{\, |{\delta}_0(t_1,w) -{\delta}_0(t_2,w)|\\ +
    \max\big(H^{1/2}(t_1,w)\w{ {\delta}_0(t_1,w)}^2,
    H^{1/2}(t_2,w)\w{ {\delta}_0(t_2,w)}^2\big)/2 \,\big\}
\end{multline}
where $\w{{\delta}_0} = 1 + |{\delta}_0|$. 
\end{defn}

This weight essentially measures how much $t \mapsto
{\delta}_0(t,w)$ changes between the minima of $t \mapsto
H^{1/2}(t,w)\w{{\delta}_0(t,w)}^2$, which will give restrictions on
the sign changes of the symbol. 
When $t \mapsto {\delta}_0(t,w)$ is constant for fixed ~$w$, we find
that $t \mapsto m(t,w)$ is equal to the largest quasi-convex
minorant of $t \mapsto H^{1/2}(t,w)\w{{\delta}_0(t,w)}^2/2$, i.e., $\sup_I
m = \sup_{\partial I} m $ for compact intervals $I \subset \br$,
see~\cite[Definition~1.6.3]{ho:conv}.

The main difference between this weight and the weight in~\cite{de:NT} is
the use of $H^{1/2}\w{{\delta}_0}^2$ in the definition of
$m$ instead of $H^{1/2}\w{{\delta}_0}$, and this is due to
Lerner~\cite{ln:cutloss}. 
The weight has the following properties according to 
Propositions~4.3 and~4.4 in ~\cite{de:cut}.

\begin{prop}\label{h0slow}
We have that $m \in L^\infty(\br \times T^*\br^{n})$, 
$w \mapsto m(t,w)$ is uniformly Lipschitz continuous, $\forall\, t$,
and
\begin{equation} \label{hhhest} 
 h^{1/2}\w{ {\delta}_0}^2/6 \le
m \le H^{1/2}\w{ {\delta}_0}^2/2 \le \w{ {\delta}_0}/2
\end{equation}  
There exists $C>0$ so that
\begin{equation}\label{wtemp}
m(t_0,w) \le C m(t_0,w_0)(1 + |w-w_0|/\w{{\delta}_0(t_0,w_0)})^{3}
\end{equation}
thus $m$ is a weight for $g^\sh$.
\end{prop}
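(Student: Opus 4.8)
The statement to prove is Proposition~\ref{h0slow}, which asserts $L^\infty$ boundedness, uniform Lipschitz continuity in $w$, the chain of inequalities \eqref{hhhest}, and the temperance estimate \eqref{wtemp} for the weight $m$ defined in \eqref{mphodef}.

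My plan is as follows. First I would establish the two-sided bound \eqref{hhhest}. The upper bound $m \le H^{1/2}\w{{\delta}_0}^2/2$ is immediate by taking $t_1 = t_2 = t$ in the infimum \eqref{mphodef}, and then $H^{1/2}\w{{\delta}_0}^2/2 \le \w{{\delta}_0}/2$ follows from $H^{1/2}\w{{\delta}_0} \le 1$, which is \eqref{H1hest} together with $\w{{\delta}_0} \le H^{-1/2}$ (observed right after Definition~\ref{g1def}). For the lower bound $h^{1/2}\w{{\delta}_0}^2/6 \le m$, I would argue that for any competitor pair $t_1 \le t \le t_2$, either $\w{{\delta}_0(t_1,w)}$ and $\w{{\delta}_0(t_2,w)}$ are both comparable to $\w{{\delta}_0(t,w)}$ (using that $t \mapsto {\delta}_0(t,w)$ is non-decreasing and the two values bracket ${\delta}_0(t,w)$), in which case the $\max$ term dominates $h^{1/2}\w{{\delta}_0(t,w)}^2$ up to a constant since $H \ge h$; or else $|{\delta}_0(t_1,w) - {\delta}_0(t_2,w)|$ is a definite fraction of $\w{{\delta}_0(t,w)}$, which already gives the bound since $h^{1/2} \le 1$. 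Balancing the two cases produces the constant $1/6$. Boundedness $m \in L^\infty$ then follows from the upper bound since $H^{1/2}\w{{\delta}_0}^2 \le \w{{\delta}_0} \le H^{-1/2} \le 3h^{-1/2}$ — wait, that is not bounded independently of $h$, but $m \le \w{{\delta}_0}/2$ is what the proposition claims along with membership in $L^\infty(\br\times T^*\br^n)$ relative to the fixed $h$, so I would just invoke $m \le \w{{\delta}_0}/2 \le h^{-1/2}/2$.

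Next, the Lipschitz continuity of $w \mapsto m(t,w)$: I would show each competitor functional $w \mapsto |{\delta}_0(t_1,w) - {\delta}_0(t_2,w)| + \max(\cdots)/2$ is uniformly Lipschitz, then use that an infimum of a family of uniformly $L$-Lipschitz functions is $L$-Lipschitz. The first summand is $2$-Lipschitz by the Lipschitz continuity of $w \mapsto {\delta}_0(t_j,w)$ (Lipschitz constant $1$, from the Remark after Definition~\ref{d0deforig}). For the $\max$ term I need $w \mapsto H^{1/2}(t_j,w)\w{{\delta}_0(t_j,w)}^2$ to be uniformly Lipschitz; here $H^{-1/2}$ is Lipschitz by Proposition~\ref{g1prop}, so $H^{1/2}$ is Lipschitz wherever $H^{-1/2} \ge 1$ (which always holds by \eqref{H1hest}) with a bound, and $\w{{\delta}_0}^2$ has derivative bounded by $2\w{{\delta}_0} \le 2H^{-1/2}$, so the product's Lipschitz seminorm is controlled — the product $H^{1/2}\w{{\delta}_0}^2$ has gradient bounded by $C$ since the "dangerous" factor $\w{{\delta}_0}^2$ is exactly compensated by the $H^{1/2}$ and by \eqref{tempest}. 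This is a short computation. A maximum of two Lipschitz functions is Lipschitz with the same constant, so each competitor is $O(1)$-Lipschitz, hence so is $m$.

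Finally, the temperance estimate \eqref{wtemp}, $m(t_0,w) \le C m(t_0,w_0)(1 + |w-w_0|/\w{{\delta}_0(t_0,w_0)})^3$, which I expect to be the main obstacle. The strategy: since $m$ is an infimum over pairs $(t_1,t_2)$, it suffices to show that for a near-optimal pair for $m(t_0,w_0)$, the same pair gives a competitor for $m(t_0,w)$ bounded by the right-hand side. By Lipschitz continuity $|{\delta}_0(t_j,w) - {\delta}_0(t_j,w_0)| \le |w-w_0|$, so the first summand at $w$ is at most the one at $w_0$ plus $2|w-w_0|$. For the $\max$ term I need to transfer $H^{1/2}(t_j,w)\w{{\delta}_0(t_j,w)}^2$ across from $w_0$: using \eqref{tempest} to bound $H(t_j,w) \le C H(t_j,w_0)(1 + G_{w_0}(w-w_0))$ and noting $G_{w_0}(w-w_0) = H(t_j,w_0)|w-w_0|^2$, together with $\w{{\delta}_0(t_j,w)} \le \w{{\delta}_0(t_j,w_0)} + |w-w_0|$, one gets a factor $(1 + |w-w_0|/\w{{\delta}_0(t_j,w_0)})$ raised to a total power involving $H^{1/2}$ times the square of the displacement, which after using $H^{1/2}(t_j,w_0)\w{{\delta}_0(t_j,w_0)} \le 1$ reduces to a power at most $3$. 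The bookkeeping — correctly combining the $H^{1/2}$-temperance (one factor from \eqref{tempest}) with the two factors from squaring $\w{{\delta}_0}$, and then re-expressing everything relative to $\w{{\delta}_0(t_0,w_0)}$ rather than $\w{{\delta}_0(t_j,w_0)}$ — is where the argument is delicate, and is precisely the place where the cube (rather than a smaller power) appears; this mirrors the proof of Propositions~4.3 and~4.4 in~\cite{de:cut}, to which I would ultimately defer for the routine estimates once the structure above is in place.
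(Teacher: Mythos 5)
The paper does not actually prove this proposition; it is imported verbatim from Propositions~4.3 and~4.4 of \cite{de:cut}, so there is no internal argument to compare yours with. Your handling of the easy parts is fine: the upper bound in \eqref{hhhest} and $m\in L^\infty$ are immediate from the competitor $t_1=t_2=t$, the Lipschitz continuity follows from the inf of uniformly Lipschitz competitors exactly as you say (the gradient of $H^{1/2}\w{{\delta}_0}^2$ is $O(H\w{{\delta}_0}^2+H^{1/2}\w{{\delta}_0})=O(1)$ by $H^{1/2}\w{{\delta}_0}\le 1$ and the Lipschitz continuity of $H^{-1/2}$ from Proposition~\ref{g1prop}; \eqref{tempest} is not needed there), and for the lower bound the clean statement is that monotonicity of $t\mapsto{\delta}_0(t,w)$ forces $\max\bigl(\w{{\delta}_0(t_1,w)},\w{{\delta}_0(t_2,w)}\bigr)\ge\w{{\delta}_0(t,w)}$, so the $\max$ term alone is $\ge (h^{1/2}/3)\w{{\delta}_0(t,w)}^2/2$ by \eqref{H1hest}; no case dichotomy or use of the difference term is required.

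The genuine gap is in \eqref{wtemp}, and it is not where you locate it. Transferring a near-optimal pair $(t_1,t_2)$ from $w_0$ to $w$ does handle the $\max$ term: for such a pair ${\delta}_0(t_2,w_0)-{\delta}_0(t_1,w_0)\le m(t_0,w_0)\le\w{{\delta}_0(t_0,w_0)}/2$, whence $\w{{\delta}_0(t_j,w_0)}\ge\w{{\delta}_0(t_0,w_0)}/3$, and the "re-expressing relative to $\w{{\delta}_0(t_0,w_0)}$" that you flag as delicate then goes through with a harmless factor $27$. What does \emph{not} go through is the increment $2|w-w_0|$ that you add to the first summand and never return to. Writing $d=|w-w_0|$ and ${\rho}_0=\w{{\delta}_0(t_0,w_0)}$, the only lower bound available on $m(t_0,w_0)$ is $h^{1/2}{\rho}_0^2/6$, and one checks that $2d\le C\,m(t_0,w_0)(1+d/{\rho}_0)^3$ then forces $C\gtrsim h^{-1/2}$ in the regime $d\sim{\rho}_0\sim 1$, $m(t_0,w_0)\sim h^{1/2}$ (which occurs when $H^{1/2}(\cdot,w_0)\sim h^{1/2}$ and ${\delta}_0(\cdot,w_0)\approx 0$). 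Since every constant here must be uniform in $h$ (the proposition feeds into the uniform estimate of Proposition~\ref{mainprop}), your competitor bound provably exceeds the right-hand side of \eqref{wtemp} in that regime; and ${\delta}_0(t_2,\cdot)-{\delta}_0(t_1,\cdot)$ really can grow by $2d$ between $w_0$ and $w$, so the loss is not an artifact of a sloppy triangle inequality. The proof must therefore switch to a better pair for $m(t_0,w)$ precisely when the difference term grows by order $|w-w_0|$; that is the non-routine content of Proposition~4.4 in \cite{de:cut}, and it is the step you would need to supply — the max-term arithmetic you defer on is the part that actually works.
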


The following result will be essential for
the proof of Proposition~\ref{mainprop} in Section~\ref{lower},
it follows from Proposition~4.5  in~\cite{de:cut}.

\begin{prop}\label{mestprop}
Let the weight $M$ be given by Definition~\ref{Mdef} and $m$ by
Definition~\ref{h0def}. Then there exists $C_0>0$ such that
\begin{equation}\label{Mest0}
MH^{3/2}\w{{\delta}_0}^2 \le C_0m
\end{equation} 
\end{prop}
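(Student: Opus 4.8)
The plan is to observe that \eqref{Mest0} is essentially Proposition~4.5 of~\cite{de:cut}: the weight $M$ of Definition~\ref{Mdef} and the weight $m$ of Definition~\ref{h0def} are the ones used there, and Propositions~\ref{g1prop}, \ref{ffactprop} and~\ref{h0slow} are the corresponding properties, so after checking that these statements coincide the proposition follows. To reprove it from scratch I would expand, using Definition~\ref{Mdef},
\begin{equation*}
 MH^{3/2}\w{{\delta}_0}^2 = |f|H^{3/2}\w{{\delta}_0}^2 + |f'|H\w{{\delta}_0}^2 + |f''|H^{1/2}\w{{\delta}_0}^2 + h^{1/2}\w{{\delta}_0}^2
\end{equation*}
and bound the right-hand side at a fixed point $(t,w)$ by a constant times $m(t,w)$. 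The last term is at most $6m$ by the lower bound $h^{1/2}\w{{\delta}_0}^2/6 \le m$ of Proposition~\ref{h0slow}, and that same bound settles everything when $MH^{3/2}\w{{\delta}_0}^2 \le C h^{1/2}$ at $(t,w)$; so I may assume $MH^{3/2}\w{{\delta}_0}^2 \gg h^{1/2}$ there, whence $H(t,w)$ is much larger than its lower bound $h$.

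Next I would split according to Proposition~\ref{ffactprop}. If $\w{{\delta}_0(t,w)} > {\kappa}_1 H^{-1/2}(t,w)$, then since $\w{{\delta}_0} \le H^{-1/2}$ one has $\w{{\delta}_0(t,w)} \cong H^{-1/2}(t,w)$, so $H^{1/2}\w{{\delta}_0}^2 \cong \w{{\delta}_0}$ at $(t,w)$; combined with $M \le C H^{-1}$ from Proposition~\ref{g1prop} this reduces the claim to $\w{{\delta}_0(t,w)} \le C m(t,w)$. I would prove this directly from the infimum~\eqref{mphodef}: given $t_1 \le t \le t_2$, either $t \mapsto {\delta}_0(t,w)$ changes across $[t_1,t_2]$ by an amount comparable to $\w{{\delta}_0(t,w)}$, so the first term of~\eqref{mphodef} already dominates, or else (by monotonicity of $t \mapsto {\delta}_0(t,w)$) $\w{{\delta}_0(t_i,w)} \cong \w{{\delta}_0(t,w)}$ for $i = 1,2$, and the $\max$-term is bounded below using the definition of $H$. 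If instead $\w{{\delta}_0(t,w)} \le {\kappa}_1 H^{-1/2}(t,w)$, Proposition~\ref{ffactprop} gives $f = {\alpha}_0{\delta}_0$ with ${\kappa}_1 M H^{1/2} \le {\alpha}_0 \in S(MH^{1/2}, G)$ and ${\delta}_0 \in S(H^{-1/2}, G)$; differentiating this and using $H^{1/2}\w{{\delta}_0} \le 1$ and $|{\delta}_0'| \le 1$ yields $|f'| \le C MH^{1/2}$ and $|f''| \le C MH$ at $(t,w)$, so $M \cong |{\alpha}_0| H^{-1/2}$ and all three of the first terms above are at most a constant times $MH^{3/2}\w{{\delta}_0}^2 \cong |{\alpha}_0| H \w{{\delta}_0}^2$ at $(t,w)$; the remaining task is to bound $|{\alpha}_0(t,w)| H(t,w)\w{{\delta}_0(t,w)}^2$ by a constant times~\eqref{mphodef}.

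This last estimate is the heart of the matter and the main obstacle. Again using that $m(t,w)$ is an infimum over pairs $t_1 \le t \le t_2$, it suffices to bound $|{\alpha}_0(t,w)| H(t,w)\w{{\delta}_0(t,w)}^2$ by a fixed multiple of $|{\delta}_0(t_1,w)-{\delta}_0(t_2,w)| + \tfrac12\max_i H^{1/2}(t_i,w)\w{{\delta}_0(t_i,w)}^2$, uniformly in such pairs. The ingredients are the monotonicity of $t \mapsto {\delta}_0(t,w)$ (so that ${\delta}_0(t,w)$ lies between its endpoint values), the $\sigma$ temperance estimates~\eqref{tempest} and~\eqref{mtemp} of Proposition~\ref{g1prop}, which let one compare $M$, $H$ and ${\alpha}_0$ at $(t,w)$ with their values at $(t_1,w)$ and $(t_2,w)$, and, crucially, the distinguished denominator $|f''|+h^{1/4}|f'|^{1/2}+h^{1/2}$ in Definition~\ref{g1def}, which keeps $M$ much below the crude bound $H^{-1}$ except where $H \cong h$. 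Putting these together one shows that if both $\max$-terms are small compared with $|{\alpha}_0(t,w)| H(t,w)\w{{\delta}_0(t,w)}^2$, then the change ${\delta}_0(t_2,w)-{\delta}_0(t_1,w)$ must make up the difference; this is precisely the content of Proposition~4.5 in~\cite{de:cut}, and carrying out the comparison in full is the only substantial step — the reductions above are routine manipulations with the weights.
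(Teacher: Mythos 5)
Your proposal is correct and takes the same route as the paper: the paper's entire proof of Proposition~\ref{mestprop} consists of the citation to Proposition~4.5 of~\cite{de:cut}, which is exactly where you land. The supplementary from-scratch sketch is reasonable in outline, though its key step (bounding the $\max$-term of~\eqref{mphodef} from below at times $t_1,t_2$, where $H(t_i,\cdot)$ need not be comparable to $H(t,\cdot)$) is, as you acknowledge, precisely the substance of the cited result rather than a routine reduction.
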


We have the following convexity property of $t \mapsto
m(t,w)$, which will be important for the construction of the 
multiplier. 

\begin{prop}\label{qmaxpropo}
Let $m$ be given by Definition~\ref{h0def}. Then
\begin{equation}\label{qmaxwd}
\sup_{t_1 \le t \le t_2}m(t,w) \le {\delta}_0(t_2,w) -
{\delta}_0(t_1,w) + m(t_1,w) + m(t_2,w)\qquad \forall\,w
\end{equation} 
\end{prop}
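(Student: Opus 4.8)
The plan is to read the inequality off directly from the definition of $m$, using only that $t\mapsto\delta_0(t,w)$ is non-decreasing (the remark following Definition~\ref{d0deforig}), so that $|\delta_0(s_1,w)-\delta_0(s_2,w)|=\delta_0(s_2,w)-\delta_0(s_1,w)\ge 0$ whenever $s_1\le s_2$. Fix $w$ and suppress it, put $\varphi(s)=H^{1/2}(s)\w{\delta_0(s)}^2\ge 0$, so that $m(t)=\inf_{s_1\le t\le s_2}\{\delta_0(s_2)-\delta_0(s_1)+\tfrac12\max(\varphi(s_1),\varphi(s_2))\}$, and fix $t\in[t_1,t_2]$.

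First I would choose, for a given $\varepsilon>0$, near-optimal brackets in the infima defining $m(t_1)$ and $m(t_2)$: points $s_1^-\le t_1\le s_1^+$ and $s_2^-\le t_2\le s_2^+$ with $\delta_0(s_1^+)-\delta_0(s_1^-)+\tfrac12\max(\varphi(s_1^-),\varphi(s_1^+))\le m(t_1)+\varepsilon$ and the analogous bound $\le m(t_2)+\varepsilon$. Since every quantity occurring is non-negative, keeping only $\varphi(s_1^-)$ (resp. $\varphi(s_2^+)$) in the $\max$ and using monotonicity ($\delta_0(s_1^+)\ge\delta_0(t_1)$ and $\delta_0(s_2^-)\le\delta_0(t_2)$) yields the two one-sided estimates $(\delta_0(t_1)-\delta_0(s_1^-))+\tfrac12\varphi(s_1^-)\le m(t_1)+\varepsilon$ and $(\delta_0(s_2^+)-\delta_0(t_2))+\tfrac12\varphi(s_2^+)\le m(t_2)+\varepsilon$, in which each displayed summand is non-negative.

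The key step is to observe that, since $s_1^-\le t_1\le t\le t_2\le s_2^+$, the bracket $(s_1^-,s_2^+)$ is admissible in the infimum defining $m(t)$, whence $m(t)\le \delta_0(s_2^+)-\delta_0(s_1^-)+\tfrac12\max(\varphi(s_1^-),\varphi(s_2^+))$. I would then write the telescoping identity $\delta_0(s_2^+)-\delta_0(s_1^-)=(\delta_0(t_2)-\delta_0(t_1))+(\delta_0(t_1)-\delta_0(s_1^-))+(\delta_0(s_2^+)-\delta_0(t_2))$, bound $\max(\varphi(s_1^-),\varphi(s_2^+))\le\varphi(s_1^-)+\varphi(s_2^+)$, regroup the right-hand side as $(\delta_0(t_2)-\delta_0(t_1))+[(\delta_0(t_1)-\delta_0(s_1^-))+\tfrac12\varphi(s_1^-)]+[(\delta_0(s_2^+)-\delta_0(t_2))+\tfrac12\varphi(s_2^+)]$, and insert the two one-sided estimates to conclude $m(t)\le(\delta_0(t_2)-\delta_0(t_1))+m(t_1)+m(t_2)+2\varepsilon$. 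Letting $\varepsilon\to 0$ and then taking the supremum over $t\in[t_1,t_2]$ gives \eqref{qmaxwd}.

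I do not expect a genuine obstacle here: the statement is essentially formal once the definition of $m$ is unwound. The only things requiring care are that the infimum in Definition~\ref{h0def} need not be attained (hence the approximating parameter $\varepsilon$), and that one must keep track of signs, so that discarding and splitting the $\max$ and telescoping $\delta_0$ all go in the direction making the extra pieces $\delta_0(t_1)-\delta_0(s_1^-)$ and $\delta_0(s_2^+)-\delta_0(t_2)$ non-negative and hence absorbable into $m(t_1)$ and $m(t_2)$ respectively — which is precisely where the monotonicity of $t\mapsto\delta_0(t,w)$ is used.
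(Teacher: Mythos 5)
Your proof is correct and follows essentially the same route as the paper's: the paper first records the one-sided bound $\inf_{\pm(t-t_0)\ge 0}\bigl(|{\delta}_0(t,w)-{\delta}_0(t_0,w)| + H^{1/2}(t,w)\w{{\delta}_0(t,w)}^2/2\bigr) \le m(t_0,w)$ and then combines the two one-sided infima via the admissible bracket $r\le t_1\le t\le t_2\le s$, which is exactly your argument with the infima replaced by $\varepsilon$-near-optimal brackets. No gaps.
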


\begin{proof}
Since $t \mapsto {\delta}_0(t,w)$ is monotone, we find that
\begin{equation}\label{convest}
\inf_{\pm(t-t_0) \ge 0} \left(|{\delta}_0(t,w)
-{\delta}_0(t_0,w)| + H^{1/2}(t,w)\w{{\delta}_0(t,w)}^2/2\right) \le
m(t_0,w)
\end{equation}
Let $t \in [t_1,t_2]$, then by using ~\eqref{convest}  for $t_0 = t_1$,
$t_2$, and taking the infima, we obtain that
\begin{multline*} 
m (t,w) \le \inf_{r \le t_1 < t_2\le s} {\delta}_0(s,w)
-{\delta}_0(r,w) + H^{1/2}(s,w)\w{{\delta}_0(s,w)}^2/2  +
H^{1/2}(r,w)\w{{\delta}_0(r,w)}^2/2  \\
\le {\delta}_0(t_2,w) -{\delta}_0(t_1,w) +m (t_1,w) +m (t_2,w)
\end{multline*}
which gives~\eqref{qmaxwd} after taking the supremum.
\end{proof}

Next, we shall construct the pseudo-sign $B = {\delta}_0 + {\varrho}_0$,
which we shall use in  Proposition~\ref{wickweyl} to construct
the multiplier of Proposition~\ref{mainprop}.

\begin{prop}\label{apsdef}
Assume that ${\delta}_0$ is given by
Definition~\ref{d0deforig} and  $m$ is given by
Definition~\ref{h0def}. Then for $T>0$ there exists real 
valued ${\varrho}_T(t,w)
\in L^\infty(\br\times T^*\br^n)$ with the property that $w \mapsto
 {\varrho}_T(t,w)$ is uniformly Lipschitz continuous, and
\begin{align}\label{r0prop0}
&|{\varrho}_T| \le m\\
&T\partial_t({\delta}_0 + {\varrho}_T) \ge m/2 \qquad \text{in
  $\Cal D'(\br)$}
 \label{r0prop1}
\end{align}
when $|t| <T$. 
\end{prop}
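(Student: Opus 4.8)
The plan is to build $B = {\delta}_0 + {\varrho}_T$ directly as an extremal function carrying the prescribed growth rate, and then extract the bound $|{\varrho}_T| \le m$ from the convexity estimate of Proposition~\ref{qmaxpropo}. Explicitly, for $|t| \le T$ I would put
$$
B(t,w) = \sup_{-T \le s \le t}\left( {\delta}_0(s,w) - m(s,w) + \frac{1}{2T}\int_s^t m(r,w)\,dr \right),
$$
set ${\varrho}_T(t,w) = B(t,w) - {\delta}_0(t,w)$ for $|t| \le T$, and extend ${\varrho}_T$ in $t$ to all of $\br$ so that it stays bounded and Lipschitz in~$w$ (only the values for $|t| < T$ are used). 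By Proposition~\ref{h0slow} we have $m \in L^\infty(\br\times T^*\br^n)$ and $m \ge 0$, so the integral is a well-defined finite number, and since $m \ge 0$ and the admissible range of $s$ grows with $t$, the function $t \mapsto B(t,w)$ is non-decreasing, hence Borel and locally bounded, so that the distributional derivative in~\eqref{r0prop1} is meaningful; finiteness of $B$ will follow from the upper bound established next.

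The lower bound ${\varrho}_T \ge -m$ is immediate on taking $s = t$ in the supremum. For ${\varrho}_T \le m$, note that for every admissible $s$ we have $t - s \le 2T$, hence $\frac{1}{2T}\int_s^t m(r,w)\,dr \le \sup_{s \le r \le t} m(r,w)$; applying~\eqref{qmaxwd} with $t_1 = s$, $t_2 = t$ gives $\sup_{s \le r \le t} m(r,w) \le {\delta}_0(t,w) - {\delta}_0(s,w) + m(s,w) + m(t,w)$, so the expression inside the supremum is at most ${\delta}_0(t,w) + m(t,w)$, uniformly in $s$. Thus $B(t,w) \le {\delta}_0(t,w) + m(t,w)$, i.e.\ ${\varrho}_T \le m$. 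For the growth rate, fix $-T \le t_1 < t_2 \le T$ and choose $s_1$ nearly attaining the supremum defining $B(t_1,w)$; since $s_1 \le t_1 \le t_2$, $s_1$ is admissible at $t_2$, and the defining expression at $(s_1,t_2)$ equals that at $(s_1,t_1)$ plus $\frac{1}{2T}\int_{t_1}^{t_2} m(r,w)\,dr$. Letting the near-optimal $s_1$ tend to optimal yields $B(t_2,w) - B(t_1,w) \ge \frac{1}{2T}\int_{t_1}^{t_2} m(r,w)\,dr$, which is precisely $T\,\partial_t({\delta}_0 + {\varrho}_T) \ge m/2$ in $\Cal D'(\br)$ for $|t| < T$.

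It remains to record the regularity in~$w$. For fixed $s, t$ the map $w \mapsto {\delta}_0(s,w) - m(s,w) + \frac{1}{2T}\int_s^t m(r,w)\,dr$ is Lipschitz with a constant independent of $s, t$ and $T$: the function $w \mapsto {\delta}_0(s,w)$ has $g^\sh$-Lipschitz constant $1$ (see the remark after Definition~\ref{d0deforig}), $w \mapsto m(s,w)$ is uniformly Lipschitz by Proposition~\ref{h0slow}, and $w \mapsto \frac{1}{2T}\int_s^t m(r,w)\,dr$ inherits the same Lipschitz constant because $|t-s| \le 2T$. A supremum of functions with a common Lipschitz constant has that same constant, so $w \mapsto B(t,w)$, hence $w \mapsto {\varrho}_T(t,w)$, is uniformly Lipschitz for every $t$ and every $0 < T$; together with $|{\varrho}_T| \le m \in L^\infty$ this gives ${\varrho}_T \in L^\infty(\br\times T^*\br^n)$, completing the proof.

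The only step with genuine content is the bound ${\varrho}_T \le m$, which is exactly where the convexity inequality~\eqref{qmaxwd} of Proposition~\ref{qmaxpropo} enters; once the extremal function $B$ has been written down correctly, the remaining verifications are routine bookkeeping.
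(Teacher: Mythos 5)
Your proposal is correct and is essentially identical to the paper's proof: your $B(t,w)$ is exactly the paper's ${\delta}_0+{\varrho}_T$ (the paper defines ${\varrho}_T$ directly as the same supremum with ${\delta}_0(t,w)$ subtracted inside), the upper bound $|{\varrho}_T|\le m$ comes from the same application of Proposition~\ref{qmaxpropo} combined with $\frac{1}{2T}\int_s^t m\,dr\le\sup_{[s,t]}m$, the lower bound from the same choice $s=t$, and the derivative bound from the same observation that $B-\frac{1}{2T}\int_0^t m\,dr$ is non-decreasing. No gaps.
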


\begin{proof}
(We owe this argument to Lars H\"ormander ~\cite{ho:NT}.) Let
\begin{equation}\label{r0def}
 {\varrho}_{T}(t,w) = \sup_{-T \le s
 \le t}\left({\delta}_0(s,w) - {\delta}_0(t,w) + \frac{1}{2T} \int_s^t
 m(r,w)\,dr - m(s,w)\right)
\end{equation}
for $|t| \le T$, then
\begin{multline*}
{\delta}_0(t,w) + {\varrho}_T(t,w) = \sup_{-T \le s
 \le t}\left({\delta}_0(s,w) - \frac{1}{2T}\int_0^s m(r,w)\,dr -
 m(s,w)\right) \\+ 
\frac{1}{2T} \int_0^t m(r,w)\,dr
\end{multline*}
which immediately gives ~\eqref{r0prop1} since the
supremum is non-decreasing. Since  $w \mapsto {\delta}_0(t,w)$ and $w \mapsto
m(t,w)$ are uniformly   
Lipschitz continuous by Proposition~\ref{h0slow}, we
find by taking the supremum that $w \mapsto {\varrho}_T(t,w)$
is uniformly Lipschitz continuous.
We find from Proposition~\ref{qmaxpropo} that
\begin{equation*}
{\delta}_0(s,w) - {\delta}_0(t,w) + \frac{1}{2T}\int_s^t m(r,w)\,dr -
m(s,w) \le m(t,w) \qquad -T \le s \le t \le T
\end{equation*}
By taking the supremum, we obtain that $ -m(t,w) \le
{\varrho}_T(t,w) \le m(t,w)$ when $|t| \le T$, which proves the result.
\end{proof}

\section{The Wick quantization}\label{norm}

In order to define the multiplier we shall use the Wick quantization,
and we shall also define the function spaces that we shall use.  As
before, we shall assume that $g^\sh = 
(g^\sh)^{\sigma}$ and the coordinates are chosen so that $g^\sh(w) =
|w|^2$.  For $a \in L^\infty(T^*\br^n)$ we define the Wick
quantization:
\begin{equation}\label{defwick}
a^{Wick}(x,D_x)u(x) = \int_{T^*\br^n}a(y,{\eta})
{\Sigma}^w_{y,{\eta}}(x,D_x)u(x)\,dyd{\eta}\qquad u \in  \Cal S(\br^n)
\end{equation}
using the projections ${\Sigma}^w_{y,{\eta}}(x,D_x)$ with Weyl symbol
$${\Sigma}_{y,{\eta}}(x,{\xi}) =
{\pi}^{-n}\exp(-g^\sharp(x-y,{\xi}-{\eta}))
$$
(see~\cite[Appendix~B]{de:suff} or~\cite[Section~4]{ln:coh}).
We find that 
$a^{Wick}$: $ \Cal S(\br^n) \mapsto \Cal S'(\br^n)$ so that 
$(a^{Wick})^* = (\overline a)^{Wick}$,
\begin{equation} \label{poswick}
a \ge 0 
\implies
\sw{a^{Wick}(x,D_x)u,u} \ge
0 \qquad u \in \Cal S(\br^n)
\end{equation}
and
$ 
\mn{a^{Wick}(x,D_x)}_{\Cal L(L^2(\br^n))} \le
\mn{a}_{L^\infty(T^*\br^n)}, 
$ 
which is the main advantage with the Wick quantization
(see \cite[Proposition~4.2]{ln:coh}).  
Now if $a_t(x,{\xi}) \in L^\infty(\br \times T^*\br^n)$ depends on a
parameter ~$t$, then we find that 
\begin{equation}\label{intwick}
 \int_\br \sw{a^{Wick}_tu ,u} {\phi}(t)\, dt =
 \sw{A_{\phi}^{Wick}u, u}  \qquad u \in \Cal S(\br^n)
\end{equation}
where $A_{\phi}(x,{\xi}) = \int_\br a_t(x,{\xi}) {\phi}(t)\, dt$. In
fact, if $a \in L^1$ then this follows from the Fubini theorem, in
general we obtain this by cutting off $a_t$ on large sets in $T^*\br^n$ and using
dominated convergence.
We obtain from the definition that
$a^{Wick} = a_0^{w}$ where
\begin{equation}\label{gausreg}
a_0(w) = {\pi}^{-n}\int_{T^*\br^n} a(z)\exp(-|w-z|^2)\,dz 
\end{equation}
is the Gaussian regularization, thus Wick operators with symmetric symbols
have symmetric Weyl symbols.

We also have the following result about the composition of Wick
operators according to the proofs of Proposition~3.4 in ~\cite{ln:wick}
and Lemma~A.1.5 in ~\cite{ln:cutloss}.

\begin{rem}\label{wickcomp}
Let $a(w)$, $b(w) \in L^\infty$, and let $m_1 $, $m_2 $ be bounded weights
for $g^\sh$. If $|a| \le m_1$ and
$|b'| = |\partial b| \le m_2$, then 
\begin{equation}\label{wickcomp1}
 a^{Wick}b^{Wick} = (ab )^{Wick} + r^w
\end{equation}
with $r \in S(m_1m_2,g^\sh)$. In the case when $a$, $b$ are real valued, $|a| \le
m_1$ and $|b''| \le m_2$, we obtain that 
\begin{equation}\label{wickcomp2}
 \re\left(a^{Wick}b^{Wick} \right) = \left(ab  - \frac{1}{2}a'\cdot
 b'\right)^{Wick} + r^w  
\end{equation}
with $r \in S(m_1m_2,g^\sh)$. Here $a'$ is the distributional
derivative of ~$a \in L^\infty$ and $b'$ is Lipschitz continuous, so
the product is well-defined in $L^\infty$. 
\end{rem}  


If $A \in L^\infty(T^*\br^n)$ is an $M \times N$ system, then we can define $A^{Wick}$ 
by ~\eqref{defwick} on $u \in \Cal S(\br^n, \bc^N)$. These
operators have the same properties as the scalar operators, but 
of course we need that $M = N$ in order for ~\eqref{poswick} to hold.

In the following, we shall assume that
$G =Hg^\sh \le g^\sh$ is a slowly varying metric satisfying 
\begin{equation}\label{gentemp}
H(w) \le C_0 H(w_0) (1 + |w-w_0|)^{N_0} 
\end{equation}
and that $m$ is a weight for $G$ satisfying~~\eqref{gentemp} with ~~$H$
replaced by ~~$m$. This means that ~ $G$ and~$m$ are strongly ${\sigma}$
~temperate in the sense of \cite[Definition~7.1]{bc:sob}. Recall the
symbol class $S^+(1,g^\sh)$ defined by Definition~\ref{s+def}.

\begin{prop}\label{propwick} 
Assume that $a \in L^\infty(T^*\br^n)$ is a $N \times N$ system such that $|a| \le Cm$, then
$ a^{Wick} = a_0^w$ where $a_0\in S(m,g^\sh)$ is given by
~\eqref{gausreg}. If $a \in S(m,G)$ for $G = Hg^\sh$ then $a_0 = a$ modulo symbols in
$S(mH,G)$. If $|a| \le Cm$ and $a = 0$ in a fixed
$G$~ball with center ~$w$, then $a \in S(mH^N,G)$ near ~$w$ for any $N$. If 
$a$ is Lipschitz continuous then we have $a_0\in S^+(1, g^\sh)$.
If $a(t,w)$ and $g(t,w) \in L^\infty(\br\times T^*\br^n)$ are  $N \times N$ systems
and $\ddt a(t,w) \ge g(t,w)$ in $\Cal D'(\br)$ for almost all $w
\in T^*\br^n$, then we find $\sw{\ddt (a^{Wick})u,u} \ge \sw{g^{Wick}u,u}$
in $\Cal D'(\br)$ for $u \in \Cal S(\br^n, \bc^N)$.
\end{prop}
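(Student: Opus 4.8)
The plan is to prove the five assertions of Proposition~\ref{propwick} essentially one at a time, since each is a separate regularity statement about the Gaussian regularization $a_0$ defined by \eqref{gausreg} together with one positivity statement. The common tool is that $a^{Wick} = a_0^w$ with $a_0$ obtained by convolving $a$ with the Gaussian $\pi^{-n}\exp(-|\cdot|^2)$, so all the symbol estimates will follow by differentiating under the integral sign and using that each derivative falling on the Gaussian produces a polynomial factor that is integrable against the Gaussian.

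First I would treat the crude bound: if $|a|\le Cm$ with $m$ a weight for $g^\sh$, then differentiating \eqref{gausreg} gives $a_0^{(k)}(w) = \pi^{-n}\int a(z)\,\partial_w^k \exp(-|w-z|^2)\,dz$; writing $\partial_w^k\exp(-|w-z|^2) = P_k(w-z)\exp(-|w-z|^2)$ with $P_k$ a polynomial, and using $m(z)\le C m(w)(1+|w-z|)^{N_0}$ (the $g^\sh$-weight property), the polynomial times Gaussian is integrable and we get $|a_0^{(k)}(w)|\le C_k m(w)$, i.e. $a_0\in S(m,g^\sh)$. For the second assertion, when $a\in S(m,G)$ with $G = Hg^\sh$, I would Taylor-expand $a$ around $w$ inside the integral: $a(z) = a(w) + \langle a'(w), z-w\rangle + O(|a''|)$, where the linear term integrates to zero by the symmetry of the Gaussian, and the remainder is controlled by $\sup |a''|\lesssim mH$ on the relevant scale (using slow variation of $G$ to handle the tails), giving $a_0 - a \in S(mH,G)$; the same expansion carried to higher order gives the full asymptotic expansion, though only the first correction is asserted here. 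The third assertion (almost-diagonality of Wick quantization) follows because if $a$ vanishes on a fixed $G$-ball about $w$, then in the integral defining $a_0(w)$ and its derivatives the integrand is supported where $|w-z|\gtrsim H^{-1/2}(w)$ up to slow variation, so the Gaussian gives a factor $\exp(-cH^{-1}(w))$ which beats any power of $H$; more carefully one localizes with a cutoff and uses the $\sigma$-temperance to propagate the gain — this is the standard Wick-calculus lemma, essentially Lemma~A.1.5 of \cite{ln:cutloss}.

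For the $S^+$ statement: if $a$ is (uniformly) Lipschitz, then $a'$ exists a.e. and is bounded, and for $j\ge 1$ we differentiate $a_0 = a * \pi^{-n}e^{-|\cdot|^2}$ by putting one derivative on $a$ (getting the bounded $a'$) and the remaining $j-1$ on the Gaussian, so $|a_0^{(j)}|\le C_j$ for $j\ge 1$, which by Definition~\ref{s+def} is exactly $a_0\in S^+(1,g^\sh)$ (taking the weight there to be $m\equiv 1$). Finally, the positivity/monotonicity statement: if $\partial_t a(t,w)\ge g(t,w)$ in $\Cal D'(\br)$ for a.e.\ $w$, then for $0\le \phi\in C_0^\infty(\br)$ we have $-\int \partial_t\phi(t)\, a(t,w)\,dt \ge \int \phi(t) g(t,w)\,dt$ pointwise in $w$; integrating this against $\Sigma^w_{y,\eta}u$ over $T^*\br^n$ and pairing with $u$ — using \eqref{intwick} to move the $t$-integral through the Wick quantization and \eqref{poswick} applied to the nonnegative combination — yields $-\int \partial_t\phi(t)\,\sw{a^{Wick}u,u}\,dt\ge \int\phi(t)\,\sw{g^{Wick}u,u}\,dt$, which is precisely $\frac{d}{dt}\sw{a^{Wick}u,u}\ge \sw{g^{Wick}u,u}$ in $\Cal D'(\br)$; here one must be slightly careful that $g$ need not be nonnegative, but it is enough to apply \eqref{poswick} to $\partial_t a - g$ at the level of the time-averaged symbol and note the averaging is linear, so the whole inequality is preserved.

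The routine differentiation-under-the-integral estimates are all straightforward; the step I expect to be the genuine obstacle is the third one, the almost-diagonality estimate $a = 0$ on a $G$-ball $\implies a\in S(mH^N,G)$ near $w$, since making the exponential gain $\exp(-cH^{-1})$ interact correctly with the $G$-derivatives (which themselves carry factors $H^{1/2}$) and with the slow variation of $H$ on the tails of the Gaussian requires the careful bookkeeping of \cite{ln:wick} and \cite{ln:cutloss} rather than a one-line argument; everything else is bookkeeping with Gaussian moments and the weight inequality \eqref{gentemp}.
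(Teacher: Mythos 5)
Your proposal is correct and follows essentially the same route as the paper: differentiation under the integral with the weight inequality for the first claim, a second-order Taylor expansion (with the linear term killed by Gaussian symmetry) for the second, the exponential gain $\exp(-c\,\varepsilon^2 H^{-1}(w))=\Cal O(H^N(w))$ from integrating only over $|z-w|\ge \varepsilon H^{-1/2}(w)$ for the third, placing a single derivative on the Lipschitz symbol for the $S^+$ claim, and pairing the distributional inequality with a nonnegative test function and invoking \eqref{poswick}--\eqref{intwick} for the monotonicity claim. The only divergence is one of emphasis: the third assertion, which you flag as the genuine obstacle requiring the bookkeeping of the Wick-calculus references, is handled in the paper by exactly the direct one-line computation you sketch, since the exponential factor beats every power of $H^{-1/2}$ produced by the $G$-derivatives.
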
 

Observe that the
results are uniform in the metrics and weights.
By localization we find, for example, that if $|a| \le Cm$ and $a \in
S(m,G)$ in a $G$~neighborhood of ~$w_0$, then $a_0 =a$ modulo
$S(mH,G)$ in a smaller $G$~neighborhood of ~$w_0$. These results are
well known, but for convenience we give a short proof. 

\begin{proof} 
Since $a$ is measurable satisfying $|a| \le C m$, where $m(z) \le C_0
m(w)(1 + |z-w|)^{N_0}$ by ~\eqref{gentemp}, we find that
$a^{Wick} = a_0^{w}$ where $a_0 = \Cal O(m)$ is given by
~\eqref{gausreg}. By differentiating on the exponential factor, we find
$a_0 \in S(m,g^\sh)$.

If $a = 0$ in a $G$
ball of radius ${\varepsilon}>0$ and center at $w$, then we can write
\begin{equation*}
{\pi}^{n}a_0(w) = 
\int_{|z-w| \ge {\varepsilon}H^{-1/2}(w)} a(z)\exp(-|w-z|^2)\,dz =\Cal
O(m(w) H^N(w)) 
\end{equation*}
for any $N$ even after repeated differentiation. If $a \in S(m,G)$ then
Taylor's formula gives
\begin{equation*}
a_0(w) = a(w) + {\pi}^{-n} \int_0^1\int_{T^*\br^n} (1-{\theta})\w{a''(w +
{\theta}z)z,z} e^{-|z|^2}\,dzd{\theta}
\end{equation*}
where $a'' \in S(mH,G)$ because $G = Hg^\sh$.  Since $m(w + {\theta}z)
\le C_0 m(w)(1 + |z|)^{N_0}$ and $H(w + {\theta}z) \le C_0H(w)(1 +
|z|)^{N_0} $ when $|{\theta}| \le 1$, we find that $a_0(w) = a(w)$
modulo $S(mH,G)$.  
Now, the  Lipschitz continuity of $a$ means that 
$\partial a \in L^\infty(T^*\br^n)$. 
Since $\partial a_0(w) = {\pi}^{-n}\int_{T^*\br^n}
\partial a(z)\exp(-|w-z|^2)\,dz $, we obtain that $a_0\in S^+(1, g^\sh)$.

For the final claim, we note that $ -\int a(t,w) {\phi}'(t)\,dt \ge \int
g(t,w){\phi}(t)\,dt$ for all $0 \le {\phi} \in C_0^\infty(\br) $ and
almost all $w\in T^*\br^n$, which by~\eqref{poswick}
and~\eqref{intwick} gives
\begin{equation*}
-\int \sw{a^{Wick}(t,x,D_x)u,u} {\phi}'(t)\,dt \ge \int
\sw{g^{Wick}(t,x,D_x)u,u}{\phi}(t)\,dt \qquad 0 \le {\phi} \in
  C_0^\infty(\br)
\end{equation*}
for $u \in  \Cal S(\br^n, \bc^N)$.
\end{proof}

We shall compute the Weyl symbol for the Wick operator
$({\delta}_0 + {\varrho}_T)^{Wick}$, where ~${\varrho}_T$ is given by
Proposition~\ref{apsdef}.  In the
following we shall suppress the $t$~~ variable.

\begin{prop}\label{wickweyl}
Let $B = {\delta}_0 + {\varrho}_0$, where ${\delta}_0$ is given by
Definition~~\ref{d0deforig} and ${\varrho}_0$ is 
real valued and Lipschitz continuous,
satisfying $|{\varrho}_0| \le m$, where ~$m \le
H^{1/2}\w{{\delta}_0}^2/2 \le \w{{\delta}_0}/2$ is a weight
for~$g^\sh$. Then we find 
\begin{equation*}
B^{Wick} = b^w 
\end{equation*}
where $b= {\delta}_1 + {\varrho}_1 \in S(\w{{\delta}_0},g^\sh)\bigcap 
S^+(1,g^\sh)$ is real, ${\delta}_1 \in S(H^{-1/2},g^\sh)\bigcap 
S^+(1,g^\sh)$, and ${\varrho}_1 \in S(m, g^\sh)\bigcap S^+(1, g^\sh)$.
Also, there exists ${\kappa}_2 >0$ so that ${\delta}_1 = {\delta}_0$
modulo $S(H^{1/2},G)$ when $\w{{\delta}_0} \le
{\kappa}_2H^{-1/2}$, which gives $b = {\delta}_0$ modulo
$S(H^{1/2}\w{{\delta}_0}^2, g^\sh)$.
For any ${\lambda} > 0$ we find that $|{\delta}_0| \ge
{\lambda} H^{-1/2}$ and $H^{1/2} \le {\lambda}/3$ imply
that $\sgn(B) = \sgn ({\delta}_0)$ and $ 
|B| \ge {\lambda}H^{-1/2}/3$. 
\end{prop}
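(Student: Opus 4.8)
The plan is to apply the results on Wick operators collected in Proposition~\ref{propwick} directly to $B = {\delta}_0 + {\varrho}_0$. First, since $B$ is Lipschitz continuous (being a sum of the Lipschitz functions ${\delta}_0$ and ${\varrho}_0$) and $|B| \le |{\delta}_0| + m \le \w{{\delta}_0} + \w{{\delta}_0}/2 = \tfrac32\w{{\delta}_0}$, Proposition~\ref{propwick} gives $B^{Wick} = b^w$ with $b = \Cal O(\w{{\delta}_0}) \in S(\w{{\delta}_0}, g^\sh)$, and moreover $b \in S^+(1,g^\sh)$ because $B$ is Lipschitz. I would then split $b = {\delta}_1 + {\varrho}_1$ where ${\delta}_1 = {\delta}_0^{Wick}$ (i.e.\ the Weyl symbol of ${\delta}_0^{Wick}$, which is the Gaussian regularization of ${\delta}_0$) and ${\varrho}_1 = {\varrho}_0^{Wick}$ similarly. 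Applying Proposition~\ref{propwick} again: ${\delta}_0$ is Lipschitz with $|{\delta}_0| \le H^{-1/2}$, so ${\delta}_1 \in S(H^{-1/2},g^\sh)\bigcap S^+(1,g^\sh)$; and ${\varrho}_0$ is Lipschitz with $|{\varrho}_0| \le m$ a weight for $g^\sh$, so ${\varrho}_1 \in S(m,g^\sh) \bigcap S^+(1,g^\sh)$. Since ${\delta}_0, {\varrho}_0$ are real, so are their Gaussian regularizations, hence $b$ is real.

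For the statement that ${\delta}_1 = {\delta}_0$ modulo $S(H^{1/2},G)$ when $\w{{\delta}_0} \le {\kappa}_2 H^{-1/2}$: here I would invoke Proposition~\ref{ffactprop}, which says that for $\w{{\delta}_0} \le {\kappa}_1 H^{-1/2}$ we have ${\delta}_0 = f/{\alpha}_0 \in S(H^{-1/2},G)$. On such a region the ``$S(m,G)$ implies $a_0 = a$ modulo $S(mH,G)$'' clause of Proposition~\ref{propwick} applies with $m = H^{-1/2}$, giving ${\delta}_1 = {\delta}_0$ modulo $S(H^{-1/2}\cdot H, G) = S(H^{1/2},G)$ in a slightly smaller $G$-neighborhood; choosing ${\kappa}_2 < {\kappa}_1$ appropriately (to absorb the shrinkage of the neighborhood in the localized version of Proposition~\ref{propwick}) gives the claim. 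Combining with $|{\varrho}_1| \le Cm \le CH^{1/2}\w{{\delta}_0}^2/2$ and $S(H^{1/2},G) \subset S(H^{1/2}\w{{\delta}_0}^2,g^\sh)$ (using $G \le g^\sh$ and $\w{{\delta}_0} \ge 1$), we get $b = {\delta}_0$ modulo $S(H^{1/2}\w{{\delta}_0}^2, g^\sh)$.

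For the final sign statement, suppose $|{\delta}_0| \ge {\lambda} H^{-1/2}$ and $H^{1/2} \le {\lambda}/3$. Then $|{\varrho}_0| \le m \le H^{1/2}\w{{\delta}_0}^2/2$; I would want to bound this by something like $|{\delta}_0|/3$. Using $\w{{\delta}_0}^2 \le ?$ — here one needs $\w{{\delta}_0} \le 2|{\delta}_0|$, which holds as soon as $|{\delta}_0| \ge 1$, and this follows from $|{\delta}_0| \ge {\lambda} H^{-1/2} \ge {\lambda} \cdot 3/{\lambda}\cdot$... wait, one must be slightly careful: $H^{-1/2} \ge 3/{\lambda}$ gives $|{\delta}_0| \ge {\lambda} H^{-1/2} \ge 3$, hence $\w{{\delta}_0} \le (4/3)|{\delta}_0|$, so $m \le H^{1/2}\w{{\delta}_0}^2/2 \le ({\lambda}/3)(4/3)^2|{\delta}_0|^2/2 / |{\delta}_0| \cdot |{\delta}_0|$ — this is getting clumsy, so more simply: $m \le H^{1/2}\w{{\delta}_0}^2/2$ and $H^{1/2}\w{{\delta}_0} \le H^{1/2}\cdot 2|{\delta}_0|$ when $|{\delta}_0|\ge 1$, while $H^{1/2}\le {\lambda}/3 \le |{\delta}_0| H^{1/2}/( {\lambda} H^{-1/2})\cdot{\lambda}/3$... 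The clean way: $H^{1/2}\w{{\delta}_0} \le H^{1/2}\cdot 2|{\delta}_0| \le 2|{\delta}_0| H^{1/2}$ and since $|{\delta}_0|\ge {\lambda}H^{-1/2}$ we have $H^{1/2} \le |{\delta}_0|/{\lambda}\cdot$ no — rather $H^{1/2}\w{{\delta}_0}/2 \le |{\delta}_0|H^{1/2} \le |{\delta}_0|\cdot({\lambda}/3)$ is false dimensionally. I will instead argue: since the Gaussian regularization satisfies $|{\delta}_1 - {\delta}_0| \le \sup |{\delta}_0'| \cdot \Cal O(1) \le \Cal O(1)$ uniformly (as $|{\delta}_0'|\le 1$), and $|{\varrho}_1| \le \Cal O(m) \le \Cal O(H^{1/2}\w{{\delta}_0}^2)$, the total error $|b - {\delta}_0|$ is controlled; then $|{\delta}_0| \ge {\lambda}H^{-1/2}$ with $H^{1/2}$ small forces $|{\delta}_0|$ large and the error to be a small fraction of $|{\delta}_0|$, giving $\sgn(B) = \sgn(b) = \sgn({\delta}_0)$ and $|B| \ge |{\delta}_0|/3 \ge {\lambda}H^{-1/2}/3$. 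The main obstacle is getting this last chain of elementary inequalities tuned so that the constant is exactly $1/3$; I would do the bookkeeping carefully using $m \le H^{1/2}\w{{\delta}_0}^2/2$ together with $\w{{\delta}_0} \le C|{\delta}_0|$ (valid since $|{\delta}_0|\ge {\lambda}H^{-1/2} \ge 3$) and $H^{1/2}|{\delta}_0| \le H^{1/2}\cdot({\lambda}/3)^{-1}|{\delta}_0| H^{1/2} $ ... in short, the only real work is the final arithmetic, and everything structural follows from Propositions~\ref{propwick}, \ref{ffactprop}, and \ref{h0slow}.
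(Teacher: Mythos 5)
The first two parts of your argument are correct and coincide with the paper's proof: you set ${\delta}_1^w = {\delta}_0^{Wick}$, ${\varrho}_1^w = {\varrho}_0^{Wick}$, read off the symbol classes from Proposition~\ref{propwick} using $|{\delta}_0| \le H^{-1/2}$, $|{\varrho}_0|\le m$ and the Lipschitz continuity, and for ${\delta}_1={\delta}_0$ modulo $S(H^{1/2},G)$ you combine the Lipschitz continuity of ${\delta}_0$ (to propagate the bound $\w{{\delta}_0}\lesssim{\kappa}H^{-1/2}$ to a fixed $G$-neighborhood) with Proposition~\ref{ffactprop} and the localized version of Proposition~\ref{propwick}. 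That is exactly the paper's route.

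The last claim, however, is left genuinely unfinished, and the reason you got tangled is that you are estimating the wrong object. The assertion $\sgn(B)=\sgn({\delta}_0)$ and $|B|\ge{\lambda}H^{-1/2}/3$ concerns the \emph{unregularized} symbol $B={\delta}_0+{\varrho}_0$, not the Weyl symbol $b$; no property of the Gaussian regularization (your bound $|{\delta}_1-{\delta}_0|=\Cal O(1)$, or $|{\varrho}_1|\le\Cal O(m)$) is needed or relevant here, and the detour through $m\le H^{1/2}\w{{\delta}_0}^2/2$ only loses constants (it yields $|{\varrho}_0|\le\tfrac89|{\delta}_0|$ at best, which is not enough for the factor $1/3$). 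The correct and very short argument uses the \emph{other} hypothesis on $m$, namely $m\le\w{{\delta}_0}/2$: if $|{\delta}_0|\ge{\lambda}H^{-1/2}$ then $1\le H^{1/2}|{\delta}_0|/{\lambda}$, so
\begin{equation*}
|{\varrho}_0|\le m\le \w{{\delta}_0}/2=(1+|{\delta}_0|)/2\le
\bigl(1+H^{1/2}/{\lambda}\bigr)|{\delta}_0|/2\le \tfrac{2}{3}|{\delta}_0|
\qquad\text{when } H^{1/2}\le{\lambda}/3,
\end{equation*}
whence $\sgn(B)=\sgn({\delta}_0)$ and $|B|\ge|{\delta}_0|/3\ge{\lambda}H^{-1/2}/3$. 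With this replacement your proof is complete and agrees with the paper's.
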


\begin{proof}
Let ${\delta}_0^{Wick} = {\delta}_1^w$ and ${\varrho}_0^{Wick} =
{\varrho}_1^w$.  Since $|{\delta}_0|\le \w{{\delta}_0} \le H^{-1/2}$, $|{\varrho}_0|
\le m \le \w{{\delta}_0}/2$ and the symbols are real valued, we obtain from
Proposition~\ref{propwick} that $b \in
S(\w{{\delta}_0},g^\sh)$, ${\delta}_1 \in S(H^{-1/2},g^\sh)$
and ${\varrho}_1 \in S(m, g^\sh)$ are real valued. Since
${\delta}_0$ and ${\varrho}_0$ are 
uniformly Lipschitz continuous, we find that ${\delta}_1$ and ${\varrho}_1 \in
S^+(1,g^\sh)$ by Proposition~\ref{propwick}.

If $\w{{\delta}_0} \le {\kappa}H^{-1/2}$ at ~$w_0$ for sufficiently
small ${\kappa} >0$, then we find by the Lipschitz continuity of~
${\delta}_0$ and the slow variation of ~$G$ that $\w{{\delta}_0} \le
C_0{\kappa}H^{-1/2}$ in a fixed $G$ neighborhood
${\omega}_{\kappa}$ of ~$w_0$ (depending on ${\kappa}$). For ${\kappa}
\ll 1$ we find ${\delta}_0 \in S(H^{-1/2}, G)$ in ${\omega}_{\kappa}$ by
Proposition~\ref{ffactprop}, thus ${\delta}_1 =
{\delta}_0$ modulo $S(H^{1/2},G)$ near ~$w_0$ by
Proposition~\ref{propwick} after localization.

When $|{\delta}_0| \ge {\lambda}H^{-1/2}  > 0$ at
$w_0$, then we find that 
$$|{\varrho}_0| \le m \le \w{{\delta}_0}/2 \le
(1 + H^{1/2}/{\lambda})|{\delta}_0|/2$$
We obtain that $|{\varrho}_0| \le 2|{\delta}_0|/3$ so $\sgn(B) = \sgn ({\delta}_0)$ and
$|B| \ge |{\delta}_0|/3 \ge
{\lambda}H^{-1/2}/3 $ 
when $H^{1/2}  \le {\lambda}/3$, which completes the proof.
\end{proof}

Let $m$ be given by Definition~\ref{h0def}, then
$m$ is a weight for $g^\sh$ according to
Proposition~\ref{h0slow}. We are going to use the symbol classes
$S(m^k, g^\sh)$, $k \in \br$.

\begin{defn}\label{Hdef}
Let  $H(m^k,g^\sh)$,
 be the Hilbert space given by
~\cite[Definition~4.1]{bc:sob} so that
\begin{equation}\label{7.7}
u \in  H(m^k,g^\sh) \iff
a^wu \in L^2  \qquad  \forall\, a \in
S(m^k, g^\sh)\qquad k \in \br
\end{equation}
We let $\mn{u}_{{k}}$ be the norm of 
$H(m^{k},g^\sh)$.
\end{defn}

This Hilbert space has the following properties: $\Cal S$ is dense in
$H(m^k,g^\sh)$, the dual of $H(m^k,g^\sh)$ is naturally identified
with $H(m^{-k},g^\sh)$, and if $u \in H(m^k,g^\sh)$ then $u =
a_0^w v$ for some $v \in L^2(\br^n)$ and $a_0 \in S(m^{-k}, g^\sh)$ 
(see~\cite[Corollary~6.7]{bc:sob}). 
It follows that $a^w \in \op
S(m^k, g^\sh)$ is
bounded: 
\begin{equation}\label{nest0}
 u \in H(m^j,g^\sh) \mapsto a^wu \in 
H(m^{j-k},g^\sh) 
\end{equation} 
with bound only depending on the
seminorms of ~$a$.

We recall Proposition~6.5 in ~\cite{de:cut}, which shows that the topology in $H(m^{1/2},
g^\sh)$ can be defined by the operator $m^{Wick}$.

\begin{prop}\label{wprop} 
Let $B = {\delta}_0 + {\varrho}_0$, where
${\delta}_0$ is given by
Definition~~\ref{d0deforig} and $|{\varrho}_0| \le m$.
Then there exist positive constants $c_1$, $c_2$ and $C_0$ such that
\begin{equation}\label{west3}
c_1 h^{1/2}(\mn{B^{Wick}u}^2 + \mn u^2) \le c_2 \mn{u}_{{1/2}}^2 \le
\sw{m^{Wick}u,u}\le 
C_0 \mn{u}_{{1/2}}^2\qquad u \in \Cal S(\br^n)
\end{equation}
The constants only depend on the seminorms of $f$
in $L^\infty(\br, S(h^{-1},hg^\sh))$.
\end{prop}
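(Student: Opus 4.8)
The plan is to establish the central equivalence $\sw{m^{Wick}u,u}\cong\mn{u}_{1/2}^{2}$ and then to read off the leftmost inequality from it, using only crude pointwise bounds together with the positivity~\eqref{poswick} of the Wick quantization and the mapping property~\eqref{nest0}. All constants will depend only on the seminorms of $f$ in $L^\infty(\br,S(h^{-1},hg^\sh))$, inherited from the corresponding uniformity in Propositions~\ref{h0slow} and~\ref{propwick}; I suppress the $t$~variable as in the statement.

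First I would record the structure of $m^{Wick}$. Since $m$ is a weight for $g^\sh$ by Proposition~\ref{h0slow}, Proposition~\ref{propwick} gives $m^{Wick}=m_0^{w}$ with $m_0\in S(m,g^\sh)$ the Gaussian regularization~\eqref{gausreg}; the $g^\sh$~slow variation of $m$ together with positivity of the Gaussian then yields $c\,m\le m_0\le C\,m$ pointwise, so $m_0$ is a positive \emph{elliptic} element of $S(m,g^\sh)$, with $m_0^{-1}\in S(m^{-1},g^\sh)$. For the upper bound $\sw{m^{Wick}u,u}\le C_0\mn{u}_{1/2}^{2}$ I would note that $m_0^{w}$ is self-adjoint (as $m$ is real), that by~\eqref{nest0} it maps $H(m^{1/2},g^\sh)$ boundedly into $H(m^{-1/2},g^\sh)$, and that these spaces are in duality; hence $0\le\sw{m_0^{w}u,u}\le\mn{m_0^{w}u}_{-1/2}\mn{u}_{1/2}\le C_0\mn{u}_{1/2}^{2}$ for $u\in\Cal S$.

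The matching lower bound $c_2\mn{u}_{1/2}^{2}\le\sw{m^{Wick}u,u}$ is the step I expect to be the main obstacle, precisely because $g^\sh=(g^\sh)^{\sigma}$ is borderline: the Weyl calculus $\op S(\cdot,g^\sh)$ has no gain, so one cannot construct an approximate inverse of a square root of $m_0^{w}$ inside the class. Here I would invoke the functional calculus for positive elliptic quantizations in an admissible Weyl calculus from Bony--Chemin~\cite{bc:sob}: for the admissible metric $g^\sh$ and the weight $m$, the operator $m_0^{w}$, being positive, self-adjoint and elliptic in $S(m,g^\sh)$, is an isomorphism $H(m^{1/2},g^\sh)\to H(m^{-1/2},g^\sh)$ whose quadratic form is comparable to $\mn{\cdot}_{1/2}^{2}$; this uses the structure of the scale $H(m^{k},g^\sh)$ recalled after Definition~\ref{Hdef} (density of $\Cal S$, the duality $H(m^{k})'=H(m^{-k})$, the factorization $u=a_0^{w}v$ with $v\in L^2$, $a_0\in S(m^{-k},g^\sh)$, and~\eqref{nest0}). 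This gives the two middle inequalities of~\eqref{west3}, and is exactly the point where the Wick quantization substitutes for a calculus with gain in Lerner's approach.

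Finally I would deduce the leftmost inequality. By~\eqref{hhhest} one has $m\ge h^{1/2}/6$, so $m-h^{1/2}/6\ge0$ and~\eqref{poswick} gives $m^{Wick}\ge(h^{1/2}/6)\,\id$, whence $h^{1/2}\mn{u}^{2}\le6\sw{m^{Wick}u,u}\le6C_0\mn{u}_{1/2}^{2}$ by the middle inequality. For the term $h^{1/2}\mn{B^{Wick}u}^{2}$, I would use that $|B|\le|\delta_0|+|\varrho_0|\le\w{\delta_0}+m\le\tfrac{3}{2}\w{\delta_0}$ by~\eqref{hhhest}, so $B^{Wick}=B_0^{w}$ with $B_0\in S(\w{\delta_0},g^\sh)$ by Proposition~\ref{propwick} ($\w{\delta_0}=1+|\delta_0|$ being a weight for $g^\sh$ since $\delta_0$ is $1$-Lipschitz). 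Then $(B^{Wick})^{*}B^{Wick}=C^{w}$ with $C\in S(\w{\delta_0}^{2},g^\sh)$ by the Weyl composition (Lemma~\ref{calcrem} with $g_1=g_2=g^\sh$), and since $h^{1/2}\w{\delta_0}^{2}\le6m$ by~\eqref{hhhest} we obtain $h^{1/2}C\in S(m,g^\sh)$; hence $h^{1/2}\mn{B^{Wick}u}^{2}=\sw{(h^{1/2}C)^{w}u,u}\le\mn{(h^{1/2}C)^{w}u}_{-1/2}\mn{u}_{1/2}\le C\mn{u}_{1/2}^{2}$ by~\eqref{nest0} and duality, just as for $m_0^{w}$. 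Combining these estimates gives~\eqref{west3} for suitable positive $c_1,c_2,C_0$, with the stated uniformity inherited from the ingredients.
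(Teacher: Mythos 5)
First, a remark on the comparison itself: the paper offers no proof of this statement --- it is recalled verbatim from \cite{de:cut} (Proposition~6.5 there) --- so your attempt can only be measured against that reference. Your treatment of the outer inequalities is sound: the upper bound $\sw{m^{Wick}u,u}\le C_0\mn{u}_{{1/2}}^2$ via $m^{Wick}=m_0^w$ with $m_0\in S(m,g^\sh)$, \eqref{nest0} and the duality $H(m^{1/2})'=H(m^{-1/2})$ is correct, and so is the reduction of the leftmost inequality to the pointwise bounds $h^{1/2}\le 6m$ and $h^{1/2}\w{{\delta}_0}^2\le 6m$ from \eqref{hhhest}, together with $|B|\le\tfrac32\w{{\delta}_0}$ and Proposition~\ref{propwick}.

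The genuine gap sits exactly where you predicted it: the lower bound $c_2\mn{u}_{{1/2}}^2\le\sw{m^{Wick}u,u}$. The result you invoke --- that a self-adjoint operator with positive elliptic Weyl symbol in $S(m,g^\sh)$ is an isomorphism $H(m^{1/2},g^\sh)\to H(m^{-1/2},g^\sh)$ with quadratic form comparable to $\mn{\cdot}_{{1/2}}^2$ --- is not a theorem of Bony--Chemin and is false as a general statement for the borderline metric $g^\sh=(g^\sh)^{\sigma}$. With zero gain, ellipticity of the symbol produces no parametrix ($m_0^w(m_0^{-1})^w=1+r^w$ with $r\in S(1,g^\sh)$ only, not small), positivity of the symbol produces no operator lower bound (the sharp G\aa rding inequality is empty here), and even operator-positivity of $m_0^w$ gives only $\ge0$. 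Indeed, if such a functional calculus existed one would not need the Wick quantization at all; the paper stresses that \eqref{poswick} is ``the main advantage with the Wick quantization'' precisely because symbol positivity is worthless in $\op S(\cdot,g^\sh)$. The actual proof of the middle equivalence (in \cite{de:cut}, following Lerner \cite{ln:cutloss}) exploits the specific structure of the Wick quantization: writing $\sw{m^{Wick}u,u}=\int m(Y)\sw{{\Sigma}^w_{Y}u,u}\,dY$ with the rank-one projections of \eqref{defwick}, and comparing this, via the Bony--Chemin partition-of-unity/confinement characterization of $H(m^{1/2},g^\sh)$ and the slow variation of $m$ from Proposition~\ref{h0slow}, with $\sum_k m(Y_k)\mn{{\phi}_k^wu}^2$. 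That is a genuine argument about coherent states, not a formal consequence of ellipticity, and until it is supplied the central inequality of \eqref{west3} --- the one actually used in the proof of Proposition~\ref{mainprop} --- remains unproved.
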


In the following, we let $\mn {u(t)}$ be the $L^2$ norm of $x \mapsto
u(t,x) \in \bc^N$ in
$\br^{n}$ for fixed ~ $t$, and~ $\sw{u(t),v(t)}$ the corresponding
sesquilinear inner product. Let $\Cal B = \Cal B(L^2(\br^{n}), \bc^N)$ be
the set of bounded operators $L^2(\br^{n}, \bc^N)\mapsto L^2(\br^{n}, \bc^N)$.
We shall use operators which depend measurably on $t$ in the following
sense.

\begin{defn} 
We say that $t \mapsto A(t)$ is weakly measurable if $A(t) \in \Cal B$
for all ~$t$ and $t \mapsto A(t)u$ is weakly measurable for every $u
\in L^2(\br^{n}, \bc^N)$, i.e., $t \mapsto \sw{A(t)u,v}$ is measurable for
any ~$u$, $v \in L^2(\br^{n}, \bc^N)$. We say that $A(t) \in
L_{loc}^\infty(\br, \Cal B)$ if $t \mapsto A(t)$ is weakly measurable and
locally bounded in ~$\Cal B$. 
\end{defn}

If $A(t) \in L_{loc}^\infty(\br, \Cal B)$, then we find that the
function $t \mapsto \sw{A(t)u,v}\in L_{loc}^\infty(\br)$ has weak
derivative $\dtt \sw{A u,v} \in \Cal D'(\br)$ for any $u$, $v \in
L^2(\br^{n}, \bc^N)$, given by
$$\dtt \sw{A u,v}({\phi}) =
-\int\sw{A(t)u,v}{\phi}'(t)\,dt \qquad {\phi}(t) \in C_0^\infty(\br)
$$ 
If $u(t)$, $v(t) \in L^\infty_{loc}(\br,L^2 (\br^{n}, \bc^N))$ and
$A(t) \in L^\infty_{loc}(\br,\Cal B)$, then $t \mapsto
\sw{A(t)u(t),v(t)} \in L^\infty_{loc}(\br)$ is measurable.
We shall use the following multiplier estimate from ~\cite{de:NT}.

\begin{prop} \label{psest}
Let $P = D_t + iF(t)$ with $F(t) \in L^\infty_{loc}(\br, \Cal B)$. Assume that
$B(t)= B^*(t)  \in L_{loc}^\infty(\br, \Cal B)$, such that
\begin{equation}\label{multestcond} 
\dtt \sw{ B u,u}  + 2\re\sw{B u,F u} \ge 
 \sw{M u,u}\quad\text{in $\Cal D'(I)\qquad \forall\ u \in \Cal
   S(\br^{n}, \bc^N)$} 
\end{equation}
where $M(t) = M^*(t) \in L_{loc}^\infty(\br, \Cal B)$ and $I \subseteq
\br$ is open.  Then we have
\begin{equation}\label{pest}
\int  \sw{M u,u} \,dt \le 2\int \im\sw{Pu,B u} \,dt
\end{equation}
for $u \in C_0^1(I, \Cal S(\br^{n}, \bc^N))$.
\end{prop}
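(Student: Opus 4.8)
The plan is the classical multiplier computation, the only real work being to cope with the fact that $B(t)$ is merely weakly measurable and locally bounded in $t$ while the test function $u=u(t)$ genuinely varies in~$t$.

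First I would note that both sides of \eqref{pest} are absolutely convergent: since $u\in C_0^1(I,\Cal S(\br^n,\bc^N))$, the maps $t\mapsto u(t)$ and $t\mapsto\ddt u(t)$ are continuous into $L^2(\br^n,\bc^N)$ with support in a fixed compact $K\subset I$, and $B$, $F$, $M$ are bounded in~$\Cal B$ over~$K$, so $t\mapsto\sw{M(t)u(t),u(t)}$ and $t\mapsto\im\sw{Pu(t),B(t)u(t)}$ lie in $L^\infty(I)$ with support in~$K$. Using $P=D_t+iF$, $D_t=-i\ddt$ and $B^*=B$ one has the pointwise-in-$t$ identity
\[
2\im\sw{Pu(t),B(t)u(t)}=-2\re\sw{\ddt u(t),B(t)u(t)}+2\re\sw{B(t)u(t),F(t)u(t)} .
\]
If $B$ were $C^1$ in~$t$, this would be read ``backwards'': the chain rule with $B^*=B$ gives $\dtt\sw{B(t)u(t),u(t)}=\sw{B'(t)u(t),u(t)}+2\re\sw{\ddt u(t),B(t)u(t)}$, so $2\im\sw{Pu,Bu}=\sw{B'u,u}+2\re\sw{Bu,Fu}-\dtt\sw{Bu,u}$; integrating over~$I$ the total $t$-derivative drops out by compact support, and \eqref{multestcond} with $v=u(t)$ (legitimate pointwise in the $C^1$ case) yields $2\int_I\im\sw{Pu,Bu}\,dt=\int_I(\sw{B'u,u}+2\re\sw{Bu,Fu})\,dt\ge\int_I\sw{Mu,u}\,dt$.

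To make this rigorous without smoothness in~$t$ I would regularize: with $\rho_{\varepsilon}\ge0$ a standard approximate identity on~$\br$, set $B^{\varepsilon}=B*_t\rho_{\varepsilon}$ and likewise $F^{\varepsilon}$, $M^{\varepsilon}$, which are $C^\infty$ in~$t$ with values in~$\Cal B$ on an interval slightly smaller than~$I$ but still containing~$K$, with $(B^{\varepsilon})^*=B^{\varepsilon}$ and $(M^{\varepsilon})^*=M^{\varepsilon}$. Convolving the nonnegative distribution in \eqref{multestcond} with~$\rho_{\varepsilon}$ gives, for every $v\in\Cal S(\br^n,\bc^N)$ and every~$t$,
\[
\sw{(B^{\varepsilon})'(t)v,v}+\bigl(2\re\sw{B(\cdot)v,F(\cdot)v}\bigr)*_t\rho_{\varepsilon}(t)\ge\sw{M^{\varepsilon}(t)v,v},
\]
the left side now a genuine continuous function of~$t$. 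Substituting $v=u(t)$ for each fixed~$t$, integrating over~$I$, and using the chain rule for $t\mapsto\sw{B^{\varepsilon}(t)u(t),u(t)}$ together with $\int_I\dtt\sw{B^{\varepsilon}u,u}\,dt=0$, one obtains
\[
2\int_I\im\sw{D_tu,B^{\varepsilon}u}\,dt+\int_I\bigl(2\re\sw{B(\cdot)u(t),F(\cdot)u(t)}\bigr)*_t\rho_{\varepsilon}(t)\,dt\ge\int_I\sw{M^{\varepsilon}(t)u(t),u(t)}\,dt .
\]

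Finally I would let ${\varepsilon}\to0$. Writing $B(s)u(t)=B(s)u(s)+B(s)(u(t)-u(s))$ with $\mn{B(s)(u(t)-u(s))}\le C|t-s|$ (from $u\in C^1$ and local boundedness of~$B$), and similarly for the $F^{\varepsilon}$ and $M^{\varepsilon}$ terms, reduces the convergence of each of the three integrals to Lebesgue's differentiation theorem for the $L^\infty$ functions $s\mapsto B(s)u(s)$, $s\mapsto\sw{B(s)u(s),F(s)u(s)}$, $s\mapsto\sw{M(s)u(s),u(s)}$; with the uniform bounds over~$K$ and dominated convergence the inequality passes to the limit, and reassembling $P=D_t+iF$ (using $\im\sw{iFu,Bu}=\re\sw{Bu,Fu}$) gives \eqref{pest}. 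The genuinely delicate step — where I would spend the most care — is this last passage to the limit, with $u$ varying in~$t$ while $B$, $F$, $M$ are only measurable in~$t$; everything else is the standard multiplier argument.
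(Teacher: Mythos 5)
Your proof is correct and follows essentially the same route as the paper's: mollify $B$ (and the scalar distributions in \eqref{multestcond}) in $t$, apply the regularized pointwise inequality to $v=u(t)$, integrate and kill the total $t$-derivative by compact support, then let ${\varepsilon}\to 0$. The only cosmetic difference is in the final limit, where the paper writes everything as a double integral in $(s,t)$ and invokes dominated convergence directly, while you decompose $u(t)=u(s)+\Cal O(|t-s|)$ and appeal to Lebesgue differentiation; both handle the same delicate point in equivalent ways.
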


\begin{proof}
Since $B(t)\in L_{loc}^\infty(\br, \Cal B)$, we may for $u$, $v \in \Cal
S(\br^{n}, \bc^N)$ define the regularization 
\begin{equation*}
\sw{B_{\varepsilon}(t)u,v} = {\varepsilon}^{-1} \int
\sw{B(s)u,v}{\phi}((t-s)/{\varepsilon})  \,ds
= \sw{Bu,v}({\phi}_{{\varepsilon},t})\qquad{\varepsilon}>0
\end{equation*} 
where ${\phi}_{{\varepsilon},t}(s) = {\varepsilon}^{-1}
{\phi}((t-s)/{\varepsilon})$ with $0 \le {\phi} \in C_0^{\infty}(\br)$
satisfying $\int {\phi}(t)\,dt = 1$. Then $t \mapsto
\sw{B_{\varepsilon}(t)u,v}$ is in $C^\infty(\br)$ with derivative
equal to $\dtt \sw{Bu,v}({\phi}_{{\varepsilon},t}) =
-\sw{Bu,v}({\phi}_{{\varepsilon},t}')$.  Let $I_0$ be an
open interval such that $I_0 \Subset I$.  Then for small enough
${\varepsilon}>0$ and  $t \in I_0$ we find from condition~\eqref{multestcond} 
that
\begin{equation}\label{newestcond}
  \dtt \sw{ B_{\varepsilon}(t)u,u} +
 2\re\sw{B u,F u}({\phi}_{{\varepsilon},t}) \ge 
 \sw{M u,u}({\phi}_{{\varepsilon},t})\qquad u \in
 \Cal S(\br^{n}, \bc^N)
\end{equation}
In fact, ${\phi}_{{\varepsilon},t} \ge 0$ and $\supp
{\phi}_{{\varepsilon},t} \in C_0^\infty(I)$ for small enough
${\varepsilon}$ when $t \in I_0$.

Now for $u(t)\in C_0^1(I_0,\Cal S(\br^{n}, \bc^N))$ and 
${\varepsilon}>0$ we define
\begin{equation}\label{eq:def1}
B_{\varepsilon,u}(t) = \sw{B_{\varepsilon}(t)u(t),u(t)} =
{\varepsilon}^{-1} \int
\sw{B(s)u(t),u(t)}{\phi}((t-s)/{\varepsilon}) \,ds
\end{equation}
For small enough ${\varepsilon}$ we obtain $B_{\varepsilon,u}(t) \in
C_0^1(I_0)$, with derivative 
 $$\dtt B_{\varepsilon,u} = \sw{(\dtt
B_{\varepsilon})u,u} + 2\re\sw{B_{\varepsilon} u,\ddt u}
 $$ 
since $B(t) \in L_{loc}^\infty(\br, \Cal B)$. By integrating with
respect to $t$, we obtain the vanishing average
\begin{equation}\label{aveq}
0 =  \int \dtt B_{\varepsilon,u}(t)\,dt  =  \int  \sw{(\dtt
  B_{\varepsilon})u,u}\,dt  + \int 2\re\sw{B_{\varepsilon} u,\ddt u}\,dt
\end{equation}
when $u\in C_0^1(I_0,\Cal S(\br^{n}, \bc^N))$. We
obtain from ~\eqref{newestcond} and~\eqref{aveq} that
\begin{equation*}
0\ge \iint \big(\sw{M(s) u(t),u(t)} + 2\re\sw{B(s)u(t), \ddt u(t) -F(s)u(t)}\big)
{\phi}((t-s)/{\varepsilon})\,dsdt
\end{equation*}
By letting ${\varepsilon} \to 0$, we find by dominated convergence that
\begin{equation*}
0 \ge \int \sw{M(t)u(t),u(t)} +2\re\sw{B(t) u(t),\ddt u(t)- F(t)u(t)} \,dt
\end{equation*}
since $u\in C_0^1(I_0,\Cal S(\br^{n}, \bc^N))$
and $M(t)$, $B(t)$, $F(t) \in L^\infty_{loc}(\br, \Cal B)$. Here $\ddt u -Fu =
iPu$ and $ 2\re\sw{Bu,iPu} = 
-2\im\sw{Pu,Bu}$, thus we obtain~\eqref{pest}
for $u\in C_0^1(I_0,\Cal S(\br^{n}, \bc^N))$. Since $I_0$ is an arbitrary open
subinterval with compact closure in ~$I$, this completes the proof of
the proposition.
\end{proof}

\section{The lower bounds}\label{lower}

In this section shall prove Proposition~\ref{mainprop}, which means
obtaining lower bounds on 
\begin{equation*}
 2\im\sw{P_0u,b_T^wu} = \sw{\partial_t b_T^wu,u} + 2\re\sw{F^wu,b_T^wu}
\end{equation*}
where $P_0 = D_t\id_N + iF^w(t,x,D_x)$ with
\begin{equation}\label{sysymbdef}
 F(t,w) = f(t,w)\id_N + F_0(t,w)
\end{equation}
Here $f \in L^\infty(\br, S(h^{-1}, hg^\sh))$ is real valued
satisfying condition $(\ol {\Psi})$ given by~(2.2), $F_0 \in
C^\infty(\br, S(1, hg^\sh))$ and $b_{T}^w = B_{T}^{Wick}$ is the
symmetric scalar operator given by Proposition~\ref{wickweyl} for this
~$f$. Since Proposition~\ref{apsdef} and Proposition~\ref{propwick} give lower bounds on the first
term:
\begin{equation*}
 \partial_t b_T^w = \partial_t B_T^{wick} \ge m^{wick}/2T \qquad \text{in
   $L^2$} \quad |t| \le T
\end{equation*}
it only remains to obtain comparable lower bounds on $\re b_T^wF^w$ by
Proposition~\ref{wprop}.

By Claim~\ref{subrem} we may also assume that 
\begin{equation}\label{subsymbdef}
 F_0 = \w{d_wf, R} = \sum_j \partial_{w_j}f R_{j}\qquad\text{modulo
   $S(h, hg^\sh)$} \qquad \forall\,t
\end{equation}
where $R_j \in S(h^{1/2}, hg^\sh)$ are $N\times N$ systems, $\forall\,
j$. Observe that since $d_wf \in S(MH^{1/2}, G)$, $hg^\sh \le 3G$ and $h \le
MH^{1/2}h^{1/2}$ by~\eqref{Mdef0} we find that $F_0 \in
S(MH^{1/2}h^{1/2}, G) \subseteq S(1,G)$ and thus $F \in S(M, G)$.

In the following, the results will hold
for almost all $|t| \le T$ and will
only depend on the seminorms of $f$ in $L^\infty(\br,
S(h^{-1},hg^\sh))$. We shall suppress the $t$ variable and assume the
coordinates chosen so that $g^\sh(w) = |w|^2$.
In order to prove Proposition~\ref{mainprop} we need to prove the
following result.

\begin{prop}\label{lowersign}
Assume that $F$ is given by~\eqref{sysymbdef}--\eqref{subsymbdef} and
$B = {\delta}_0 + {\varrho}_0$. Here ${\delta}_0$ is given by 
Definition~~\ref{d0deforig}, ${\varrho}_0$ is 
real valued and Lipschitz continuous
satisfying $|{\varrho}_0| \le m$, where $m \le \w{{\delta}_0}/2$ is given by
Definition~\ref{h0def}. Then we have
\begin{equation}\label{lowerbound}
\re\sw{ B^{Wick} F^wu,u} \ge \sw{C^w u,u} \qquad\forall\  u \in
\Cal S(\br^n, \bc^N)
\end{equation}
for some $N \times N$ system $C \in S(m, g^\sh)$.
\end{prop}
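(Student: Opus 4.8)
The plan is to split $F=f\id_N+F_0$ and treat the scalar principal part $f^w\id_N$ by the scalar multiplier estimate, then absorb the matrix-valued lower order part $F_0^w$ — which by Claim~\ref{subrem} we may take to be $\w{d_wf,R_0}=\sum_j\partial_{w_j}f\,R_{0,j}$ with $R_{0,j}\in L^\infty(\br,S(h^{1/2},hg^\sh))$ — as a perturbation. The $S(h,hg^\sh)$ remainder in~\eqref{subsymbdef} is harmless: since $B^{Wick}=b^w$ with $b=\delta_1+\varrho_1\in S(\w{\delta_0},g^\sh)$ and $\w{\delta_0}\le 3h^{-1/2}$, Lemma~\ref{calcrem} puts $b^w$ composed with $\op S(h,hg^\sh)$ in $\op S(h^{1/2},g^\sh)\subseteq\op S(m,g^\sh)$ by~\eqref{hhhest}. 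Throughout I would work with a $G$-adapted real partition of unity $\phi_1^2+\phi_2^2=1$, $\phi_j\in S(1,G)$ independent of $t$, with $\supp\phi_1\subseteq\set{\w{\delta_0}\le\kappa H^{-1/2}}$ and $\supp\phi_2\subseteq\set{\w{\delta_0}\ge\kappa H^{-1/2}/2}$ for a small fixed $\kappa\le\min(\kappa_1,\kappa_2)$, and reassemble at the end, the overlap commutators being of lower order as in the proof of Lemma~\ref{microrem}.

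On $\supp\phi_1$ the factorization $f=\alpha_0\delta_0$ of Proposition~\ref{ffactprop} holds with $\kappa_1MH^{1/2}\le\alpha_0\in S(MH^{1/2},G)$ and $\delta_0\in S(H^{-1/2},G)$, and by Proposition~\ref{wickweyl} $b=\delta_0$ modulo $S(H^{1/2},G)+S(m,g^\sh)$. For the scalar term I would write $\alpha_0=\beta^2$ with $\beta=\sqrt{\alpha_0}\in S((MH^{1/2})^{1/2},G)$ and extract the nonnegative square $\re(\phi_1^wb^wf^w\phi_1^w)=\bigl(\beta^w(\phi_1\delta_0)^w\bigr)^*\beta^w(\phi_1\delta_0)^w$ modulo $\op S(MH^{3/2}\w{\delta_0}^2,g^\sh)$, and then invoke Proposition~\ref{mestprop} to see $MH^{3/2}\w{\delta_0}^2\lesssim m$; this is exactly the mechanism of the scalar estimate in~\cite{de:NT},~\cite{de:cut} with the weight of Lerner~\cite{ln:cutloss}, which I would cite (or reprove along those lines) rather than redo from scratch. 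On $\supp\phi_2$ Proposition~\ref{wickweyl} gives $\sgn B=\sgn\delta_0=\sgn f$ and $|B|\ge\kappa H^{-1/2}/6$, so $Bf\ge 0$; since $\w{\delta_0}^2\gtrsim H^{-1}$ there, $f^w=f^{Wick}$ modulo $\op S(MH,G)$ with $b^w$ composed with $\op S(MH,G)$ in $\op S(MH^{1/2},G)\subseteq\op S(m,g^\sh)$, and by~\eqref{wickcomp2} together with $|B'\!\cdot\!f'|\lesssim MH^{1/2}\lesssim m$ one gets $\re(b^wf^w)=(Bf)^{Wick}$ modulo $\op S(m,g^\sh)$, which is $\ge 0$ modulo $\op S(m,g^\sh)$.

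For the lower order term, on $\supp\phi_2$ one has directly $|b\,F_0|\le\tfrac32H^{-1/2}\cdot nMH^{1/2}h^{1/2}=\tfrac{3n}{2}Mh^{1/2}\lesssim m$, again by Proposition~\ref{mestprop} since $\w{\delta_0}^2\gtrsim H^{-1}$, and a compatible-class application of Lemma~\ref{calcrem} puts $\re(\phi_2^wb^wF_0^w\phi_2^w)$ in $\op S(m,g^\sh)$. On $\supp\phi_1$, where $|bF_0|$ need not be $O(m)$, I would instead absorb $F_0$ into the square produced above: writing $\partial_{w_j}f=\beta^2\partial_{w_j}\delta_0+\delta_0\partial_{w_j}\alpha_0$ and using $b=\delta_0$ modulo $S(H^{1/2},G)+S(m,g^\sh)$, a Cauchy-Schwarz against $\beta^w(\phi_1\delta_0)^wu$ together with $\|R_{0,j}^w\|_{\Cal L(L^2)}\lesssim h^{1/2}$, $\|u\|^2\lesssim h^{-1/2}\sw{m^{Wick}u,u}$ and the key bound $MH^{1/2}h^{1/2}\lesssim 1$ on $\supp\phi_1$ — which follows since $f=\alpha_0\delta_0$ forces $|f|\le C\kappa M$ there, hence $M\lesssim h^{-1/2}H^{-1/2}$ from $M-|f|=|f'|H^{-1/2}+|f''|H^{-1}+h^{1/2}H^{-3/2}$ — yields $|\re\sw{\phi_1^wB^{Wick}F_0^w\phi_1^wu,u}|\le\varepsilon\,\re\sw{\phi_1^wB^{Wick}f^w\phi_1^wu,u}+C_\varepsilon\sw{m^{Wick}u,u}$. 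Reassembling the four pieces and using Proposition~\ref{wprop} to replace $\sw{m^{Wick}u,u}$ by a term $\sw{C^wu,u}$ with $C\in S(m,g^\sh)$ gives~\eqref{lowerbound}. The main obstacle is the scalar estimate on $\set{\w{\delta_0}\lesssim H^{-1/2}}$ — keeping every term of the Weyl expansion of $\re(b^wf^w)$ inside $S(m,g^\sh)$ while preserving the nonnegative part — which is the technical heart of~\cite{de:cut},~\cite{ln:cutloss} and hinges on Proposition~\ref{mestprop}; the genuinely new point is that the matrix term $F_0$ contributes only $\op S(m,g^\sh)$, and this is precisely what forces the normalization $F_0=\w{d_wf,R_0}$ with its half-derivative gain.
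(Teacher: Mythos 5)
Your architecture matches the paper's: the same dichotomy between the regions $\w{{\delta}_0}\lesssim H^{-1/2}$ and $\w{{\delta}_0}\gtrsim H^{-1/2}$ (the paper realizes it through a partition into $G$\nobreakdash-balls on each of which one of \eqref{d0case}, \eqref{d1case} holds, rather than a two-piece cutoff, but that is cosmetic), the same use of $Bf\ge 0$ and Wick positivity on the second region, the same factorization $f={\alpha}_0{\delta}_0$ with ${\alpha}_0\ge{\kappa}_1MH^{1/2}$ on the first, and the same absorption of $F_0$ into the square by Cauchy--Schwarz, driven by the bound $M\lesssim h^{-1/2}H^{-1/2}$ where $|{\delta}_0|\ll H^{-1/2}$ (the paper phrases this as $R_j=\re{\phi}_jF_0/{\alpha}_0^{1/2}\in S(M^{1/2}H^{3/4},G)$ followed by completion of the square, which is the same computation). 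Your identification of the normalization \eqref{subsymbdef} as the reason $F_0$ is harmless is also exactly right.

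The step that is wrong as written is the displayed identity $\re({\phi}_1^wb^wf^w{\phi}_1^w)=({\beta}^w({\phi}_1{\delta}_0)^w)^*{\beta}^w({\phi}_1{\delta}_0)^w$ modulo $\op S(MH^{3/2}\w{{\delta}_0}^2,g^\sh)$. Here $b={\delta}_1+{\varrho}_1$ with ${\varrho}_1\in S(m,g^\sh)$ coming from ${\varrho}_0^{Wick}$, and the contribution $\re({\varrho}_1^wf^w)$ has principal symbol ${\varrho}_1f$ of size $m|f|\sim mMH^{1/2}\w{{\delta}_0}\le{\kappa}mM$, which exceeds both $m$ and $MH^{3/2}\w{{\delta}_0}^2$ by a factor that can be as large as $h^{-1}$; so the pseudo-sign correction cannot be relegated to the error class. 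It has to be kept inside the positivity, which the paper does by writing $B={\delta}_0(1+{\chi}_0{\varrho}_0/{\delta}_0)$ with $1+{\chi}_0{\varrho}_0/{\delta}_0\ge 1/4$ (possible since $|{\varrho}_0|\le m\le\w{{\delta}_0}/2$), turning your identity into the one-sided operator inequality $\re f_j^wB^{Wick}\ge\tfrac14 a_j^wa_j^w+\re a_j^wS_j^w$ modulo $\op S(m,g^\sh)$ with $S_j\in S(M^{1/2}H^{3/4}\w{{\delta}_0},g^\sh)$, after which the square completion closes the argument. You flag this region as the technical heart to be imported from \cite{de:cut}, \cite{ln:cutloss}, so the idea is not absent, but the formula your subsequent ${\varepsilon}$-absorption of $F_0$ leans on must be this corrected inequality, not the identity you wrote; as stated the proof does not close. (A minor further slip: a partition adapted to $\set{\w{{\delta}_0}\le{\kappa}H^{-1/2}}$ cannot be independent of $t$, since ${\delta}_0$ and $H$ are not; this is harmless because the whole estimate is pointwise in $t$.)
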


\begin{proof}[Proof of Proposition~\ref{mainprop}]
Let $B_T = {\delta}_0 + {\varrho}_T$, where ${\delta}_0 +
{\varrho}_T$ is the pseudo-sign for~ $f$ given by
Proposition~\ref{apsdef} for $0 < T \le 1$, so that $|{\varrho}_{T}|
\le m$ and
\begin{equation}\label{dtbt} 
{\partial_t } ({\delta}_0 + {\varrho}_{T}) \ge 
m/2T\qquad\text{in $\mathcal D'\big(]{-T},T[\big)$}
 \end{equation}
If we put $B_T \equiv 0$ when $|t| >T$, then
$B_T^{Wick} = b_T^w$ where $b_T(t,w) \in L^\infty(\br,
S(H^{-1/2}, g^\sh)\linebreak[3] \bigcap S^+(1,
g^\sh)) 
$ uniformly by Proposition~\ref{wickweyl}.
We find by Proposition~\ref{propwick}
and ~\eqref{dtbt} that
\begin{equation}\label{dbest}
 \sw{({\partial_t}B_T)^{Wick} u,u} \ge
 \sw{m^{Wick}u,u}/2T \qquad\text{in  $\mathcal D'\big(]{-T},T[\big)$}
\end{equation}
when $u \in  \Cal S({\mathbf R}^n)$.  We obtain from
Proposition~\ref{wprop} that there exist positive constants $c_1$ and
$c_2$ so that
\begin{equation}\label{mlow}
\sw{m^{Wick}u,u} \ge c_2\mn{u}_{{1/2}}^2 \ge
c_1h^{1/2} (\mn{b_T^wu}^2 + \mn{u}^2)\qquad 
u \in  \Cal S({\mathbf R}^n)
\end{equation}
Here $\mn{u}_{{1/2}}$ is the norm of the Hilbert space
$H(m^{1/2}, g^\sh)$ given by Definition~\ref{Hdef}. By
Proposition~\ref{lowersign}, we find for 
almost all $t \in [-T,T]$ that
\begin{equation}
\re \sw{(B_T^{Wick}F^w)\restr{t}u,u} = 
\sw{C^w(t)u,u} \quad u \in  \Cal S({\mathbf R}^n, \bc^N)
\end{equation}
here the $N \times N$ system $C(t) \in S(m, g^\sh)$ uniformly. We obtain
from~\eqref{nest0}, \eqref{mlow} and duality that there exists a
positive constant $c_3$ such that
\begin{equation}\label{cest}
|\sw{C^w(t)u,u}| \le \mn{u}_{{1/2}} \mn{C^w(t)
    u}_{-1/2} \le c_3
\mn{u}_{{1/2}}^2 \le c_3\sw{m^{Wick}u,u}/c_2
\end{equation}
for $u \in  \Cal S({\mathbf R}^n, \bc^N)$ and $|t| \le T$. 
We find from ~\eqref{dbest}--\eqref{cest} that
\begin{equation*}
\sw{\partial_{t}b^w_Tu,u} +
2\re\sw{F^w u,b^w_{T}u} \ge (1/2T -
2c_3/c_2)\sw{m^{Wick}u,u} \qquad\text{ in $\Cal D'\big(]{-T},T[\big)$}
\end{equation*}
for $u \in \Cal S(\br^{n}, \bc^N)$. 
By using Proposition~\ref{psest} with $P = D_t\id_N + iF^w(t,x,D_x)$, $B
= b_T^w$ and $M = m^{Wick}/4T$ we obtain that
\begin{equation*}
c_1h^{1/2}\int \mn{b_t^wu}^2 + \mn u^2 \,dt \le \int \sw{m^{Wick}u,u} \,dt \le
 {8T} \int \im\sw{P_0u,b^w_Tu} \,dt
\end{equation*}
if $u \in \Cal S(\br\times \br^{n}, \bc^N)$ has support where $|t| < T \le
c_2/8c_3$. This finishes the proof of Proposition~\ref{mainprop}.
\end{proof}

\begin{proof}[Proof of Proposition~\ref{lowersign}]
First we note that since $B^{Wick} = b^w \in \op
S(\w{{\delta}_0}, g^\sh)$ by Proposition~\ref{wickweyl} and
$h^{1/2}\w{{\delta}_0}^2 \le 6m$ by~\eqref{hhhest}, 
we find $B^{Wick}R^w \in \op S(m,g^\sh)$ when $R \in
S(h^{1/2},g^\sh)$. Since $\im F = \frac{1}{2i}(F- F^*) \in S(1,hg^\sh)$ we find
$$
2\re (B^{Wick} i(\im F)^w) = i[b^w, (\im F)^w ] \in \op
S(h^{1/2},g^\sh)
$$
thus it suffices to consider symmetric $F$ satisfying~\eqref{subsymbdef}.

We shall localize in $T^*\br^n$ with respect to the metric $G = Hg^\sh$,
and estimate the localized operators.
We shall use the neighborhoods 
\begin{equation}\label{omegajdef} 
 {\omega}_{w_0}({\varepsilon}) = \set{w:\ |w-w_0| <
{\varepsilon}H^{-1/2}(w_0)}\qquad  \text{for } w_0 \in T^*\br^n
 \end{equation}
We may in the following assume that 
${\varepsilon}$ is small enough so that 
$w \mapsto H(w)$ and $w \mapsto M(w)$ only vary with a fixed factor
in~ ${\omega}_{w_0}({\varepsilon})$.  Then
by the uniform Lipschitz continuity of
$w \mapsto {\delta}_0(w)$ we can find ${\kappa}_0 >0$
with the following property: for $0 < {\kappa}\le {\kappa}_0$ there
exist positive constants $c_{\kappa}$ and ${\varepsilon}_{\kappa}$ so
that for any $w_0 \in T^*\br^n$ we have
\begin{alignat}{2}
&|{\delta}_0(w)| \le {\kappa}H^{-1/2}(w)&\qquad &w \in
{\omega}_{w_0}({\varepsilon}_{\kappa})  \qquad\text{or}\label{d0case}\\
&|{\delta}_0(w)| \ge c_{\kappa} H^{-1/2}(w)&\qquad &w \in
{\omega}_{w_0}({\varepsilon}_{\kappa}) \label{d1case}
\end{alignat}
In fact, we have by the Lipschitz continuity that $|{\delta}_0(w) -
{\delta}_0(w_0)| \le {\varepsilon_{\kappa}}H^{-1/2}(w_0)$ when
$w\in {\omega}_{w_0}({\varepsilon}_{\kappa})$. Thus, if
${\varepsilon}_{\kappa} \ll {\kappa}$ we obtain that ~\eqref{d0case}
holds when $|{\delta}_0(w_0)| \ll {\kappa}H^{-1/2}(w_0)$ and
~\eqref{d1case} holds when $|{\delta}_0(w_0)| \ge
c{\kappa}H^{-1/2}(w_0)$.

Let ${\kappa}_1$ be given by Proposition~\ref{ffactprop},
${\kappa}_2$ by Proposition~\ref{wickweyl}, and
let ${\varepsilon}_{\kappa}$ and $c_{\kappa}$ be given by
\eqref{d0case}--\eqref{d1case} for ${\kappa} = \min
({\kappa}_0, {\kappa}_1, {\kappa}_2)/2$. 
Using Proposition~\ref{wickweyl} with ${\lambda} = c_{\kappa}$
we obtain that $\sgn(B) = \sgn ({\delta}_0)$ and 
\begin{equation} \label{kappa3ref}
|B |\ge c_{\kappa}H^{-1/2}/3\qquad\text{in ~$
{\omega}_{w_0}({\varepsilon}_{\kappa})$} 
\end{equation}
if $H^{1/2} \le c_{\kappa}/3$ and ~\eqref{d1case} holds in ~$
{\omega}_{w_0}({\varepsilon}_{\kappa})$. 

Choose real symbols
$\set{{\psi}_j(w)}_j$ and $\set{{\Psi}_j(w)}_j
\in S(1,G)$ with values in ~~$\ell^2$, such that 
$\sum_{k} {\psi}_j^2 \equiv 1$, ${\psi}_j{\Psi}_j = {\psi}_j$, 
${\Psi}_j = {\phi}_j^2 \ge 0$ for some
$\set{{\phi}_j(w)}_j \in S(1,G)$ with values in ~~$\ell^2$ so that
$$\supp {\phi}_j \subseteq
{\omega}_j = {\omega}_{w_j}({\varepsilon}_{\kappa})
$$ 
Recall that $B^{Wick} = b^w$ where $b= {\delta}_1 + {\varrho}_1$ is
given by Proposition~\ref{wickweyl}. In particular, ${\delta}_1 \in
S(H^{-1/2}, G)$ when $H^{1/2} \le {\kappa}_2/2$ and~\eqref{d0case}
holds, since then $\w{{\delta}_0} \le {\kappa}_2 H^{-1/2}$.

\begin{lem}\label{bfconglem}
We find that $A_j = {\Psi}_jb \re F \in S(MH^{-1/2},g^\sharp)\bigcap
S^+(M,g^\sharp)$ uniformly in $j$, and
 \begin{equation}\label{bfcong}
\re (b^wF^w) =  \sum_{j}^{} {\psi}_j^w A_j^w{\psi}_j^w  
\qquad \text{modulo $\op S(m,g^\sharp)$}
\end{equation}
We have $A_j^w = \re b^wF_j^w$ modulo $\op
S(m,g^\sharp)$ uniformly in ~$j$, where $F_j = {\Psi}_jF$.
\end{lem}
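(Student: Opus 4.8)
The plan is to analyze the product $b^wF^w$ through the Weyl calculus, exploiting the partition of unity $\sum_j\psi_j^2=1$ and the localization \eqref{omegajdef}. First I would record the basic symbol memberships: from Proposition~\ref{wickweyl}, $b\in S(\w{\delta_0},g^\sh)\cap S^+(1,g^\sh)$; from \eqref{subsymbdef} together with the remarks following it, $F=\re F\in S(M,G)$ (the antisymmetric part having been removed at the start of the proof of Proposition~\ref{lowersign}). Since $\w{\delta_0}\le H^{-1/2}$ and $M$ is a weight for $G=Hg^\sh$, the product $bF\in S(MH^{-1/2},g^\sh)$. For the $S^+$ statement one differentiates the product: a derivative falling on $b$ costs a factor $|b|^g_1\le C$ (by $b\in S^+(1,g^\sh)$) times $M$, a derivative falling on $F$ costs $|F|^G_1\le CMH^{1/2}\cdot H^{-1/2}=CM$ in $g^\sh$-units; and $\Psi_j\in S(1,G)\subseteq S(1,g^\sh)$ contributes only bounded factors. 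Hence $A_j={\Psi}_jb\re F\in S(MH^{-1/2},g^\sh)\cap S^+(M,g^\sh)$ uniformly in $j$.

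Next I would pass from $b^wF^w$ to $\sum_j\psi_j^w A_j^w\psi_j^w$. Write $1=\sum_j\psi_j^w\psi_j^w$ modulo a negligible term — more precisely, by Lemma~\ref{calcrem} applied in the metric $G$, $\sum_j(\psi_j^2)^w=(\sum_j\psi_j^2)^w=\id$ exactly, and $\psi_j^w\psi_j^w=(\psi_j^2)^w$ modulo $\op S(H,G)$ with $\ell^2$-valued symbols. Then
$$
\re(b^wF^w)=\sum_j\re\bigl(\psi_j^w b^w F^w\psi_j^w\bigr)+\text{(error)},
$$
and one commutes $\psi_j^w$ past $b^wF^w$: each commutator $[\psi_j^w,\cdot]$ gains a factor $H^{1/2}$ (in $g^\sh$-units, since $\nabla\psi_j$ is measured in $G$), and the accumulated error is $\op S(MH^{-1/2}\cdot H,g^\sh)=\op S(MH^{1/2},g^\sh)$ with $\ell^2$ structure, hence bounded on $L^2$ by Remark~\ref{vvcalc}. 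By Proposition~\ref{mestprop}, $MH^{3/2}\w{\delta_0}^2\le C_0m$, and since $\w{\delta_0}\ge 1$ we get $MH^{1/2}\le MH^{3/2}\le MH^{3/2}\w{\delta_0}^2\le C_0m$, so the error is in $\op S(m,g^\sh)$. Inside the localized piece, $\psi_j^w b^w F^w\psi_j^w=\psi_j^w(\Psi_j b F)^w\psi_j^w$ modulo $\op S(m,g^\sh)$, because on $\supp\psi_j$ we have $\Psi_j\equiv 1$ and the calculus (Lemma~\ref{calcrem} in the metric $g=(g^\sh+G)/2$) lets us insert $\Psi_j$ at the cost of terms supported away from $\supp\psi_j$, which are $\op S(MH^{-1/2}H^N,G)$ for every $N$ by the Beals–Fefferman-type argument, hence negligible. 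Replacing $bF$ by $\re(bF)=b\re F$ uses that $b$ is real and the error in passing from $b^wF^w$ to its symmetric part is again an $H^{1/2}$-commutator gain, landing in $\op S(m,g^\sh)$; this gives \eqref{bfcong} with $A_j={\Psi}_jb\re F$.

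Finally, for the last sentence: set $F_j={\Psi}_jF$. Then $A_j^w=({\Psi}_j b\re F)^w$, while $\re b^w F_j^w=\re b^w({\Psi}_jF)^w=({\Psi}_j\,b\re F)^w$ modulo $\op S(m,g^\sh)$ by the same commutator/calculus estimate: the discrepancy between $b^w({\Psi}_jF)^w$ and $(b{\Psi}_jF)^w$ is in $\op S(\w{\delta_0}\cdot MH^{1/2}\cdot H^{1/2},G)$, and as above $\w{\delta_0}MH\le MH^{3/2}\w{\delta_0}^2\le C_0m$, so it is $\op S(m,g^\sh)$. Taking real parts removes the antisymmetric remainder at the same cost. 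I expect the main obstacle to be bookkeeping the metric in which each gain factor $H^{1/2}$ is collected — the symbols $b,\delta_1,\varrho_1$ naturally live in $S(\cdot,g^\sh)$ while $F$ and the cutoffs live in $S(\cdot,G)$ — so one must consistently use the intermediate metric $g=(g^\sh+G)/2$ of Lemma~\ref{calcrem} (with $h^2=\sup g^\sh/G^\sigma=H$ here), and then convert every error bound into a multiple of $m$ via Proposition~\ref{mestprop}; the $\ell^2$-valued version of the calculus (Remark~\ref{vvcalc}) is what makes the sum over $j$ harmless.
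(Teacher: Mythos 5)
Your first paragraph (the symbol memberships of $A_j$) is fine and agrees with the paper. But the error analysis in your second and third paragraphs rests on a reversed inequality, and this is a genuine gap. You claim that first-order calculus errors of size $MH^{1/2}$ lie in $S(m,g^\sh)$ because ``$MH^{1/2}\le MH^{3/2}\le MH^{3/2}\w{\delta_0}^2\le C_0m$''. Since $H\le 1$ we have $H^{1/2}\ge H^{3/2}$, so $MH^{1/2}\ge MH^{3/2}$ and the first inequality goes the wrong way. Proposition~\ref{mestprop} only gives $MH^{3/2}\w{{\delta}_0}^2\le C_0m$, and since always $\w{{\delta}_0}\le H^{-1/2}$ this yields $MH^{1/2}\le Cm$ only where $\w{{\delta}_0}\cong H^{-1/2}$, i.e.\ in case~\eqref{d1case}. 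Near the sign change, where $\w{{\delta}_0}\cong 1$ and $H$ can be as small as $h$, one has $MH^{1/2}\ge h^{1/2}H^{-1}$ while $m\le H^{1/2}\w{{\delta}_0}^2/2\cong H^{1/2}$, so $MH^{1/2}/m$ is unbounded. The same reversal occurs in your last paragraph: ``$\w{\delta_0}MH\le MH^{3/2}\w{\delta_0}^2$'' is equivalent to $\w{{\delta}_0}\ge H^{-1/2}$, which never holds except at the extreme of case~\eqref{d1case}.

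What is missing is precisely the paper's treatment of the region~\eqref{d0case}. There one must use (i) that $b={\delta}_1+{\varrho}_1$ with ${\delta}_1\in S(H^{-1/2},G)$ --- a symbol class in the \emph{finer} metric $G=Hg^\sh$, available only where $\w{{\delta}_0}\le{\kappa}_2H^{-1/2}$ by Propositions~\ref{ffactprop} and~\ref{wickweyl} --- together with ${\varrho}_1\in S(m,g^\sh)$; and (ii) that taking the symmetric part of $b^wF^w$ kills the first-order (Poisson bracket) term of the Weyl expansion, since $b$ is real and $F$ is symmetric modulo $S(MH,G)$. This replaces the unacceptable $O(MH^{1/2})$ error by a second-order error in $S(MH^{3/2},G)+S(MHm,g^\sh)\subseteq S(m,g^\sh)$, using $M\le CH^{-1}$ and $MH^{3/2}\le MH^{3/2}\w{{\delta}_0}^2\le Cm$. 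Your remark that ``taking real parts removes the antisymmetric remainder at the same cost'' shows you noticed the symmetrization but not that it is exactly what buys the extra factor $H^{1/2}$, and that to cash it in you need ${\delta}_1$ in the $G$-class rather than merely $b\in S^+(1,g^\sh)$. Without these two ingredients the argument fails in the region where $f$ changes sign, which is the whole point of the lemma.
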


\begin{proof}
Since $b \in S(H^{-1/2}, g^\sh) \bigcap S^+(1, g^\sh)$, ${\psi}_j
\in S(1, G)$ and $F_j \in S(M, G)$ we obtain that 
$A_j  \in S(MH^{-1/2},g^\sharp)\bigcap
S^+(M,g^\sharp)$ uniformly in $j$.
Proposition~\ref{mestprop}
gives that
\begin{equation} \label{fprimest}
MH^{3/2}\w{{\delta}_0}^2 \le Cm
\end{equation} 
thus we may ignore terms in $\op
S(MH^{3/2}\w{{\delta}_0}^2,g^\sharp)$. Observe that since $b \in 
S(H^{-1/2},g^\sh)$, $\set{{\psi}_k}_k \in S(1,G)$ has values in
$\ell^2$ and $A_k \in S(MH^{-1/2},g^\sh)$ uniformly, 
Lemma~\ref{calcrem} and Remark~\ref{vvcalc} gives that the symbols of
$b^wF^w$, $b^wF_j^w$ and $\sum_k {\psi}_k^wA_k^w{\psi}_k^w$ have expansions in
$S(MH^{j/2}, g^\sh)$. Also observe that in the domains ~${\omega}_j$
where $H^{1/2} \ge c > 0$, we find from Remark~\ref{vvcalc} that the
symbols of $\sum_{k} {\psi}_k^w A_k^w{\psi}_k^w$, $b^w F_j^w$ and
$b^wF^w$ are in $ S(MH^{3/2}, g^\sharp)$ giving the result in this
case. Thus, in the following, we shall assume that $H^{1/2}\ll 1$,
and we shall consider the neighborhoods where~\eqref{d0case}
or~\eqref{d1case} holds.

If~\eqref{d1case} holds then we find that $\w{{\delta}_0} \cong
H^{-1/2}$ so $S(MH^{1/2},g^\sharp) \subseteq S(m, g^\sharp)$
in ${\omega}_j$ by ~\eqref{fprimest}.  Since $b\in S^+(1,g^\sharp)$
and $A_j \in S^+(M,g^\sh)$ we find from Lemma~\ref{calcrem} and
Remark~\ref{vvcalc} that the symbols of both $\re b^w F^w $ and
$\sum_{k}^{} {\psi}_k^w A_k^w {\psi}_k^w$ are equal to $\sum_k
{\psi}_k^2 A_k = \re bF$ modulo $S(MH^{1/2}, g^\sharp)$ in
~${\omega}_j$. We also find that the symbol of $\re b^w  F_j^w$ is equal to~
$A_j$ modulo $S(MH^{1/2},g^\sharp)$, which proves the result in this
case.

Next, we consider the case when ~\eqref{d0case} holds with ${\kappa} =
\min ({\kappa}_0, {\kappa}_1, {\kappa}_2)/2$ and $H^{1/2} \le
{\kappa}_2/2$ in~${\omega}_j$. Then $\w{{\delta}_0} \le {\kappa}_2
H^{-1/2}$ so $b = {\delta}_1 + {\varrho}_1 \in S(H^{-1/2},G)+
S(m, g^\sharp)$ in~${\omega}_j$ by Proposition ~\ref{wickweyl}.
Now $b$ is real and $F$ is symmetric modulo $S(MH,G)$. Thus,
by taking the symmetric part of $b^w F^w = {\delta}_1^w F^w +
{\varrho}_1^w F^w$ we obtain from Lemma~\ref{calcrem} that the symbol
of $\re (b^w F^w -(bF)^w)$ is in $S(MH^{3/2}, G) + S(MHm,
g^\sh) \subseteq S(m,g^\sh)$ in ~${\omega}_j$ since $M \le
CH^{-1}$. Similarly, we find that $A_j^w = \re b^w F_j^w$ modulo
$S(m,g^\sh)$.  Since $A_j \in S(MH^{-1/2},G)+
S(Mm,g^\sharp)$ uniformly, we find that the symbol of $\sum_{k}
{\psi}_k^w A_k^w {\psi}_k^w$ is equal to $\re bF$ modulo $ S(m,g^\sh)$
in ~${\omega}_j$ by Remark~\ref{vvcalc}, which proves ~\eqref{bfcong} and
Lemma~\ref{bfconglem}.
\end{proof}

Next, we shall show that there
exists\/ $N \times N$ system $C_{j} \in S(m, g^\sharp)$ uniformly, such that
\begin{equation}\label{lowerest}
\sw{A_j^w u,u} \ge \sw{C^w_{j} u,u}\qquad
u \in  \Cal S({\mathbf R}^n, \bc^N)
\end{equation} 
Then we obtain from~\eqref{bfcong} and~\eqref{lowerest} that 
$$ 
\re\sw{b^w F^w u,u} \ge \sum_{j} \sw{{\psi}_j^wC^w_{j}{\psi}_j^w u,u}
+\sw{R^wu,u}\qquad\text{$u \in \Cal S({\mathbf R}^n, \bc^N)$}
$$ 
where $\sum_{j}{\psi}_j^wC^w_{j}{\psi}_j^w$ and $R^w \in \op
S(m,g^\sharp)$, which will prove Proposition~\ref{lowersign}.

Thus, it remains to show that there exists $C_{j} \in S(m,
g^\sharp)$ satisfying~\eqref{lowerest}.
As before we are going to consider the
cases when $H^{1/2} \cong 1$ or $H^{1/2} \ll 1$, and when
~\eqref{d0case} or ~\eqref{d1case} holds in ${\omega}_j =
{\omega}_{w_j}({\varepsilon}_{\kappa})$ for ${\kappa} = \min
({\kappa}_0, {\kappa}_1, {\kappa}_2)/2$. When $H^{1/2} \ge
c > 0$ we find that
$A_j \in S(MH^{3/2},g^\sharp)
\subseteq S(m,g^\sharp)$ uniformly by
~\eqref{fprimest} which gives the lemma with $C_j = A_j$ in this
case. Thus, we may assume that 
\begin{equation}\label{kappa4def}
H^{1/2} \le {\kappa}_4 = \min({\kappa}_0, {\kappa}_1, {\kappa}_2,
{\kappa}_3)/2 \qquad \text{in ${\omega}_j$} 
\end{equation}
with ${\kappa}_3  = 2c_{\kappa}/3$
so that \eqref{kappa3ref} follows from ~\eqref{d1case}.

First, we consider the case when $H^{1/2} \le {\kappa}_4$ and
~\eqref{d1case} holds in ${\omega}_j$.  
Since $|{\delta}_0(w)| \ge c_{\kappa} H^{-1/2}(w)$, we find
$\w{{\delta}_0} \cong H^{-1/2}$ in ~${\omega}_j$. As before we may ignore
terms in $S(MH^{1/2},g^\sharp) \subseteq
S(m,g^\sharp)$ in ${\omega}_j$ by
~\eqref{fprimest}. Let $f_j = {\Psi}_jf$, since
$\sgn(f) = \sgn({\delta}_0) = \sgn(B)$ in ${\omega}_j$ by
Proposition~\ref{wickweyl} we find that $f_jB \ge 0$.
Since $f_j \in S(M, G)$, we find $f_j^w = f_j^{Wick}$ modulo
$\op S(MH,G)$ by Proposition~\ref{propwick}, thus we may replace
$f_j^w$ with $f_j^{Wick}$. 
Since $F_{0,j} \in S(MH^{1/2}h^{1/2}, G)$ by ~\eqref{subsymbdef} we find 
that $B^{Wick}F^w_{0,j} \in \op S(MH^{1/2}, g^\sh)$. Since $|B| \le
CH^{-1/2}$ and $B \in S^+(1,g^\sh)$, we find from
~\eqref{wickcomp2}  in Remark ~\ref{wickcomp} that
\begin{equation*}
A_j^w = \re B^{Wick} f_j^{Wick} = (Bf_j)^{Wick} \ge 0
\qquad\text{in $L^2$ modulo $\op S(MH^{1/2},g^\sh)$} 
\end{equation*}
which gives~\eqref{lowerest} in this case.

Finally, we consider the case when ~\eqref{d0case} holds with ${\kappa} =
\min ({\kappa}_0, {\kappa}_1, {\kappa}_2)/2$ and $H^{1/2} \le
{\kappa}_4\le {\kappa}$ in ${\omega}_j$. Then $\w{{\delta}_0} \le
2{\kappa}H^{-1/2}$ so we obtain from Proposition
~\ref{ffactprop} that 
${\delta}_0 \in S(H^{-1/2}, G) \bigcap S(\w{{\delta}_0}, g^\sh)$  in~${\omega}_j$. We have that $b^w =
({\delta}_0 + {\varrho}_0)^{Wick} = B^{Wick}$, where 
\begin{equation} \label{hhhesta}
|{\varrho}_0|
\le m \le H^{1/2}\w{{\delta}_0}^2/2 \le \w{{\delta}_0}/2
\end{equation} 
by Propositions~\ref{h0slow} and ~\ref{apsdef}. Also, we find form Lemma~\ref{bfconglem}
that  $A_j^w = \re B^{Wick} F_j^w$
modulo $\op S(m,g^\sharp)$.

Take ${\chi}(t) \in C^{\infty}(\br)$ such that
$0 \le {\chi}(t) \le 1$, $|t| \ge 2$ in
$\supp {\chi}(t)$ and
${\chi}(t) = 1$ for $|t| \ge 3$.
Let ${\chi}_0 = {\chi}({\delta}_0)$, then 
$2 \le |{\delta}_0|$ and
$\w{{\delta}_0}/|{\delta}_0| \le 3/2$
 in $\supp {\chi}_0$, thus 
\begin{equation}\label{b0ref} 
1 + {\chi}_0{\varrho}_0/{\delta}_0 \ge 1 -
{\chi}_0\w{{\delta}_0}/2|{\delta}_0| \ge 1/4
\end{equation}
Since $|{\delta}_0| \le 3$ in $\supp (1-{\chi}_0)$
we find  by~ \eqref{hhhesta}
that 
$$
B = {\delta}_0 + {\chi}_0{\varrho}_0 = {\delta}_0( 1 + {\chi}_0{\varrho}_0/{\delta}_0)
$$ 
modulo terms that are $\Cal O(H^{1/2})$.
Since $|{\delta}_0'| \le 1$ and
$$|{\chi}_0{\varrho}_0/{\delta}_0| \le {\chi}_0H^{1/2}\w{{\delta}_0}^2/2|{\delta}_0| 
\le 3H^{1/2} \w{{\delta}_0}/4$$ we find from ~\eqref{wickcomp1} in
Remark~\ref{wickcomp} that
\begin{equation}\label{bwref}
 B^{Wick} =  {\delta}_0^{Wick}B_0^{Wick} \qquad\text{modulo $\op
   S(H^{1/2}\w{{\delta}_0}, g^\sh)$}
\end{equation} 
where $B_0 = 1 + {\chi}_0{\varrho}_0/{\delta}_0 = \Cal O(1)$.
Proposition~\ref{wickweyl} gives
$({\chi}_0{\varrho}_0/{\delta}_0)^{Wick} \in \op
S(H^{1/2}\w{{\delta}_0}, g^\sh)$ and
${\delta}_0^{Wick} = {\delta}_1^w $ where  ${\delta}_1 \in
S(H^{-1/2}, g^\sh)$ and ${\delta}_1 = {\delta}_0$
modulo $\op S(H^{1/2}, G)$ in~${\omega}_j$.
Thus Lemma~\ref{calcrem} and ~\eqref{bwref} gives
\begin{equation}\label{nycalcref0}
 B^{Wick} = {\delta}_1^wB_0^{Wick} = {\delta}_0^wB_0^{Wick} + c^w  \qquad\text{modulo $\op
   S(H^{1/2}\w{{\delta}_0}, g^\sh)$}
\end{equation}
where $c \in S(H^{-1/2}, g^\sh)$ such that $\supp c \bigcap
{\omega}_j = \emptyset$. 

We find from Proposition~\ref{ffactprop} that $ f =
{\alpha}_0{\delta}_0 $, where ${\kappa}_1MH^{1/2}\le {\alpha}_0 \in
S(MH^{1/2}, G)$, so Leibniz' rule gives ${\alpha}_0^{1/2} \in S(M^{1/2}H^{1/4},
G)$. Let  $f_j = {\Psi}_jf$ and
\begin{equation}\label{ajdef}
 a_j =
{\alpha}_0^{1/2}{\phi}_j{\delta}_0 \in S(M^{1/2}H^{-1/4},G) \bigcap
S(M^{1/2} H^{1/4}\w{{\delta}_0}, g^\sh)
\end{equation}
Since ${\Psi}_j = {\phi}_j^2$ we find $a_j^2 = f_j{\delta}_0$
and the calculus gives
\begin{equation}\label{nycalcref1}
 a_j^w({\alpha}_0^{1/2}{\phi}_j)^w = f_j^w\qquad\text{modulo $\op S(MH,G)$}
\end{equation}
Since $\supp f_j \bigcap \supp c = \emptyset$ we find that $f_j^wc^w \in \op S(MH^{3/2},g^\sh)$. 
We also have
 \begin{equation}\label{nycalcref2}
\re  f_j^w{\delta}_0^{w} = a_j^w a_j^w\qquad\text{modulo $\op S(MH^{3/2},G)$}
 \end{equation}
and $\im  f_j^w{\delta}_0^{w} \in \op S(MH^{1/2},G)$. We obtain from~
\eqref{nycalcref0} and~\eqref{nycalcref1} that
\begin{equation}\label{W1}
 f_j^wB^{Wick} = f_j^w({\delta}_0^wB_0^{Wick} + c^w + r^w) =
 f_j^w{\delta}_0^wB_0^{Wick} + a_j^wr_j^w \quad\text{modulo $\op S(m,g^\sh)$}
\end{equation}
where $r \in S(H^{1/2}\w{{\delta}_0}, g^\sh)$ which gives $r_j^w =
({\alpha}_0^{1/2}{\phi}_j)^w r^w \in \op
S(M^{1/2}H^{3/4}\w{{\delta}_0}, g^\sh)$. If $\re A = \frac{1}{2}(A +
A^*)$, $\im A = \frac{1}{2i}(A
-A^*)$ and $B^* = B$ then
\begin{equation*}
 \re (AB) = \re (\re A)B + i [\im A, B]/2
\end{equation*}
By taking $A = f_j^w{\delta}_0^w$ and $B = B_0^{Wick}$ we find from
~\eqref{nycalcref2} that  
\begin{equation}\label{W2}
 \re \left(f_j^w{\delta}_0^wB_0^{Wick}\right)  = \re(a_j^wa_j^wB_0^{Wick})
\qquad\text{modulo $\op S(m,g^\sh)$}
\end{equation}
In fact, $B_0 = 1 + {\chi}_0{\varrho}_0/{\delta}_0$ and
$({\chi}_0{\varrho}_0/{\delta}_0)^{Wick} \in \op
S(H^{1/2}\w{{\delta}_0}, g^\sh)$ by Proposition~\ref{propwick}, thus
$$
[a^w,B_0^w] = [a^w,
({\chi}_0{\varrho}_0/{\delta}_0)^{Wick}]
\in \op S(M H^{3/2}\w{{\delta}_0}, g^\sh)
$$
when $a \in S(MH^{1/2}, G)$.
Similarly, we find from ~\eqref{ajdef} that
\begin{equation}\label{W3}
a_j^wa_j^wB_0^{Wick} =
 a_j^w(B_0^{Wick}a_j^w + s_j^w) \qquad\text{modulo $\op S(m,g^\sh)$}
\end{equation}
where $s_j = [a_j^w, B_0^{Wick}] \in S(M^{1/2}H^{3/4}\w{{\delta}_0},
g^\sh)$. Next, we shall use an argument by Lerner ~\cite{ln:cutloss}. 
Since $B_0 \ge 1/4$ by~\eqref{b0ref} we find from 
\eqref{W1}--\eqref{W3} that
\begin{equation}\label{preplow}
 \re f_j^w B^{Wick} \ge \frac{1}{4}a_j^wa_j^w + \re a_j^wS_j^w
 \qquad\text{in $L^2$ modulo $\op S(m,g^\sh)$} 
\end{equation}
where $S_j = r_j + s_j \in 
S(M^{1/2}H^{3/4}\w{{\delta}_0}, g^\sh)$.
Then by completing the square, we find
\begin{equation}\label{compsq}
 \re f_j^w B^{Wick} \ge  \frac{1}{4}\left(a_j^w + 2S_j^w\right)^*
\left(a_j^w + 2S_j^w\right)
 \ge 0 \quad\text{in $L^2$ modulo $\op S(m,g^\sh)$}
\end{equation}
since $(S_j^w)^*S_j^w = \ol S_j^wS_j^w \in
\op S(MH^{3/2}\w{{\delta}_0}^2, g^\sh)$.

But we must also consider $\re F^w_{0,j}B^{Wick}$, where  $F_0$
satisfies ~\eqref{subsymbdef} so
\begin{equation} \label{lowterm}
F_{0,j} =
{\Psi}_jF_0 \in S(MH^{1/2}h^{1/2}, G)
\end{equation}
We shall prove that
\begin{equation}\label{lowerterm}
\re F^w_{0,j}B^{Wick} =  \re a_j^{w}R_j^w\qquad\text{modulo $\op
  S(m,g^\sh)$}
\end{equation}
where $R_j \in S(M^{1/2}H^{3/4}, g^\sh)$, which can then be
included in the term given by $S_j$ in~\eqref{preplow}. Since $b =
{\delta}_0 \in S(H^{-1/2}, G)$
modulo $S(H^{1/2}\w{{\delta}_0}^2, g^\sh)$ in $\omega_j$ by
Proposition~\ref{wickweyl} we find 
\begin{equation*}
 \re F^w_{0,j}B^{Wick} = \re F^w_{0,j}b^w =(\re F_{0,j}{\delta}_0)^w
\end{equation*}
modulo $\op S(m,g^\sh)$. We find from~\eqref{ajdef}
and~\eqref{lowterm} that  $\im a_j = 0$, so 
\begin{equation*}
\re F_{0,j}{\delta}_0 = \re {\phi}_j^2 F_0{\delta}_0 = a_jR_j
\end{equation*}
where 
$$R_j = \re {\phi}_jF_0/{\alpha}_0^{1/2}  \in
S(M^{1/2}H^{1/4}h^{1/2}, G) \subseteq  S(M^{1/2}H^{3/4},G)
$$ 
This gives $(\re F_{0,j}{\delta}_0)^w = a_j^{w}R_j^w$
modulo $\op S(MHh^{1/2}, G) \subseteq \op S(m,g^\sh)$, so we
obtain~\eqref{lowerterm}. 
By adding ~$R_j$ to $S_j$ in~\eqref{preplow} and
completing the square as in ~\eqref{compsq}, we obtain
~\eqref{lowerest} in this case.  
This completes the proof of Proposition~\ref{lowersign}.
\end{proof}

\begin{rem}\label{finalrem}
It follows from the proof of Proposition~\ref{lowersign} 
that in order to obtain the estimate~\eqref{lowerbound} it suffices
that the lower order term  $F_0 \in S(MH, g^\sh) \subseteq S(1,g^\sh)$.
\end{rem}

\bibliographystyle{amsplain}

\providecommand{\bysame}{\leavevmode\hbox to3em{\hrulefill}\thinspace}

\end{document}